\documentclass{elsarticle}
\pdfoutput=1
\usepackage{latexsym, amssymb, enumerate, amsmath,subcaption}
\usepackage{url}
\usepackage{graphics,graphicx,subfig,color}
\usepackage{bm}
\usepackage[numbers]{natbib}

\newcommand{\kone}[1]{{ #1}}
\newcommand{\ktwo}[1]{{ #1}}
\sloppy

\thinmuskip = 0.5\thinmuskip \medmuskip = 0.5\medmuskip
\thickmuskip = 0.5\thickmuskip \arraycolsep = 0.3\arraycolsep

\numberwithin{equation}{section}
\newtheorem{theorem}{Theorem}[section]
\newtheorem{lemma}[theorem]{Lemma}

\newtheorem{proposition}[theorem]{Proposition}
\newtheorem{remark}[theorem]{Remark}

\newproof{proof}{Proof}

\newcommand{\eps}{{\varepsilon}}
\renewcommand{\d}{\mathrm{d}}
\newcommand{\D}{\mathrm{D}}

\renewcommand*{\d}{{\mathrm d}}

\renewcommand*{\Xi}{{\boldsymbol{\xi}}}

\renewcommand*{\O}{{\mathcal{O}}}

\newcommand*{\R}{{\mathbb{R}}}

\newcommand{\LE}{{\mathcal{L}}_G}

\newcommand{\dt}[0]{{\delta t}}
\newcommand{\Dt}[0]{{\Delta t}}
\newcommand{\td}[0]{t_\Delta}
\newcommand{\gt}[0]{\tilde{g}}
\newcommand{\Gt}[0]{\tilde{G}}

\newcommand{\twoparts}[4]
{
	\left\{
		\begin{array}{lll}
			&\displaystyle#1 &\quad \textnormal{for } \displaystyle#2 \vspace{0pt}\\ 
			&{\displaystyle#3 }& {\quad \textnormal{for } \displaystyle#4 }
		\end{array}
	\right.
}
\newcommand{\uR}[1]{\uppercase\expandafter{\romannumeral#1}}

\allowdisplaybreaks


\begin{document}

\title{On convergence of higher order schemes for the projective integration method for stiff ordinary differential equations}
\author{John~Maclean\corref{cor1}}
\ead{j.maclean@maths.usyd.edu.au}

\author{Georg~A.~Gottwald\corref{cor2}}
\ead{georg.gottwald@sydney.edu.au}

\cortext[cor1]{Corresponding author}

\address{School of Mathematics and Statistics, University of Sydney, NSW 2006. Australia.}


          

\pagestyle{myheadings}\markboth{On convergence of the projective integration method}{John Maclean and Georg A. Gottwald}

\begin{abstract}
We present a convergence proof for higher order implementations of the projective integration method (PI) for a class of deterministic multi-scale systems in which fast variables quickly settle on a slow manifold. The error is shown to contain contributions associated with the length of the microsolver, the numerical accuracy of the macrosolver and the distance from the slow manifold caused by the combined effect of micro- and macrosolvers, respectively. 
We also provide stability conditions for the PI methods under which the fast variables will not diverge from the slow manifold. We corroborate our results by numerical simulations. 
\end{abstract}

\begin{keyword}
multi-scale integrators \sep projective integration \sep error analysis
\MSC[2010] 65LXX \sep 65PXX \sep 34E13 \sep 37MXX 
\end{keyword}
\maketitle
\section{Introduction} Many problems in the natural sciences are modelled by multidimensional ordinary differential equations with entangled processes running on widely separated time scales. 
One is often interested in resolving the behaviour of the slow processes over a long, macro time scale. However, the fast processes prevent direct solution of the system by traditional numerical methods. 
Recently two numerical methods designed to overcome the restriction to the small integration time step in these stiff dynamical systems have been much studied; the \emph{projective integration method} within the equation-free framework and the \emph{heterogeneous multiscale methods} (HMM). Each method exists in multiple formulations; in the PI method, we mention \cite{KevrekidisGearEtAl03,GearKevrekidis03,HummerKevrekidis03,GearLee05,LustRooseVandekerckhove06,GivonEtAl06,KevrekidisSamaey09,LafitteSamaey10}, and in the HMM, \cite{EEngquist03,VandenEijnden03,E03,EngquistTsai05,ELiuVandenEijnden05,EEtAl07,Liu10}. There is some debate on the similarities and differences between the methods; the interested reader is referred to \cite{VandenEijnden07,EVandenEijnden08} for a discussion. \\
Both methods assume that the fast variables in the full multiscale system quickly relax to a slow manifold, after which the dynamics of the slow variables is governed by a slow reduced system. Both methods estimate the effective influence of the fast variables on the dynamics of the slow variables by employing a \emph{microsolver} to perform short fine-scale computations with small time steps (microsteps). This information is used to propagate the dynamics on the slow manifold for large time steps (macrosteps) in the \emph{macrosolver}.\\
The philosophy behind each method is slightly different. The PI approach estimates the effective slow vector field via direct numerical evaluation, not assuming any knowledge on the form of the reduced vector field; this forms part of the equation-free approach. In contrast, the HMM philosophy utilises a priori analytical knowledge about the reduced vector field. \\

\noindent
In this paper, we focus on numerical methods that are \emph{seamless}; that is, the numerical methods do not explicitly separate the slow variables and the fast variables at any stage in the solver, but instead propagate all variables simultaneously. These methods are useful in systems where conceptually there exists a decomposition or transformation of the system into slow and fast variables, but where this transformation is unknown. 
The added complication of seamless numerical methods is that the fast variables are propagated simultaneously with the slow variables with the large time step of the macrosolver. This may lead to a more severe departure of the fast variables from the slow manifold over the macrosteps in comparison to nonseamless methods. \\

\noindent
\ktwo{In first order PI methods the micro- and macrosolver are applied sequentially, so the error accrued by the micro- and macrosolver can be analysed separately, as for example in \cite{E03,GottwaldMaclean13}. There are two different approaches to extend PI to higher order solvers. First, one can still apply the micro- and macrosolver sequentially, as in \cite{GearEtAl02,EngquistTsai05,LustRooseVandekerckhove06,RooseVandekerckhove06}. The analysis in \cite{EngquistTsai05,RooseVandekerckhove06} shows that such schemes can be accurate to second order in the size of the macrosolver. Alternatively, one can apply the microsolver multiple times during each time step of the macrosolver, as in \cite{E03, GearLee05, LafitteEtAl14}. The numerical schemes that we will consider take this approach. The analysis of such methods is complicated by the requirement that the errors accrued by the micro- and macrosolvers, which are intertwined due to the nonlinear nature of the dynamics, have to be estimated simultaneously. 
In \cite{E03}, an error bound is proposed for a seamless HMM scheme of arbitrary order, albeit without proof. In \cite{GearLee05, LustRooseVandekerckhove06, RooseVandekerckhove06}, second order PI schemes are proposed and analysed. In \cite{LafitteEtAl14}, error bounds for the slow variables and stability conditions are derived for an arbitrary order Runge-Kutta macrosolver applied to a kinetic equation with linear relaxation.}\\

\noindent
In this paper we present a higher order seamless multiscale method as considered in \cite{E03,GearLee05}, \ktwo{for a system of nonlinear stiff ordinary differential equations}. We propose a slight modification of this method which, involving an additional application of the microsolver, constructs slow vector fields pointing towards the slow manifold. Both schemes reduce to Runge-Kutta methods if the microsolver is switched off. We establish rigorous convergence results for the slow variables of these methods. We find that both methods incur error terms propotional to the order of the macrosolver, the distance of the fast variables from the slow manifold, and an additional term due to the microsolver, independent of the order of the microsolver. \ktwo{This result confirms for the two methods we consider the error bound suggested in \cite{E03}.} Furthermore, we find that the error due to the microsolver is smaller in our proposed method when both methods are employed at the same computational cost.\\ \ktwo{A known problem in seamless methods is that the macrosolver may lead to a departure of the fast variables from the slow manifold. To combat this divergence of the fast variables, several methods have been introduced \cite{GearEtAl05, ZagarisEtAl09, VandekerckhoveEtAl11,ZagarisEtAl12}; analytical bounds on the departure of the fast variables from the slow manifold over a macrostep have received relatively little attention (with the notable exception of \cite{EngquistTsai05}). Estimates of the maximal deviation of the fast variables from the slow manifold are particularly important when bifurcations occur or when the dynamics transits to different solution branches (e.g. \cite{GearEtAl02,KevrekidisGearEtAl03,KevrekidisSamaey09,SiettosEtAl12}); if the departure from the slow manifold is too large, the transitions may be premature.}\\ We establish bounds on the departure of the fast variables from the slow manifold over the macrosolver.  The bounds show that the numerically induced departure of the fast variables from the slow manifold scales one order better in the macrostep size in our modified version of PI. Furthermore, these bounds allow us to derive stability conditions for both methods under which the departure of the fast variables from the slow manifold remains finite over the macrosteps.\\


\noindent
The paper is organized as follows. In Section~\ref{sec:model} we discuss the class of dynamical systems studied, and briefly summarize in Section~\ref{sec:rk} classic Runge-Kutta methods for these systems. We then present two multiscale methods which enable the solution of these systems with macro length time steps in Section~\ref{sec:methods}. In Section~\ref{sec:proof}, the main part of this work, we derive rigorous error bounds for those numerical multiscale methods. In Section~\ref{sec:numerics} we present results from numerical simulations corroborating our analytical findings. We conclude with a discussion in Section~\ref{sec:summary}.


\section{Model}
\label{sec:model}

We consider deterministic multiscale systems of the form
\begin{align}
\label{baseneo} \dot z_\eps &= \mathcal{F}(z_\eps,\, \eps) \;\; ,
\end{align}
with $z_\eps \in \R^{n+m}$ and time scale separation parameter $0<\varepsilon \ll 1$. We assume there is a (possibly unknown) decomposition $z_\eps = (x_\eps, y_\eps)$ into fast variables $x_\eps\in\R^m$ and slow variables $y_\eps\in\R^n$ which evolve according to
\begin{align}
\dot y_\eps &=g(x_\eps,y_\eps)\; ,
\label{baseslow}
\\
\label{basefast.f} \dot x_\eps &= \frac{1}{\eps} f(x_\eps,y_\eps) \;\; .
\end{align}
We consider here the particular fast vector fields of the form
\begin{align}
f(x_\eps,y_\eps) &=\frac{\Lambda}{\varepsilon}(-  \, x_\eps+ h_0(y_\eps))\; .
\label{basefast}
\end{align}
We assume there is a coordinate system such that the matrix $\Lambda\in \R^{m\times m}$ is diagonal with diagonal entries $\lambda_{ii}>0$. We further allow for a scaling of time such that $\min(\lambda_{ii}) =1 $ and define $\max(\lambda_{ii})=\lambda$. 
We assume that there exists a slow manifold $x=h_\eps(y) = h_0(y) + \mathcal{O}(\eps)$, towards which initial conditions are attracted exponentially fast. On the slow manifold, the dynamics slows down and is approximately determined by
\begin{align}
\dot Y = G(Y)\; ,
\label{e.CMT}
\end{align}
with $Y=y_\eps+{\mathcal{O}}(\eps)$ and reduced slow vectorfield
\begin{align}
\label{e.G0} G(y) &= g(h_\eps(y),y) \; .
\end{align}

\section{Runge-Kutta Solvers}
\label{sec:rk}
We denote by $z_\eps(t^n)$ the solution of \eqref{baseneo} evaluated at the discrete time $t^n=n\Dt$, and by $\bar{z}^n$ the numerical approximation of $z_\eps(t^n)$ given by a Runge-Kutta solver of order P. 
Runge-Kutta solvers form approximations to the dynamics in terms of \emph{increments}. \ktwo{For simplicity, we restrict our analysis to Runge-Kutta methods in which increments are given recursively by}
\begin{align}
\label{rk.k}\bar{k}_j(\bar{z}^n) = \Dt \, \mathcal{F}(\bar{z}^n + a_j \bar{k}_{j-1},\eps)\;\; , 
\end{align}
for $j=1,2,\dots,P$. The values of the nodes $a_j$ depend on the order P (see for instance \cite{Iserles}), and satisfy $0\le a_j \le 1$, with $a_1=0$ so that the first increment is defined explicitly. Each increment $\bar{k}_j$ evaluates the vector field $\mathcal{F}$ of \eqref{baseneo} at the intermediate time $t^n + a_j\Dt$. The increments are averaged to define $\bar{z}^{n+1}$, with
\begin{align}
\label{rk.m} \bar{z}^{n+1} &= \bar{z}^n + \sum_{j=1}^P b_j \bar{k}_j(\bar{z}^n) \;\; ,
\end{align}
where the weights $b_j$ satisfy the condition $\textstyle\sum_{j=1}^P b_j = 1$, and depend on the order P and the particular choice of nodes $a_j$.
For instance, for $P=4$, the widely used fourth-order Runge-Kutta scheme, the nodes and weights may be given by
\begin{align}
\label{nodes} a_j &= \left\{ 0, \, \frac{1}{2},\, \frac{1}{2},\, 1\right\} \;\;, \\
\label{weights} b_j &= \left\{ \frac{1}{6}, \,\frac{1}{3},\, \frac{1}{3},\, \frac{1}{6} \right\} \;\; .
\end{align}

\noindent
For any P, the nodes and weights are determined such that the application of a single Runge-Kutta step of order P to a system with initial condition $\bar{z}_\eps(t)$ and time step $\Dt$ produces an approximation to $\bar{z}_\eps(t+\Dt)$ accurate to within $\mathcal{O}(\Dt^{P+1})$; see for instance \cite{Iserles}. In particular for linear systems 
 \begin{align*}
 \dot{x}_\eps &= -\frac{\Lambda}{\eps} x_\eps  \;\; ,
 \end{align*}
for which 
  \begin{align*}
  \frac{\d^j x_\eps}{\d t^j} &=  \left(-\frac{\Lambda}{\eps}\right)^j \;\; ,
  \end{align*}
  a single Runge-Kutta step of $P$-th order can be written as
\begin{align*}
x^{n+1} &= \rho\left(-\frac{\Lambda \Dt}{\eps}\right) x^n \;\;,
\end{align*}
where the \emph{linear amplification factor} $\rho$ is given by the Taylor polynomial to order P of an exponential function
\begin{align}
\label{linamp} \rho(\eta) & = \sum_{j=0}^P \frac{(\eta)^j}{j!} \;\; .
\end{align}

\noindent
A straightforward implementation of Runge-Kutta methods to simulate stiff dynamical systems such as \eqref{baseslow}--\eqref{basefast} would be computationally too costly, as the time step is restricted to $\Dt \le \mathcal{O}(\eps)$ to ensure numerical stability.\\

\noindent
In the next section we present two numerical multiscale schemes which are designed to overcome the problem of stiffness presented above. These schemes employ a microsolver to relax the fast variables towards the slow manifold. Utilising the slowness of the dynamics on the slow manifold allows for the application of Runge-Kutta methods with large macro time steps $\Dt \gg \eps$.

\section{Numerical Multiscale Methods}
\label{sec:methods}


\noindent
We consider two seamless projective integration methods. The first is a general order formulation of PI as proposed in \cite{E03,GearLee05,EngquistTsai05}. 
We call this scheme PI1. 
The second is a modification of PI1, which employs information from the microsolver to define increments which point in the direction of the slow manifold, at the cost of one additional application of the microsolver\footnote{We ensure that the overall cost of PI1 and PI2 is the same when they are compared numerically by adjusting the total number of microsteps in each method (see Section~\ref{sec:numerics}).}. We call this method PI2. The PI1 and PI2 schemes differ in the definition of the increments. \\

\noindent
Denote by $z^n$ the numerical approximation given by the multiscale scheme to $z_\eps(t^n)$; using $z^n$ as the initial condition, both methods employ a microsolver with small microstep $\dt$, and then evaluate the vectorfield over a large macrostep $\Dt \gg \eps$. Iterating these steps enables one to construct increments which cover a macro time scale. The macrosolver then combines these increments in a weighted sum in Runge-Kutta fashion. \\
We denote by $\varphi^{m,\dt}$ the flow map for the microsolver run for $m$ microsteps with time step $\dt$ and assume that it describes an explicit numerical method of order p. We do not specify which particular numerical method is chosen; as we will see in Proposition~\ref{lemma_Gtwiddle}, increasing the order of the microsolver does not improve the predicted overall error scaling. \\
\noindent
In the following we detail PI1 and PI2 and highlight their differences. The procedures are illustrated in Figure~\ref{fig:PI1} for PI1 and in Figure~\ref{fig:PI2} for PI2.

\subsection{Projective Integration Scheme PI1}
We describe here a general order formulation of projective integration along the lines of \cite{E03,GearLee05,EngquistTsai05,GottwaldMaclean13}. We remark that this formulation is an instance where PI and HMM are essentially the same (see \cite{E03,VandenEijnden07}).
The scheme PI1 is a modified Runge-Kutta scheme in which the microsolver is employed to relax the fast variables close to the slow manifold before each increment is estimated.\\

We denote by $z^{n,j}_{m}$ the approximation of the fast and slow variables at the $m$-th microstep of the $j$-th increment at time step $n$, and denote by $M_j$ the integer number of microsteps taken before the $j$-th increment is estimated. We denote discrete times associated with microsolvers by subscripts and those associated with macrosolvers by superscripts.\\
%
The increments cover a time step of $\Dt$ and are given by evaluating $\mathcal{F}$ after an application of the microsolver, with
\begin{align}
\label{hatkz} \hat{k}_{j}(z^{n})& = \Dt \; \mathcal{F} (z^{n,j}_{M_j},\, \eps ) \;\; ,
\end{align}
for $j=1,2,\dots,P$, where we define $z^{n,j}_{m}$ for $j=1,2,\dots,P$, $m=1,2,\dots,M_j$, as the output of the microsolver
\begin{align}
\label{def:znjm} z^{n,j}_{m} &= \varphi^{m,\dt}\left(z^{n,j}_{0}\right) \;\; , 
\end{align}
with initial condition
\begin{align}
\label{def:znj0} z^{n,j}_{0} &= \twoparts{z^{n}}{j=1}
{z^{n,1}_{M_1} + a_j \hat{k}_{j-1}(z^n)}{j>1} \;\; .
\end{align}
The nodes $a_j$ are those used in the increments of a Runge-Kutta solver of order P; i.e for $P=4$, $a_j$ may be given by \eqref{nodes}. \ktwo{For more general Runge-Kutta solvers for PI methods, see \cite{LafitteEtAl14}}. Construction of the microsteps $z^{n,j}_{m}$ is illustrated in Figures~\ref{fig:PI1nM} and \ref{fig:k1hat_M}, and construction of the increments $\hat{k}_{j}$ in Figures~\ref{fig:k1hat} and \ref{fig:k2hat}.

\noindent The macrosolver is then given by the weighted sum
\begin{align}
\label{PI1macro} z^{n+1} &= z^{n,1}_{M_1} + \sum_{j=1}^P b_j \hat{k}_{j}(z^{n}) \;\; ,
\end{align}
%
where the weights $b_j$ are appropriate to a Runge-Kutta solver of order P; i.e for $P=4$, $b_j$ may be given by \eqref{weights}. The macrosolver is illustrated in Figure~\ref{fig:PI1sum}. Note that for $M_j=0$ for all $j$, i.e. without the microsolver, the scheme reduces to a standard Runge-Kutta solver of order P applied to the system \eqref{baseneo}. \ktwo{It is not true that PI schemes in general reduce to a numerical discretisation of the underlying multi-scale dynamical system if the microsolver is switched off (see for example \cite{RooseVandekerckhove06}).}

\noindent
For the analysis of the PI1 scheme, it is helpful to explicitly identify the slow and fast variables. We therefore decompose the PI1 variables $z^n$ into fast and slow components $(x^n,y^n)$, and $z^{n,j}_{M_j}$ into the fast and slow components $(x^{n,j}_{M_j},y^{n,j}_{M_j})$. Furthermore, we split the PI1 increments $\hat{k}_j$ into fast components $\hat{k}_{x,j}$ and slow components $\hat{k}_{y,j}$, with
\begin{align}
\label{hatky} \hat{k}_{y,j}(x^{n},y^{n})& = \Dt \; g\left({x}^{n,j}_{M_j},{y}^{n,j}_{M_j} \right) \;\;,\\
\label{hatkx} \hat{k}_{x,j}(x^{n},y^{n})& = \Dt \; f\left({x}^{n,j}_{M_j},{y}^{n,j}_{M_j},\eps \right) \;\; .
\end{align}
The macrosolver is then written as
\begin{align}
\nonumber y^{n+1} &= y^{n,1}_{M_1} + \sum_{j=1}^P b_j \hat{k}_{y,j}(x^{n},y^{n}) \;\; , \\
\label{PI1_macrox} x^{n+1} &= x^{n,1}_{M_1} + \sum_{j=1}^P b_j \hat{k}_{x,j}(x^{n},y^{n}) \;\; .
\end{align}
 


\subsection{Projective Integration Scheme PI2}
\label{sec:PI2}
We present here a modification of the PI1 scheme in which the increments are given by differences between endpoints of the microsolver. 
We again denote by $z^{n,j}_{m}$ the approximation of the fast and slow variables at the $m$-th microstep of the $j$-th increment.
The PI1 increments are given by \eqref{hatkz}, which we recall here as
\begin{align}
\label{2hatkz} \hat{k}_{j}(z^{n})& = \Dt \; \mathcal{F} (z^{n,j}_{M_j},\, \eps ) \;\; ,
\end{align}
for $j=1,2,\dots,P$, where $z^{n,j}_{m}$ is now defined for $j=1,2,\dots,P+1$, $m=1,2,\dots,M_j$, as the output of the microsolver
\begin{align}
\label{def:2znjm} z^{n,j}_{m} &= \varphi^{m,\dt}\left(z^{n,j}_{0}\right) \;\; , 
\end{align}
with initial condition
\begin{align}
\label{def:2znj0} z^{n,j}_{0} &= \twoparts{z^{n}}{j=1}
{z^{n,1}_{M_1} + a_j \hat{k}_{j-1}(z^n)}{j>1} \;\; .
\end{align}
Construction of the microsteps $z^{n,j}_{m}$ is illustrated in Figures~\ref{fig:PI2nM}, \ref{fig:k1} and \ref{fig:k2}. The PI2 increments are constructed by approximating the vector field $\mathcal{F}$ according to $\mathcal{F}(z^{n_j,M_j}) \approx (z^{n,j+1}_{M_{j+1}} -z^{n,1}_{M_1} )/(a_{j+1}\Dt)$, leading to
\begin{align}
\label{def:2kz} k_{j}(z^{n}) &= \frac{1}{a_{j+1}}\left(z^{n,j+1}_{M_{j+1}} -z^{n,1}_{M_1} \right) \;\;, 
\end{align}
for $j=1,2,\dots,P$. 
The nodes $a_j$ with $j=1,2,\dots,P$ are again those used in the increments of a Runge-Kutta solver of order P, and we set $a_{P+1}=1$. The construction of the PI2 increments ${k}_{j}$ is illustrated in Figures~\ref{fig:k1} and \ref{fig:k2}.

\noindent
Each PI2 increment covers a time step of $\Dt + {M_{j+1}\dt}/{a_{j+1}}$. We fix the total number of microsteps $M_j$ for $j>1$ with 
\begin{align}
\label{Mj} M_j = a_j M \;\;, 
\end{align}
for $j=1,2,\dots,P+1$ and for some $M$ satisfying $a_j M \in \mathbb{N}$, so that each increment covers a uniform time step of $\Dt + M\dt =: \td$. Note that \eqref{Mj} allocates more microsteps after larger increments and less after shorter increments.
 
\noindent
The macrosolver is now constructed as a weighted sum over the relaxed increments $k_j$ rather than over $\hat{k}_j$, with
\begin{align}
\label{macroz} z^{n+1} &= z^{n,1}_{M_1} + \sum_{j=1}^P b_j {k}_{j}(z^{n}) \;\; ,
\end{align}
where the weights $b_j$ again correspond to a Runge-Kutta solver of order P. Note that $t^n=n(\td+M_1\dt)$ for PI2. The macrosolver is illustrated in Figure~\ref{fig:PI2sum}.

\noindent

\noindent
Again for $M_1 =M=0$, i.e. without the microsolver, the PI2 scheme reduces to a standard Runge-Kutta method of order P. 

\noindent
As with the PI1 scheme, it is helpful to explicitly identify the slow and fast variables in the solver. We therefore decompose the PI2 variables $z^n$ into fast and slow components $(x^n,y^n)$, and $z^{n,j}_{M_j}$ into the fast and slow components $(x^{n,j}_{M_j},y^{n,j}_{M_j})$, and we split the PI2 increments ${k}_j$ into fast components ${k}_{x,j}$ and slow components ${k}_{y,j}$, with
\begin{align}
\label{def:ky} k_{y,j}(x^{n},y^{n}) &= \frac{1}{a_{j+1}}\left(y^{n,j+1}_{M_{j+1}} -y^{n,1}_{M_1} \right) \;\;, \\
\label{def:kx} k_{x,j}(x^{n},y^{n}) &= \frac{1}{a_{j+1}}\left(x^{n,j+1}_{M_{j+1}} - x^{n,1}_{M_1} \right) \;\; ,
\end{align}
depending via \eqref{def:2znj0} on the PI1 increments $\hat{k}_j$. For completeness we recall these as
\begin{align}
\label{2hatky} \hat{k}_{y,j}(x^{n},y^{n})& = \Dt \; g\left({x}^{n,j}_{M_j},{y}^{n,j}_{M_j} \right) \;\;,\\
\label{2hatkx} \hat{k}_{x,j}(x^{n},y^{n})& = \Dt \; f\left({x}^{n,j}_{M_j},{y}^{n,j}_{M_j},\eps \right) \;\; .
\end{align}
The macrosolver is then given by
\begin{align}
\nonumber y^{n+1} &= y^{n,1}_{M_1} + \sum_{j=1}^P b_j k_{y,j}(x^{n},y^{n}) \;\;  \\
\label{macrox} x^{n+1} &= x^{n,1}_{M_1} + \sum_{j=1}^P b_j k_{x,j}(x^{n},y^{n}) \;\; .
\end{align}
For ease of exposition we write the slow dynamics as
\begin{align*}
y^{n+1} &= y^n + \tilde{g}(x^n,y^n) \;\; , \\
\end{align*}
where the vector field of the slow variables in the PI2 macrosolver, $\tilde{g}$, is given by
\begin{align}
\label{def:gtwiddle} \tilde{g}(x^n,y^n) &= y^{n,1}_{M_1} - y^n + \sum_{j=1}^P b_j {k}_{y,j}(x^{n},y^{n}) \;\;.
\end{align}

\noindent
Comparing Figures~\ref{fig:PI1} and \ref{fig:PI2}, in the PI2 method the increments point in the approximate direction of the slow manifold, so that the macrostep initialises the fast variables close to the slow manifold after a macrostep. By comparison, the PI1 increments can depart from the slow manifold with larger scale separations or for initial conditions off the slow manifold. 

\begin{figure}[hb]
              \centering
        \begin{subfigure}[b]{.5\textwidth}
                \centering
\includegraphics[width=\textwidth]{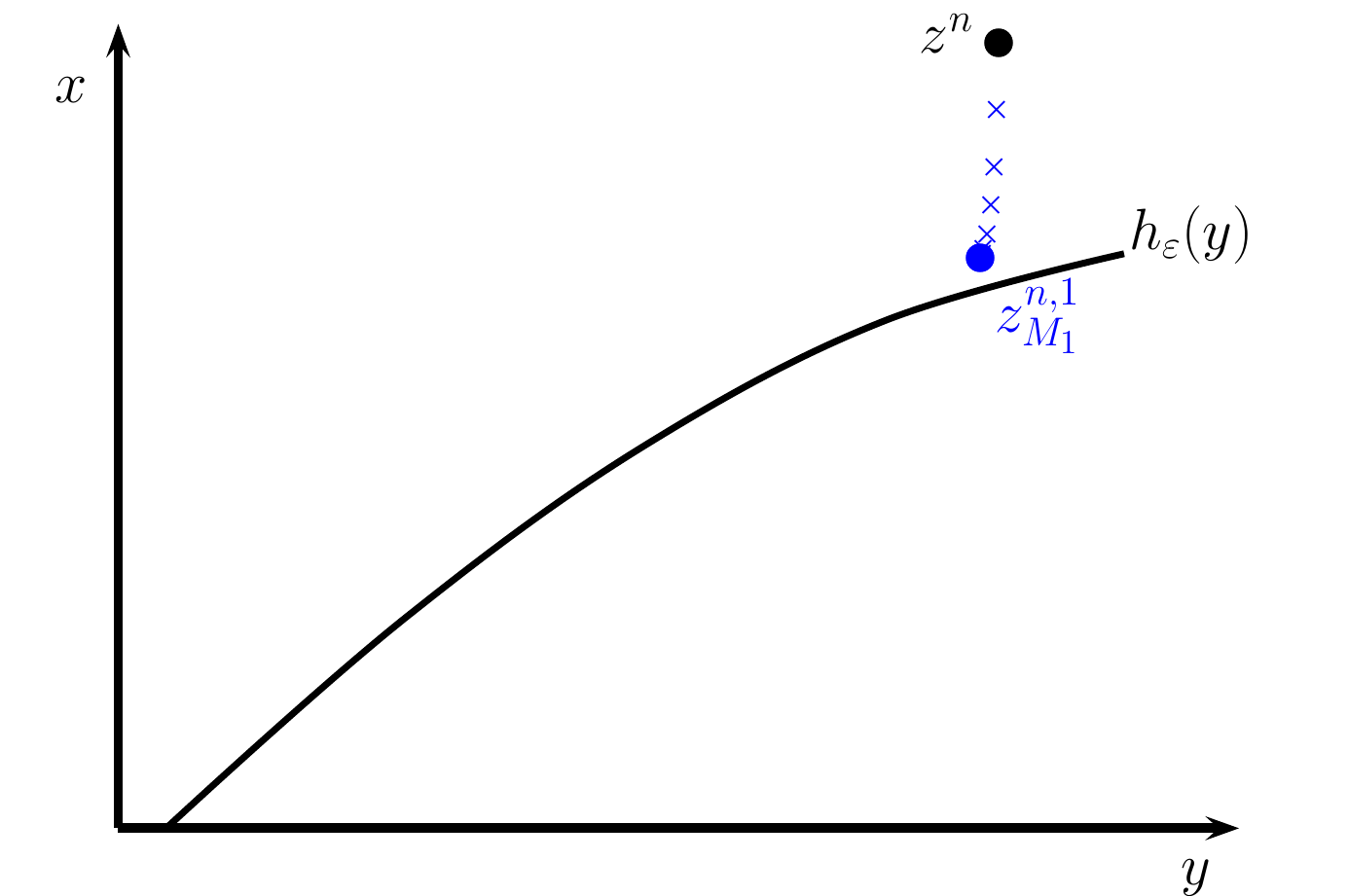}
\vspace{-20pt}
\caption{ }
\label{fig:PI1nM}
       \end{subfigure}%
        \begin{subfigure}[b]{.5\textwidth}
                \centering
\includegraphics[width=\textwidth]{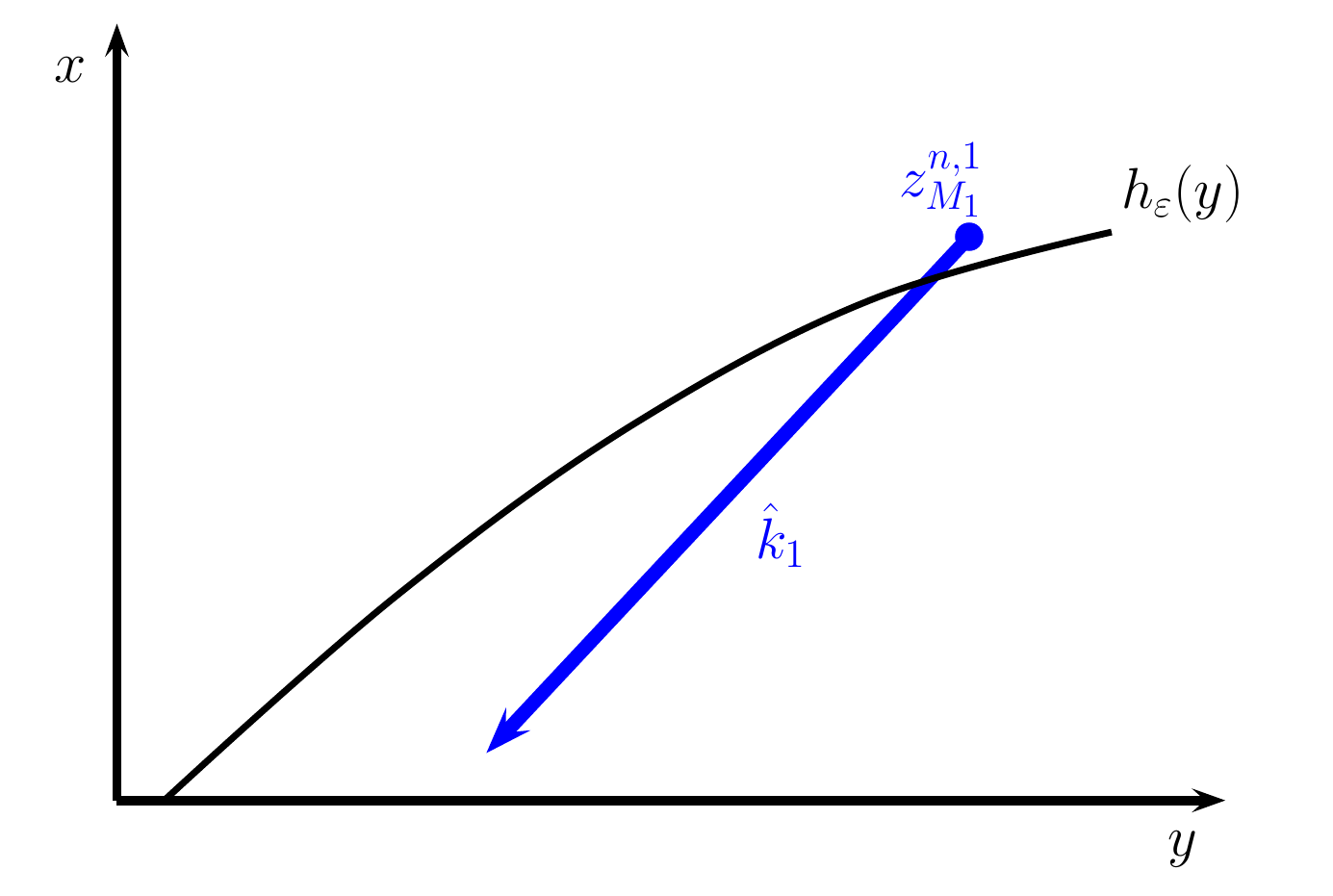}
\vspace{-20pt}
\caption{ }
\label{fig:k1hat}
        \end{subfigure}

        \begin{subfigure}[b]{.5\textwidth}
                \centering
\includegraphics[width=\textwidth]{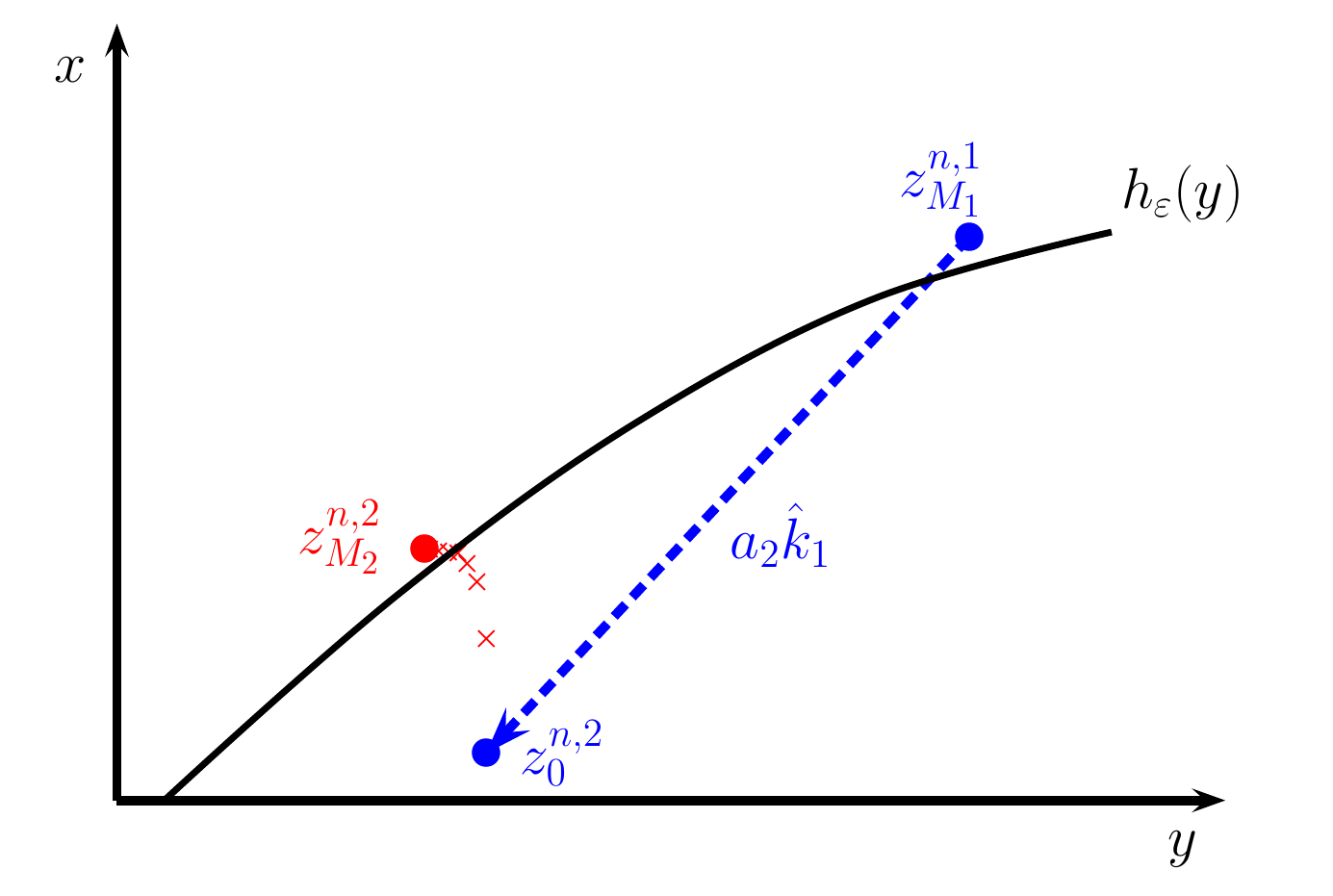}
\vspace{-20pt}
\caption{ }
\label{fig:k1hat_M}
        \end{subfigure}%
                \begin{subfigure}[b]{.5\textwidth}
                \centering
\includegraphics[width=\textwidth]{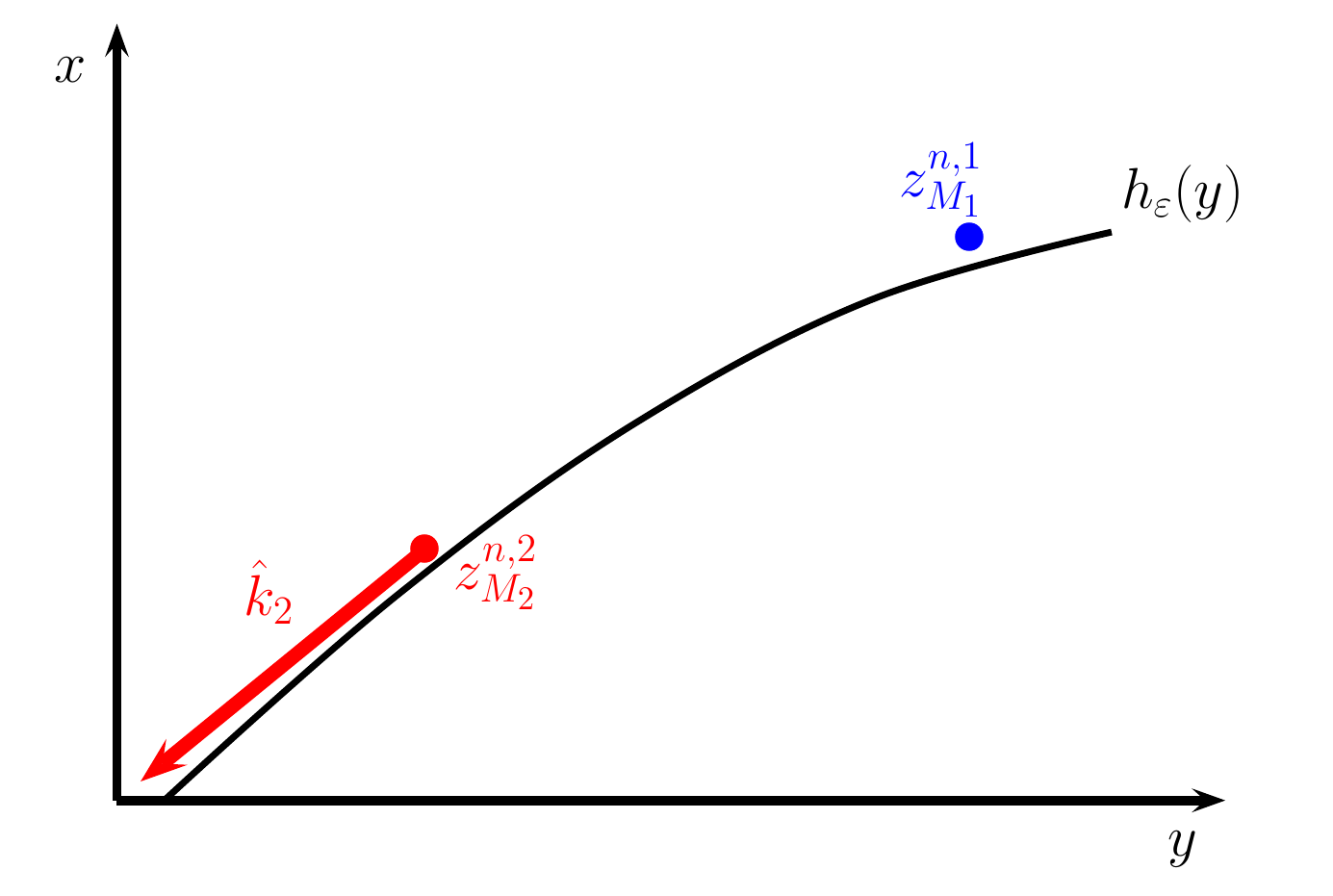}
  \vspace{-20pt}
 \caption{ }
  \label{fig:k2hat}
        \end{subfigure}

                \begin{subfigure}[b]{.5\textwidth}
                \centering
\includegraphics[width=\textwidth]{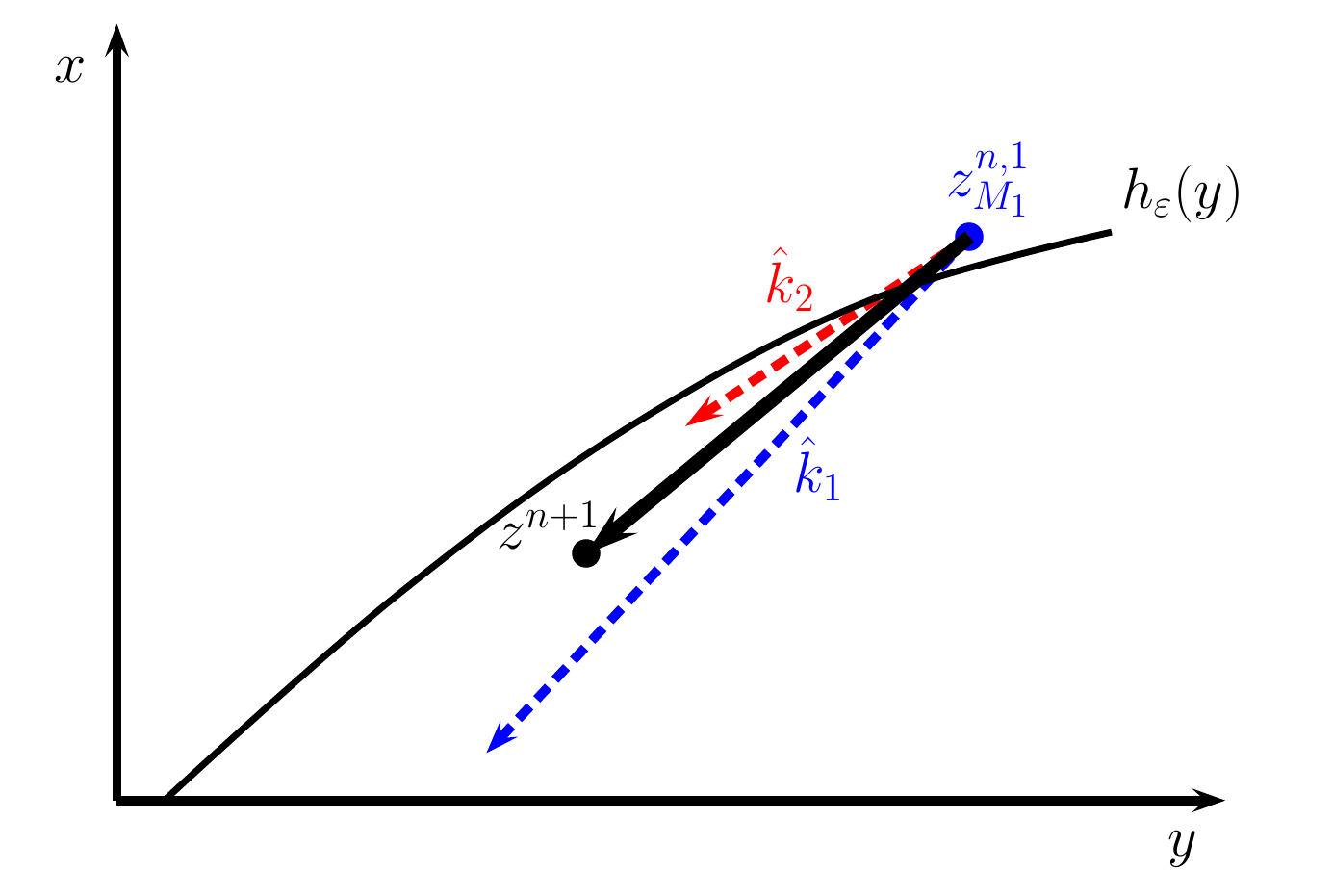}
 \vspace{-20pt}
  \caption{ }
  \label{fig:PI1sum}
        \end{subfigure}
        \caption{Sketch of the PI1 scheme for a second order Runge-Kutta macrosolver. The microsolver is employed in \ref{fig:PI1nM}, \ref{fig:k1hat_M}, the increments $\hat{k}_j$ are given by vectorfield evaluations in \ref{fig:k1hat}, \ref{fig:k2hat}, and the macrosolver is illustrated in \ref{fig:PI1sum}. For this scheme $a_1=0$, $a_2 = 1$ and $b_1=b_2={1}/{2}$.}
                 \label{fig:PI1}
\end{figure}

\begin{figure}[hb]
              \centering
        \begin{subfigure}[b]{.5\textwidth}
                \centering
\includegraphics[width=\textwidth]{fig_n_nM}
\vspace{-20pt}
\caption{ }
\label{fig:PI2nM}
       \end{subfigure}%
    \begin{subfigure}[b]{.5\textwidth}
                \centering
\includegraphics[width=\textwidth]{fig_k1hat}
\vspace{-20pt}
\caption{ }
\label{fig:k1hat.2}
        \end{subfigure}

        \begin{subfigure}[b]{.5\textwidth}
                \centering
\includegraphics[width=\textwidth]{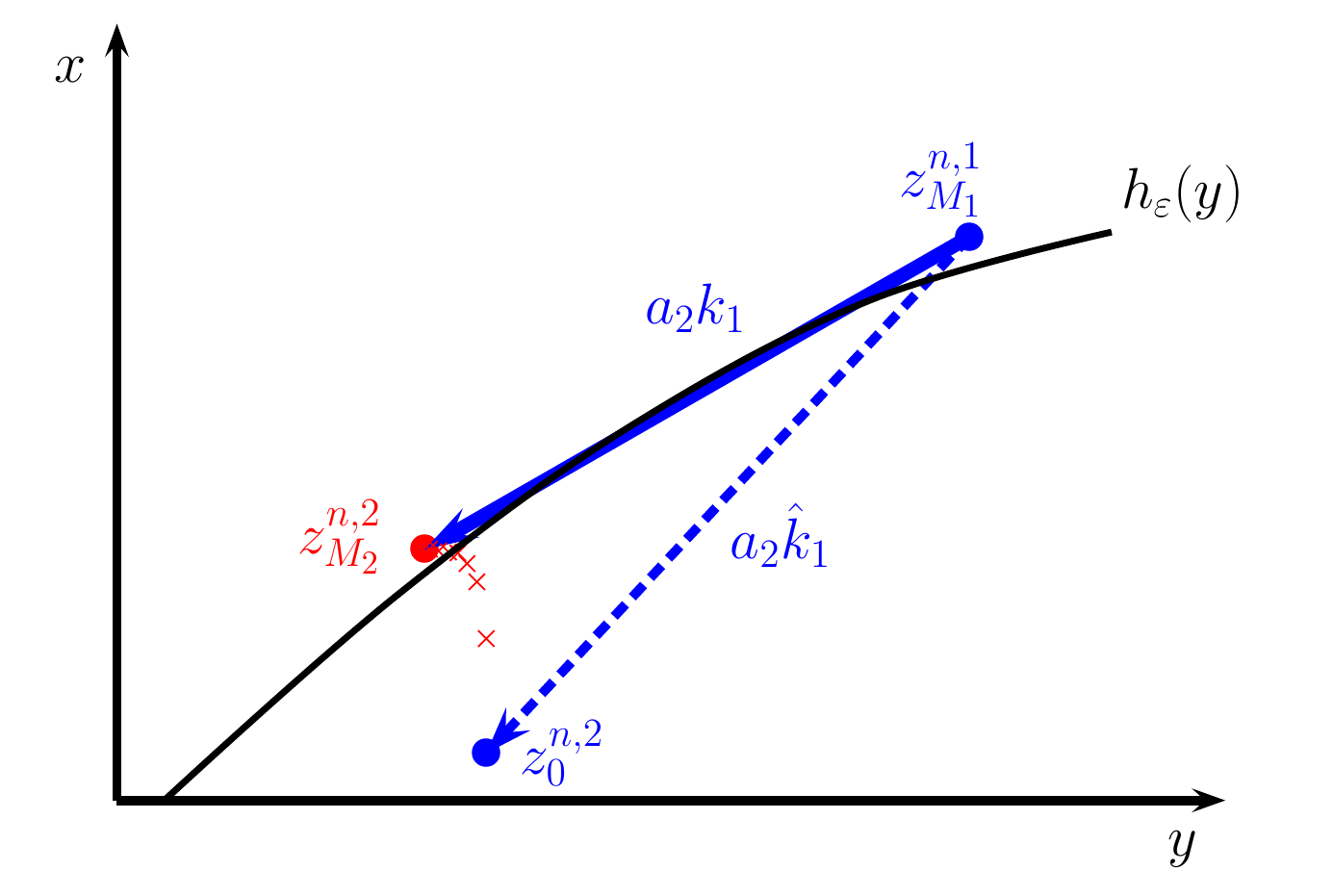}
\vspace{-20pt}
\caption{ }
\label{fig:k1}
        \end{subfigure}%
       \begin{subfigure}[b]{.5\textwidth}
                \centering
\includegraphics[width=\textwidth]{fig_k2hat}
  \vspace{-20pt}
 \caption{ }
  \label{fig:k2hat.2}
        \end{subfigure}
        
                \begin{subfigure}[b]{.5\textwidth}
                \centering
\includegraphics[width=\textwidth]{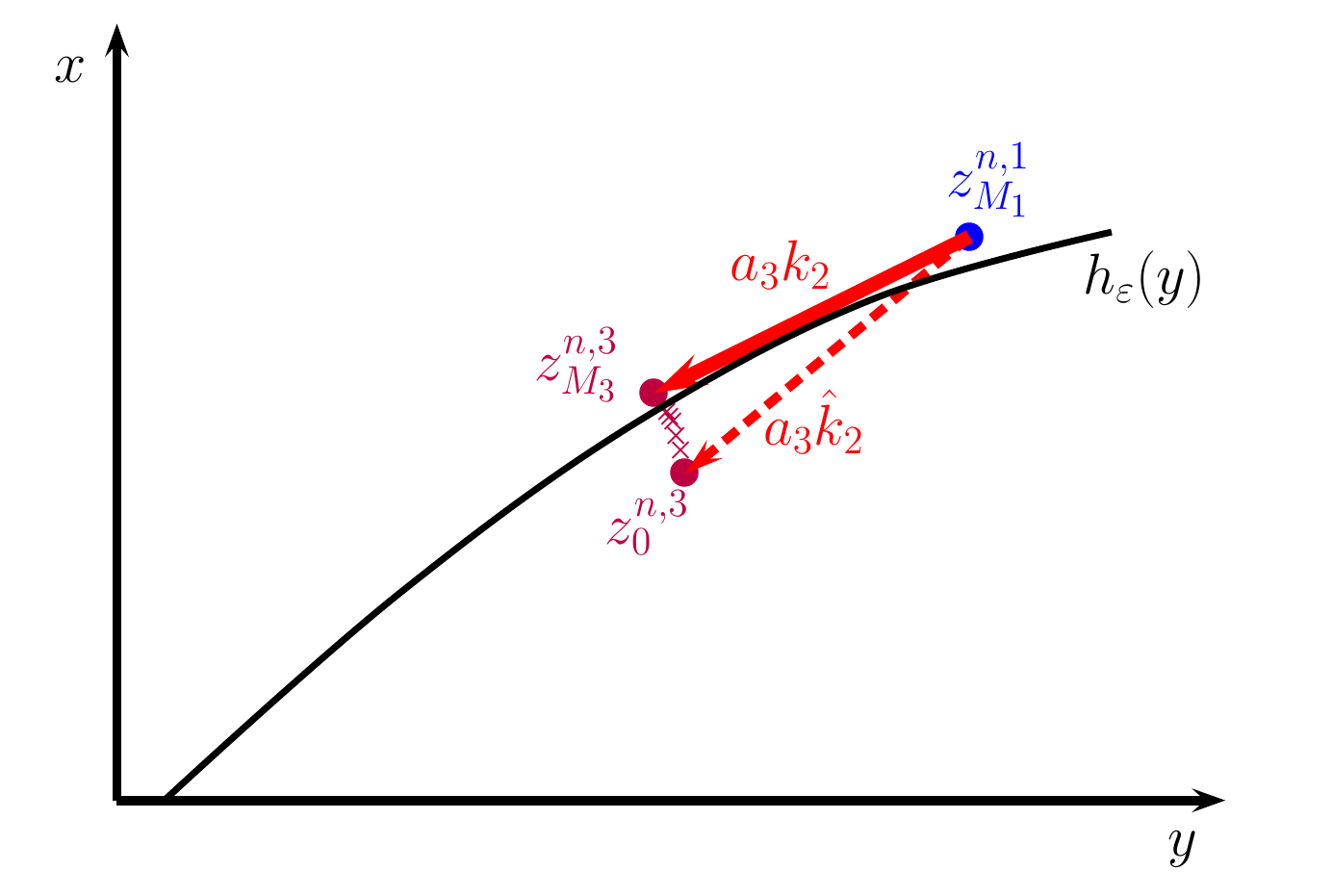}
  \vspace{-20pt}
 \caption{ }
  \label{fig:k2}
        \end{subfigure}%
                \begin{subfigure}[b]{.5\textwidth}
                \centering
\includegraphics[width=\textwidth]{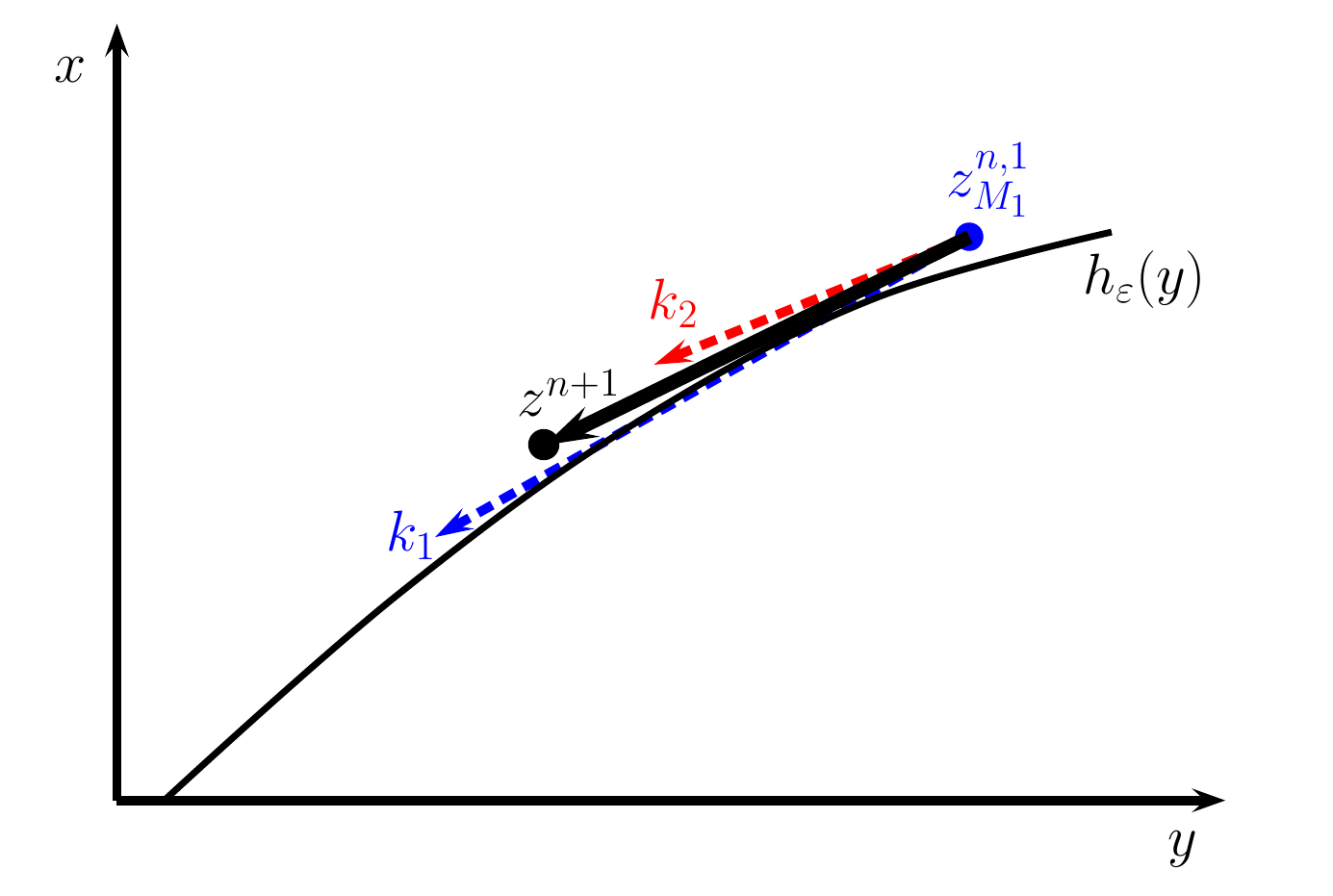}
\vspace{-20pt}
   \caption{ }
  \label{fig:PI2sum}
        \end{subfigure}
        \caption{Sketch of the PI2 scheme for a second order Runge-Kutta macrosolver. The microsolver is employed in \ref{fig:PI2nM}, \ref{fig:k1}, \ref{fig:k2} and the (now auxiliary) quantities $\hat{k}_j$ are given by vector field evaluations in \ref{fig:k1hat.2}, \ref{fig:k2hat.2}. The increments $k_j$ are estimated in \ref{fig:k1}, \ref{fig:k2}, and the macrosolver is illustrated in \ref{fig:PI2sum}. For this scheme $a_1=0$, $a_2 = a_3 = 1$ and $b_1=b_2={1}/{2}$.}
                 \label{fig:PI2}
\end{figure}

\section{Error analysis for Projective Integration}
\label{sec:proof}

We provide rigorous error bounds for the slow variables of PI in the formulations PI1 (\ref{hatkz})--(\ref{PI1macro}) and PI2 (\ref{def:2znjm})--(\ref{macroz}), following the general line of proof used in \cite{E03}. Therein the result for PI1 was stated, albeit without explicit proof. Furthermore, we establish bounds on the departure of the fast variables from the slow manifold over the macrosolver, yielding stability conditions for the fast variables.\\

\noindent
Throughout this work we assume the following conditions on the growth and smoothness of solutions of our system and on the numerical discretization parameters of PI.\\
%

\noindent
{\bf Assumptions}
\vspace{-4mm}
\begin{itemize}
\renewcommand{\labelitemi}{}
\item 
\renewcommand{\labelitemi}{\it A1:}
 \item The zeroth order approximation of the slow manifold $h_0(y)$ is Lipschitz continuous; that is there exists a constant~$L_h$ such that 
\begin{align*}
|h_0(y_1)-h_0(y_2)| \leq L_h|y_1 - y_2|\; .
\end{align*}
\renewcommand{\labelitemi}{\it A2:}
\item The vectorfield $g(x,y)$ is Lipschitz continuous; that is there exists a constant~$L_g$ such that 
\begin{align*}
|g(x_1,y_1)-g(x_2,y_2)| \leq L_g(|x_1-x_2|+|y_1-y_2|)\; .
\end{align*}
\renewcommand{\labelitemi}{\it A3:}
\item The second order derivatives of $h_0$ are all bounded; that is there exists a constant~$L_{h'}$ such that 
\begin{align*}
\sup_{}\Big|\sum_{|\alpha|=2}{\partial^\alpha h_0(y)}{}\Big| \leq L_{h'}\; ,
\end{align*}
where we used multi-index notation.
\renewcommand{\labelitemi}{\it A4:}
\item The vectorfield $g(x,y)$ is bounded for all $x,y$; that is there exists a constant~$C_g$ such that 
\begin{align*}
C_g =  \sup|g(x,y)| \;.
\end{align*}
\renewcommand{\labelitemi}{\it A5:}
\item The reduced slow dynamics $Y(t)$ is of class $C^{\max(P,\,p)}$; that is there exist constants~$C_P^*$~and~$C_p^*$ such that 
\begin{align*}
C_P^* &= \sup\left|\frac{\d^P{Y}(t)}{\d t^P}\right| \;, \\
C_p^* &= \sup\left|\frac{\d^p{Y}(t)}{\d t^p}\right| \;, 
\end{align*}
and in particular there exists a constant $C_2^*$ satisfying
\begin{align*}
C_2^* = \sup|\ddot{Y}(t)| \;.
\end{align*}
\renewcommand{\labelitemi}{\it A6:}
\item The total time $\Dt$ of the macrostep is sufficiently short so that, employing the practical constraint $M\dt \le \Dt$,
\begin{align*}
 L_G M \dt \leq L_G \Dt < \frac{1}{2}\;.
\end{align*}
\end{itemize}

\begin{remark}
Assumptions ({\it A1})--({\it A2}) imply that the reduced slow dynamics \eqref{e.CMT} is also Lipschitz continuous and there exists a constant $L_G \le L_g (1 + L_h)$ such that
\begin{align*}
|G(Y_1)-G(Y_2)| \leq L_G|Y_1 - Y_2|\; .
\end{align*}
Assumption ({\it A4}) implies that the reduced slow dynamics is also bounded and there exists a constant $C_G \le C_g$ such that
\begin{align*}
C_G = \sup|G(Y)| \; .
\end{align*}
\end{remark}

\noindent
The global Lipschitz conditions can be relaxed to local Lipschitz conditions by the usual means.\\

\noindent
We will establish bounds for the error $\mathcal{E}^n$ between the PI1 and PI2 estimate $y^n$ and the solution of the full system $y_\eps(t^n)$,
\begin{align*}
{\mathcal{E}}^n = |y_\eps(t^n)-y^n|\; .
\end{align*}
\medskip
\begin{theorem}[Convergence]
\label{theorem.main}
Consider schemes PI1 and PI2 run with a Runge-Kutta method of order $P$ for the macrosolver and an explicit scheme of order $p$ for the microsolver. Given assumptions ({\it A1})--({\it A6}), there exists a constant $C$ such that  on a fixed time interval $T$, for each $n$ such that $n\td\le T$, the error between the PI1 and PI2 estimates and the exact solution of the full multiscale system (\ref{baseneo}) are bounded by
\begin{align*}
{\mathcal{E}}^n \le  C\left(
\td^P + M\dt + \varepsilon
+\left(\frac{\eps}{\td}+ \rho^{aM}\left(-\frac{\dt}{\varepsilon}\right) \right)|d^n_{\max}| 
\right) \;\; .
\end{align*}
Here $a = \min_{j>1}a_j$, $\rho$ is the linear amplification factor \eqref{linamp} for the microsolver of order p measuring the attraction of the fast variables to the slow manifold over a microstep, and $\left|d^n_{\max}\right| := \max_{\substack{0 \le i <n\;, \\ 1 \le k \le P+1}} |x^{i,k}_{0} - h_0(y^{i,k}_{0})| $ is the maximal deviation of the fast variables from the approximate slow manifold accrued over the integration time.\end{theorem}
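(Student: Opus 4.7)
The plan is to reduce the analysis to that of a standard Runge-Kutta method of order $P$ applied to the reduced slow dynamics $\dot Y = G(Y)$ from \eqref{e.CMT}, treating the microsolver as a relaxation that replaces $g(x,y)$ by something close to $G(y)$. Writing each PI slow macrostep in the form $y^{n+1} = y^n + \tilde g(x^n,y^n)$ as in \eqref{def:gtwiddle}, the triangle inequality splits $\mathcal{E}^n$ into three pieces: the gap $|y_\eps(t^n) - Y(t^n)|$, which is $\mathcal{O}(\eps)$ on the finite interval $T$ by classical slow-manifold theory; the local truncation error of a clean order-$P$ RK step applied to $\dot Y = G(Y)$, which is $\mathcal{O}(\td^{P+1})$ per step by (A5); and a PI-specific consistency error measuring how far $\tilde g$ is from $\td$ times the effective RK increment of $G$ at $y^n$.

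The central technical ingredient is a contraction estimate for the fast deviation $d^{n,j}_m := x^{n,j}_m - h_0(y^{n,j}_m)$. Because the fast equation \eqref{basefast} is linear in $x$ with rate matrix $\Lambda/\eps$ and $\min \lambda_{ii}=1$, the order-$p$ microsolver contracts $d$ by the linear amplification factor $\rho(-\dt/\eps)$ of \eqref{linamp} per microstep in the slowest direction, so after at least $aM$ microsteps
\begin{equation*}
|d^{n,j}_{M_j}| \;\le\; \rho^{aM}\!\left(-\tfrac{\dt}{\varepsilon}\right) |d^{n,j}_{0}| \;+\; \mathcal{O}(\eps),
\end{equation*}
where the $\mathcal{O}(\eps)$ accounts both for $h_\eps - h_0 = \mathcal{O}(\eps)$ and for the slow drift of $y$ across the microstep window (bounded via (A4)). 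A short induction using \eqref{def:2znj0}, the Lipschitz property (A1) of $h_0$, and the uniform boundedness of $\hat k_j$ (which is $\mathcal{O}(\td C_g + \eps)$) then shows $|d^{n,j}_0| \le |d^n_{\max}| + \mathcal{O}(\td + \eps)$ uniformly in the stage index $j$.

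Combining this with (A2) and the $\eps$-closeness of $h_\eps$ to $h_0$ gives, at each stage, $g(x^{n,j}_{M_j},y^{n,j}_{M_j}) = G(y^{n,j}_{M_j}) + \mathcal{O}(|d^{n,j}_{M_j}|) + \mathcal{O}(\eps)$. Averaging with the weights $b_j$, the PI increment reproduces the order-$P$ RK approximation of $G$ up to an additive consistency error per macrostep of size $\td\bigl(\rho^{aM}(-\dt/\eps)|d^n_{\max}| + \eps + C_g M\dt/\td\bigr)$. The extra $M\dt$ appears because each PI macrostep advances physical time by $\td = \Dt + M\dt$ while the RK weights are tuned to the shorter interval $\Dt$, and the $\eps/\td$ factor multiplying $|d^n_{\max}|$ emerges when the residual $\mathcal{O}(\eps)$ bias of the relaxed fast variables is amortised over a macrostep of length $\td$. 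For PI2 the same analysis applies via the representation \eqref{def:gtwiddle}; for PI1, the increments \eqref{hatkz} evaluate $\mathcal{F}$ directly at the relaxed point, and the consistency calculation is formally simpler but structurally identical.

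Finally, using the Remark to obtain $L_G \le L_g(1 + L_h)$, the global error $\mathcal{E}^n$ satisfies a discrete Gronwall inequality with amplification $(1 + L_G\td)^n$ over $n\td \le T$ steps; assumption (A6) ensures $L_G\td < 1/2$, so the summed local errors yield exactly the bound stated in the theorem. The main obstacle is the recursive tracking of $|d^{n,j}_0|$ through the intermediate RK stages, particularly for PI2 where each stage initial condition is itself a microsolver output via \eqref{def:2znj0}; one must verify that the induction bounding $|d^{n,j}_0|$ in terms of $|d^n_{\max}|$ does not accumulate a constant growing with $P$, which requires using (A3) to control second-order corrections to $h_0$ along the increment $a_j \hat k_{j-1}$ and using (A6) to keep the linear feedback between successive stages bounded by a geometric series.
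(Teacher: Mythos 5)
Your overall architecture (reduction error plus discretization error, the contraction factor $\rho(-\dt/\eps)$ for the fast deviation, a consistency comparison between the PI macrostep and a Runge--Kutta step for $\dot Y = G(Y)$, then a discrete Gronwall sum) is the same as the paper's. However, there is a genuine gap in your accounting of the microsolver contribution. You attribute to each macrostep a consistency error containing the full slow drift over the microsteps, $C_g M\dt$ per step ("the RK weights are tuned to the shorter interval $\Dt$"). Summing local errors of size $\td\bigl(\rho^{aM}(-\dt/\eps)|d^n_{\max}|+\eps+C_gM\dt/\td\bigr)$ over $n\approx T/\td$ steps with amplification $(1+L_G\td)^n$ gives a global contribution $\mathcal{O}(M\dt/\td)$, not the $\mathcal{O}(M\dt)$ claimed in the theorem; since typically $\td\ll 1$, your bound is weaker by a factor $1/\td$ and does not yield the stated estimate, contrary to your final assertion. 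The point you are missing is that the drift of the slow variables over the microsteps is genuine time advance, not error: the paper builds the microstep time into the grid ($t^n=n(\td+M_1\dt)$ for PI2, increments spanning $\td=\Dt+M\dt$) and compares the PI macrostep to the auxiliary scheme $\tilde G$ (PI applied to the reduced equation). With that bookkeeping, the microsolver-induced local error is only the stage-position mismatch, bounded by $\left|\bar K_j-K_j\right|\le 2C_2^*\,\td\,M\dt$ (Proposition~\ref{lemma_Gtwiddle}), i.e.\ $\mathcal{O}(\td\,M\dt)$ per step, which is what accumulates to the $M\dt$ term. Your sketch needs this refinement, not merely the heuristic time-interval mismatch.

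A secondary omission: your per-step consistency budget does not contain the $\mathcal{O}(\eps\,|d^n_{\max}|)$ contribution coming from the slow variables themselves being dragged off the reduced trajectory while the fast transient decays during the microsolver runs (in the paper this is the $2L_g\eps|d^{n,1}_0|$ term of Lemma~\ref{phi.bound}, entering $\tilde g-\tilde G$ through the base point $y^{n,1}_{M_1}-Y^{n,1}_{M_1}$). That term is precisely the origin of the $(\eps/\td)\,|d^n_{\max}|$ piece of the theorem; your replacement $g(x^{n,j}_{M_j},y^{n,j}_{M_j})=G(y^{n,j}_{M_j})+\mathcal{O}(|d^{n,j}_{M_j}|)+\mathcal{O}(\eps)$ only controls the increments at the (perturbed) stage points and does not account for the perturbation of the stage points themselves, so as written your error budget neither reproduces nor justifies dropping that term.
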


\medskip
It is worthwhile to briefly discuss the bound on $\mathcal{E}^n$. The term proportional to $\td^P$ reflects the convergence of the underlying Runge-Kutta numerical scheme of order P in the macrosolver. The term proportional to $M\dt$ is incurred by the drift of the slow variables over the microsteps before estimating the increments (regardless of the order p of the microsolver). The terms proportional to the time scale parameter $\varepsilon$ represent the error made by the reduction as well as an additional error incurred during the drift of the slow variable over the microsteps. The term proportional to $(\eps/\td + \rho^{aM}(-\dt/\varepsilon))|d^n_{\max}|$ measures the mismatch between the slow vector field $g(x,y)$  after an application of the microsolver and the reduced vector field $G(y)$.\\

\noindent
We also provide bounds on the deviation $|d^n|$ of the fast variables $x^n$ from the slow manifold $h_0(y^n)$ for PI1 and PI2, 
\begin{align*}
|d^n| = |x^n-h_0(y^n)|\; .
\end{align*}
\medskip
\begin{theorem}[Stability of the fast variables] 
\label{theorem.sec}
Consider schemes PI1 and PI2 run with a Runge-Kutta method of order $P$ for the macrosolver and a forward Euler scheme for the microsolver. Given assumptions {\it (A1)}, {\it (A3)} and {\it (A4)}, the fast variables do not diverge over the macrosolver, so that the largest deviation of the fast variables from the slow manifold $|d^n_{\max}|$ is finite, if 
\begin{align*}
\frac{\lambda \Dt}{\eps}\left(1-\frac{\dt}{\eps}\right)^{a M} < 1 \;\;.
\end{align*}
Then the distance of the fast variables from the slow manifold after the $n$-th macrostep
satisfies for the PI1 scheme the recurrence relation
\begin{align*}
|d^{n+1}| \le& \sum_{j=1}^P b_j \left( \frac{\lambda \Dt}{\eps} \left(1-\frac{\dt}{\varepsilon}\right)^{aM}\right)^j |d^{n}|  + L_h C_g (1+\lambda)\Dt \;\;, 
\end{align*}
and for the PI2 scheme
\begin{align*}
|d^{n+1}| \le &  \ \frac{\left(1-\frac{\dt}{\eps}\right)^{aM}}{a}\sum_{j=1}^P b_j \left(\frac{\lambda \Dt}{\eps} \left(1-\frac{\dt}{\eps}\right)^{aM}\right)^j |d^{n}|+2L_{h'} C_g^2 \td^2\;\;.
\end{align*}
\end{theorem}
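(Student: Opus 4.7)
The plan is to express $d^{n+1} = x^{n+1}-h_0(y^{n+1})$ directly from the macrosolver identities and reduce everything to inequalities in $|d^n|$ whose coefficients are expressed through the microsolver contraction factor. For PI1, starting from \eqref{PI1_macrox} and inserting $\pm h_0(y^{n,1}_{M_1})$, I would write
\begin{align*}
d^{n+1} = \bigl(x^{n,1}_{M_1}-h_0(y^{n,1}_{M_1})\bigr) + \sum_{j=1}^P b_j \hat{k}_{x,j} - \bigl(h_0(y^{n+1})-h_0(y^{n,1}_{M_1})\bigr).
\end{align*}
By assumption {\it A1} together with $|y^{n+1}-y^{n,1}_{M_1}|\le \Dt C_g$, the last bracket supplies the $L_h C_g\Dt$ portion of the additive constant, while the explicit $\lambda$ will appear through the fast-variable increment bound. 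For PI2 the analogous manipulation of \eqref{macrox}--\eqref{def:kx} requires a \emph{second-order} Taylor expansion of $h_0$ using {\it A3}, because each PI2 increment is a difference of two microsolver outputs whose slow components differ by an $O(\td)$ amount; combined with the boundedness of $g$ from {\it A4}, this is the mechanism that generates the quadratic remainder $2L_{h'}C_g^2\td^2$.

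Next I would analyse the microsolver. Because \eqref{basefast} is linear in $x$ and forward Euler is used, induction on the microstep index gives
\begin{align*}
x^{n,j}_m - h_0(y^{n,j}_0) = \Bigl(I-\frac{\Lambda\dt}{\eps}\Bigr)^{m}\bigl(x^{n,j}_0 - h_0(y^{n,j}_0)\bigr) + O(\dt),
\end{align*}
where the $O(\dt)$ term captures the slow drift over the microsteps. Setting $M_j=a_j M$ and using $\min(\lambda_{ii})=1$ and $\max(\lambda_{ii})=\lambda$, the contraction factor on each eigendirection is $(1-\dt/\eps)^{a_jM}$, bounded below by $(1-\dt/\eps)^{aM}$ with $a=\min_{j>1}a_j$. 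For PI1, inserting this into $\hat k_{x,j}=\Dt f(x^{n,j}_{M_j},y^{n,j}_{M_j})$ yields
\begin{align*}
|\hat k_{x,j}| \le \frac{\lambda\Dt}{\eps}\Bigl(1-\frac{\dt}{\eps}\Bigr)^{aM} |x^{n,j}_0-h_0(y^{n,j}_0)|,
\end{align*}
and the initial deviation on the right is controlled via \eqref{def:znj0} and {\it A1} by a term containing $|\hat k_{x,j-1}|$. An induction in $j$ then produces $|\hat k_{x,j}|\le r^j |d^n| + \text{remainder}$ with $r=(\lambda\Dt/\eps)(1-\dt/\eps)^{aM}$; substitution into $\sum_j b_j\hat k_{x,j}$ together with the decomposition of the first paragraph gives the PI1 recurrence exactly as stated.

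For PI2 the same microsolver estimate feeds into \eqref{def:kx}: $k_{x,j}$ is a scaled difference of two microsolver endpoints that have each been contracted by $(1-\dt/\eps)^{a_{j+1}M}$, which explains the extra factor $(1-\dt/\eps)^{aM}$ pulled out in front of the sum in the stated bound; the prefactor $1/a$ emerges from the normalisation $1/a_{j+1}$ in \eqref{def:kx} together with the uniform choice $M_j=a_j M$ from \eqref{Mj}. I expect the main technical obstacle to be the inductive estimate of $|\hat k_{x,j}|$ in PI1: one must track the fast deviation and the slow drift simultaneously, because the slow drift shifts the linearisation base point $h_0(y^{n,j}_0)$ by $O(\Dt)$ at each stage $j$, and only after collapsing all such shifts into the single remainder $L_h C_g(1+\lambda)\Dt$ does the clean geometric structure of the stated inequality emerge. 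For PI2 the analogous obstacle is obtaining the quadratic $\td^2$ scaling, which demands the second-order expansion of $h_0$ via {\it A3} rather than a bare Lipschitz bound.
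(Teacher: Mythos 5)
Your overall architecture mirrors the paper's: decompose $d^{n+1}$ through the macrosolver, run a recursion over the increments $j$, and for PI2 extract the $2L_{h'}C_g^2\td^2$ term from a second-order Taylor expansion of $h_0$ (assumption {\it A3}) in which the first-order terms cancel against the increments. The genuine gap is in your microsolver estimate. You measure the deviation relative to the \emph{fixed} base point $h_0(y^{n,j}_0)$ and claim an $O(\dt)$ remainder; in fact the slow variable drifts by $O(M_j\dt)$ over the microsolver, so relative to a fixed base point the remainder is $O(L_hC_gM\dt)$, not $O(\dt)$. Moreover, since \eqref{basefast} gives $\hat k_{x,j}=-\frac{\Lambda\Dt}{\eps}\bigl(x^{n,j}_{M_j}-h_0(y^{n,j}_{M_j})\bigr)$, what is actually multiplied by the large factor $\lambda\Dt/\eps$ is the deviation from $h_0$ evaluated at the \emph{current} slow value. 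The paper's Lemma~\ref{lemma_dnm} bounds precisely this moving-base deviation via the one-step recursion $|d^{n,j}_{m+1}|\le\rho\,|d^{n,j}_m|+L_hC_g\dt$, whose drift contributions sum geometrically under the contraction to $L_hC_g\eps$; it is this $O(\eps)$ term, multiplied by $\lambda\Dt/\eps$, that produces the $\lambda L_hC_g\Dt$ half of the stated additive constant (the other half coming from $L_h|y^{n+1}-y^{n,1}_{M_1}|\le L_hC_g\Dt$, as you correctly note).

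With your estimate the argument fails quantitatively in one of two ways. If the $O(\dt)$ remainder is taken at face value it is simply false, and your displayed bound $|\hat k_{x,j}|\le\frac{\lambda\Dt}{\eps}\left(1-\frac{\dt}{\eps}\right)^{aM}|x^{n,j}_0-h_0(y^{n,j}_0)|$ carries no additive term at all, so the induction in $j$ cannot generate $L_hC_g(1+\lambda)\Dt$. If instead you correct the fixed-base drift to its true size $O(M\dt)$, the increment bound acquires an additive term of order $\lambda L_hC_g\Dt\,\frac{M\dt}{\eps}$, exceeding the stated constant by the factor $M\dt/\eps$ --- and that factor is forced to be large (of order $\ln(\lambda\Dt/\eps)$) by the very stability condition $\frac{\lambda\Dt}{\eps}\left(1-\frac{\dt}{\eps}\right)^{aM}<1$ you want to exploit. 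The repair is to carry the moving-base deviation $d^{n,j}_m=x^{n,j}_m-h_0(y^{n,j}_m)$ through the microsolver (Lemma~\ref{lemma_dnm}) and then run your recursion in $j$ on $|d^{n,j}_0|$ (as in Lemma~\ref{reinit.incr}) rather than on $|\hat k_{x,j}|$ with a fixed base; the rest of your plan, including the PI2 bookkeeping that yields the prefactor $\left(1-\frac{\dt}{\eps}\right)^{aM}/a$ and the curvature term, then goes through as in Lemmas~\ref{d.PI1} and~\ref{d.PI2}.
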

\ktwo{\begin{remark}
The stability condition for the fast variables of the PI1 and PI2 methods is identical to the corresponding stability condition for an Euler macrosolver given by Assumption 8 in \cite{GottwaldMaclean13} (see also \cite{LafitteEtAl14}).
\end{remark}}
We note that Theorem~\ref{theorem.sec} can be formulated for a microsolver of order $p>1$ but, as we shall see, optimal convergence results are given by a forward Euler microsolver. \\

We briefly discuss the stability condition and the bounds for $|d^{n+1}|$ established above. The stability condition can be understood as follows: $\left(1-{\dt}/{\eps}\right)^{a M}$ denotes the exponential contraction of the fast variables towards the slow manifold during the application of the microsolver; if this contraction rate does not bring the fast variables within a neighbourhood of $\epsilon/\lambda$ of the slow manifold, the fast variables will not have sufficiently relaxed and their dynamics remains stiff, possibly causing numerical instability over the subsequent integration steps.\\
The bounds for the deviation of the fast variables from the slow manifold are different for PI1 and PI2. In particular, Theorem~\ref{theorem.sec} suggests that for a given macrostep size the fast variables deviate less from the slow manifold in our modified version PI2. This will be confirmed numerically in Section~\ref{sec:numerics}.\\ 



\noindent
In the next section we prove Theorems~\ref{theorem.main} and \ref{theorem.sec}. We formulate the proofs for PI2 and point out where and how they will differ for PI1.


\subsection{Error Analysis}
We split the error $\mathcal{E}^n$ between the PI approximation of the slow variables and their true value into two parts. Denote by $Y(t^n)$ the time-continuous solution of the reduced ordinary differential equation (\ref{e.CMT}) evaluated at time $t^n$, then
\begin{align*}
{\mathcal{E}}^n &= |y_\eps(t^n)-y^n|\\
&\le |y_\eps(t^n)-Y(t^n)| + |y^n-Y(t^n)|
\; ,
\end{align*}
where the first term describes the error between the exact solutions of the full system (\ref{baseslow})-(\ref{basefast}) and the reduced slow system (\ref{e.CMT}), which we label {\emph{reduction error}}, with
\begin{align}
E^n_r = y_\eps(t^n)-Y(t^n)\; ,
\end{align}
and the second term the error between PI and the exact solution of the reduced slow system (\ref{e.CMT}), which we label {\emph{discretization error}}, with
\begin{align}
E_d^n = y^n-Y(t^n)\; .
\end{align}
We will bound the two terms separately in the following. 


\subsection{Reduction error}
Setting the initial conditions close to the slow manifold with $y_\eps(0)=Y(0)+c_{0,y}\,\eps$ and $x_\eps(0)=h_\eps(y_\eps(0))+c_{0,x}$, we formulate the following theorem for the reduction error $E^n_r$.
\medskip
\begin{theorem}
\label{theorem.Ec}
Given assumptions ({\it A1})--({\it A3}), there exists a constant $C_1$ such that on a fixed time interval $T$, for each $t^n\le T$, the difference between the exact solutions of the reduced and the full system is bounded by
\begin{align*}
|E^n_r| \le C_1 \varepsilon \; ,
\end{align*}
with 
\[
C_1=
{\rm{max}}\left(|y_\eps(0)-Y(0)|,L_g|d_\eps(0)|) \right) 
e^{L_g\left(1+L_h\right)t^n}
\; ,
\]
where $d_\eps=x_\eps-h_\eps(y_\eps)$ measures the distance of the fast variables from the slow manifold.
\end{theorem}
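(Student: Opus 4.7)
The strategy is to express the reduction error in integral form, split the integrand into an off-manifold piece and an on-manifold piece, and close with a Gronwall argument. Using \eqref{baseslow} for $\dot y_\eps$ and \eqref{e.CMT} for $\dot Y$, I would begin with
\[
y_\eps(t)-Y(t) = (y_\eps(0)-Y(0)) + \int_0^t\bigl[g(x_\eps(s),y_\eps(s))-G(Y(s))\bigr]\,\d s,
\]
and insert $\pm\, g(h_\eps(y_\eps(s)),y_\eps(s)) = \pm\, G(y_\eps(s))$, recalling \eqref{e.G0}, to split the integrand as
\[
\bigl[g(x_\eps,y_\eps)-g(h_\eps(y_\eps),y_\eps)\bigr] + \bigl[G(y_\eps)-G(Y)\bigr].
\]
By assumption ({\it A2}) the first bracket is bounded by $L_g|d_\eps(s)|$, and by the remark following the assumptions the second is bounded by $L_G|y_\eps(s)-Y(s)|$ with $L_G\le L_g(1+L_h)$.

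The key ingredient---and the main obstacle---is a bound of the form $\int_0^t |d_\eps(s)|\,\d s \le \eps\,|d_\eps(0)|$. This follows from the invariance and attractivity of the slow manifold: differentiating $d_\eps = x_\eps - h_\eps(y_\eps)$ along the full flow \eqref{baseslow}--\eqref{basefast} and using that $h_\eps$ is exactly invariant, one finds that $d_\eps$ contracts exponentially on the fast time scale,
\[
|d_\eps(s)| \le |d_\eps(0)|\,e^{-s/\eps},
\]
since $\min_i \lambda_{ii}=1$. Integrating then gives $\int_0^t|d_\eps(s)|\,\d s \le \eps\,|d_\eps(0)|$, so the off-manifold contribution to the error is already of order $\eps$.

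Inserting both bounds into the integral equation yields
\[
|y_\eps(t)-Y(t)| \le |y_\eps(0)-Y(0)| + L_g\,\eps\,|d_\eps(0)| + L_G\int_0^t |y_\eps(s)-Y(s)|\,\d s,
\]
and Gronwall's inequality converts the integral term into the exponential factor $e^{L_G t}$. Bounding the two inhomogeneous terms by (a multiple of) their maximum then yields $|E^n_r| \le C_1\eps$ with $C_1$ of the claimed form, completing the proof. The global Lipschitz hypotheses are used only to let Gronwall run over the whole interval $[0,T]$; they may be relaxed to local ones, as noted immediately before the theorem.
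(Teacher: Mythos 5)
Your strategy is the standard one for this reduction estimate; the paper in fact omits the proof entirely, referring to Carr and to Gottwald--Maclean, and the argument intended there is precisely this splitting of the integrand via $G(y)=g(h_\eps(y),y)$ plus a Gronwall step, so in substance your proposal matches the expected proof and delivers the claimed constant (up to the harmless factor $2$ from replacing the sum of the two inhomogeneous terms by their maximum). One step, however, is stated more strongly than you justify: the claim $|d_\eps(s)|\le |d_\eps(0)|\,e^{-s/\eps}$. Differentiating $d_\eps=x_\eps-h_\eps(y_\eps)$ along the full flow and using the exact invariance of $h_\eps$ removes only the on-manifold balance; what remains is
\[
\dot d_\eps \;=\; -\frac{\Lambda}{\eps}\, d_\eps \;+\; \D h_\eps(y_\eps)\,\bigl[g\bigl(h_\eps(y_\eps),y_\eps\bigr)-g\bigl(x_\eps,y_\eps\bigr)\bigr]\,,
\]
and the second term does not vanish off the manifold; by ({\it A1})--({\it A2}) it is only bounded by (roughly) $L_g L_h\,|d_\eps|$. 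Hence the contraction rate is $1/\eps - L_g L_h+\mathcal{O}(\eps)$ rather than exactly $1/\eps$, giving
\[
\int_0^t |d_\eps(s)|\,\d s \;\le\; \frac{\eps\,|d_\eps(0)|}{1-\eps L_g L_h}\,,
\]
valid for $\eps$ sufficiently small. This changes nothing essential: the off-manifold contribution is still of order $\eps\,|d_\eps(0)|$, the Gronwall argument closes as you describe with exponent $L_G\le L_g(1+L_h)$, and the only cost is a bounded inflation of $C_1$. With that repair your proof is complete.
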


\noindent
The proof is standard and is omitted here. The interested reader is referred to, for example, \cite{Carr,GottwaldMaclean13}.

\subsection{Discretization error}
We bound the discretization error $E^n_d = y^n - Y(t^n)$ 
\ktwo{in stages. We first give a proof for the convergence of a PI approximation of the reduced dynamics to the true reduced dynamics in Proposition~\ref{lemma_Gtwiddle}. We then compare the PI approximations of the reduced and the full multi-scale dynamics, and combine the two results to bound $E^n_d$.\\ 
To achieve the first bound we introduce} the auxiliary vector field $\tilde{G}$, which describes the PI2 method applied to the reduced slow system \eqref{e.CMT}. We first show that $\tilde{G}(Y(t^n))$ is close to a standard Runge-Kutta solver applied to $Y(t^n+M_1\dt)$; then we bound the difference between the auxiliary vectorfield $\tilde{G}(y^n)$ and the PI2 vectorfield for the slow variable $\tilde{g}(x^n,y^n)$. 


\noindent
Denote by $\phi^{m,\dt}$ the flow map for the microsolver of order p applied to the reduced system \eqref{e.CMT} for $m$ microsteps with time step $\dt$. Given initial condition $Y^n$ at $t=t^n$, we construct $\Gt$ analogously to the construction of $\gt$ used in PI2. We define
$Y^{n,j}_{m}$ for $j=1,2,\dots,P+1$, $m=1,2,\dots,M_j$ as the output of the microsolver
\begin{align}
\label{def:Ynjm} Y^{n,j}_{m} &= \phi^{m,\dt}\left(Y^{n,j}_{0} \right) \;\; ,
\end{align}
analogous to \eqref{def:2znjm}, with initial condition
\begin{align}
\label{def:Ynj0} Y^{n,j}_{0} &= \twoparts{Y^n}{j=1}
{Y^{n,1}_{M_1} + a_j \Dt\,G(Y^{n,j-1}_{M_{j-1}})}{j>1}\;\;, 
\end{align}
analogous to \eqref{def:2znj0} and \eqref{2hatkz}. The increments are constructed by 
\begin{align}
\label{def:K} K_j(Y^n) &= \frac{1}{a_{j+1}}\left(Y^{n,j+1}_{M_{j+1}}-Y^{n,1}_{M_1}\right)\;\;,
\end{align}
analogous to \eqref{def:2kz}. Combining \eqref{def:Ynjm}--\eqref{def:K}, we form the auxiliary vectorfield
 \begin{align}
\label{def:Gtwiddle} \tilde{G}(Y^n) &= Y^{n,1}_{M_1} - Y^n + \sum_{j=1}^{P} b_j K_j(Y^n) \;\;,
\end{align}
analogous to the PI vectorfield \eqref{def:gtwiddle} of the macrosolver.\\

\noindent
In the following Proposition we demonstrate that $\tilde{G}$ evaluated at $Y(t^n)$ incurs an error of order $\mathcal{O}(\td^{P+1})$ over one macrostep, like standard Runge-Kutta methods, with an additional error term incurred by the applications of the microsolver.

\medskip
\begin{proposition}
\label{lemma_Gtwiddle} 
Given assumptions ({\it A1}), ({\it A2}), ({\it A4}) and ({\it A5}), $\tilde{G}(Y(t^n))$ provides a numerical estimate of the reduced slow vectorfield with
\begin{align*}
Y(t^{n+1}) = Y(t^{n}) + \tilde{G}(Y(t^{n})) + \mathcal{O}(\td^{P+1},\td M \dt) \;,
\end{align*}
where the error term $\mathcal{O}(\td^{P+1},\td M \dt)$ is bounded by $C_P^*\, \td^{P+1} + C_2^*\, \td M \dt$.
\end{proposition}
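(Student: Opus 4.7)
The plan is to view $\tilde G$ as a small perturbation of the classical Runge--Kutta scheme of order $P$ applied to the smooth, non-stiff reduced equation $\dot Y = G(Y)$ with \emph{effective} step size $\td = \Dt + M\dt$. The standard local truncation estimate for RK then supplies the $C_P^*\td^{P+1}$ contribution, while the perturbation introduces the $C_2^*\td M\dt$ contribution. Since the microsolver $\phi^{m,\dt}$ of order $p$ acts on the smooth reduced flow rather than on the stiff full system, its own discretisation error after at most $M$ steps is $\mathcal{O}(M\dt^{p+1})$ and can be absorbed into $\mathcal{O}(\td M\dt)$ because $\dt \le \td$ and $p\ge 1$.

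The key technical step is a Taylor expansion of the microsolver output about its initial datum. Using \eqref{def:Ynjm}--\eqref{def:Ynj0} and expanding $\phi^{a_{j+1}M,\dt}(Y^{n,j+1}_0)$ to second order in the short time $a_{j+1}M\dt$, one obtains
\begin{align*}
Y^{n,j+1}_{M_{j+1}} = Y^{n,1}_{M_1} + a_{j+1}\Dt\, G(Y^{n,j}_{M_j}) + a_{j+1} M\dt\, G(Y^{n,j+1}_0) + \mathcal{O}((M\dt)^2).
\end{align*}
The Lipschitz bound from ({\it A2}) lets me replace $G(Y^{n,j+1}_0)$ by $G(Y^{n,j}_{M_j})$ at a cost of order $L_G \Dt\, M\dt$, and dividing by $a_{j+1}$ as in \eqref{def:K} yields
\begin{align*}
K_j(Y^n) = \td\, G(Y^{n,j}_{M_j}) + \mathcal{O}(\td M\dt).
\end{align*}

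A short induction on $j$, applying the same Taylor expansion to $Y^{n,j}_{M_j}$ itself, then shows that $Y^{n,j}_{M_j} = Y^n + a_j k_{j-1}^{\mathrm{RK}} + \mathcal{O}(\td M\dt)$, where $k_j^{\mathrm{RK}}(Y^n) = \td\, G(Y^n + a_j k_{j-1}^{\mathrm{RK}})$ denote the increments of the classical Runge--Kutta scheme of order $P$ with step $\td$ applied to $\dot Y = G(Y)$ from initial datum $Y^n$. Summing with the weights $b_j$, and using that $a_1 = 0$ forces $M_1 = 0$ and hence $Y^{n,1}_{M_1} = Y^n$, gives
\begin{align*}
\tilde G(Y(t^n)) = \sum_{j=1}^P b_j\, k_j^{\mathrm{RK}}(Y(t^n)) + \mathcal{O}(\td M\dt),
\end{align*}
and the standard order-$P$ RK local truncation estimate identifies the RK sum with $Y(t^n+\td) - Y(t^n) + \mathcal{O}(C_P^*\td^{P+1})$, completing the bound.

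The principal difficulty I anticipate is the bookkeeping in the induction. The microsolver-driven drift of size $a_{j+1}M\dt$ must combine cleanly with the Euler-like jump $a_{j+1}\Dt\,G(Y^{n,j}_{M_j})$ to reproduce an honest RK intermediate with effective step $\td$, and the recursively defined intermediates $Y^{n,j}_{M_j}$ must be shown to match their RK counterparts uniformly in $j$ so that the $\mathcal{O}(\td M\dt)$ perturbation terms remain of the stated order across all $P$ stages. Assumption ({\it A6}), combined with the Lipschitz constant $L_G$ supplied by the remark following the Assumptions, provides the finite-horizon control needed to keep the induction under control.
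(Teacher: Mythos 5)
Your overall route --- Taylor-expanding each microsolver segment as a drift of size $a_{j+1}M\dt\,G(\cdot)$, recognising $K_j$ as an order-$P$ Runge--Kutta increment with effective step $\td=\Dt+M\dt$ up to an $\mathcal{O}(\td M\dt)$ perturbation, and invoking the standard local truncation bound --- is essentially the same decomposition the paper uses. However, there is a genuine gap: your claim that ``$a_1=0$ forces $M_1=0$ and hence $Y^{n,1}_{M_1}=Y^n$'' is false for the scheme as defined. The relation $M_j=a_jM$ in \eqref{Mj} fixes the microstep counts for the increments $j>1$; $M_1>0$ is a separate parameter describing the initial relaxation of each macrostep (in the numerics, PI2 uses $M_1=M$), the macrostep covers the time $\td+M_1\dt$ (the paper states $t^n=n(\td+M_1\dt)$ for PI2), and the term $Y^{n,1}_{M_1}-Y^n$ appears explicitly in the definition \eqref{def:Gtwiddle} of $\tilde G$ precisely to account for this initial segment. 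By setting $Y^{n,1}_{M_1}=Y^n$ you prove the statement only in the degenerate case $M_1=0$; for the actual scheme your argument mismatches the time interval by $M_1\dt$, which is not of order $\td M\dt$.

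To close the gap you must (i) treat $Y^{n,1}_{M_1}=\phi^{M_1,\dt}(Y^n)$ as an approximation of $Y(t^n+M_1\dt)$ with error $\mathcal{O}(M_1\dt^{p+1})$ (absorbable into $\mathcal{O}(\td M\dt)$ since $p\ge1$ and $\dt\le\td$), (ii) base the comparison Runge--Kutta increments $\bar K_j$ with step $\td$ at $Y^{n,1}_{M_1}$ rather than at $Y^n$, so that your induction bounds $|Y^{n,j}_{M_j}-(Y^{n,1}_{M_1}+a_j\bar K_{j-1}(Y^{n,1}_{M_1}))|$, and (iii) propagate the initialisation mismatch through the stages via the Lipschitz constant $L_G$ (this costs an extra $L_G\td\,C_p^*M_1\dt^{p+1}$ per stage, again harmless). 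With these corrections your Taylor-expansion bookkeeping reproduces the paper's proof; also note that the replacement of $G(Y^{n,j+1}_0)$ by $G(Y^{n,j}_{M_j})$ costs $\mathcal{O}(\td)$ inside the Lipschitz bound (not $\mathcal{O}(\Dt M\dt)$ as stated), which multiplied by $M\dt$ is still of the admissible order $\td M\dt$.
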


\begin{proof}
The increments $\bar{K}_j$ of a Runge-Kutta solver of order P applied to the reduced system \eqref{e.CMT}, initialised at $Y$ with time step $\td$, are given by
\begin{align}
\label{rk2}\bar{K}_j(Y) &= \td \, G(Y + a_j \bar{K}_{j-1}(Y)) \;\;.
\end{align}
For a Runge-Kutta solver of order P initialised at $Y(t^n+M_1\dt)$ we have
\begin{align}
\label{red.c}Y(t^{n+1}) =& Y(t^n+M_1\dt) + \sum_{j=1}^P b_j \bar{K}_j(Y(t^n+M_1\dt)) + \mathcal{O}\left(\td^{P+1}\right) \;\;,
\end{align}
where the $\mathcal{O}(\td^{P+1})$ term is bounded by $C_P^* \td^{P+1}$ \cite{Iserles}. Similarly, a microsolver $\phi$ of order p satisfies
\begin{align}
\label{m.p} \left|Y(t^n+M_1\dt) - Y^{n,1}_{M_1}\right| \le C_p^* M_1\dt^{p+1} \;\; .
\end{align}
 The Runge-Kutta solver \eqref{red.c} is rewritten as
\begin{align}
\label{red.micro1}Y(t^{n+1}) =& Y^{n,1}_{M_1} + \sum_{j=1}^P b_j \bar{K}_j(Y^{n,1}_{M_1}) \\
\nonumber &+ \sum_{j=1}^P b_j \left(\bar{K}_j(Y(t^n+M_1\dt))-\bar{K}_j(Y^{n,1}_{M_1})\right) + \mathcal{O}\left(\td^{P+1},{M_1\dt^{p+1}}\right) \;\;.
\end{align}
Employing assumptions ({\it A1})--({\it A2}) on the Lipschitz continuity of the reduced dynamics, \eqref{m.p} and the definition \eqref{rk2} of the Runge-Kutta increments $\bar{K}$ we bound
\begin{align*}
\left|\bar{K}_j(Y(t^n+M_1\dt)) - \bar{K}_j(Y^{n,1}_{M_1}) \right| \le& L_G \td \left| Y(t^n+M_1\dt) - Y^{n,1}_{M_1} \right|
\\
&+ L_G \td a_j \left| \bar{K}_{j-1}(Y(t^n+M_1\dt)) - \bar{K}_{j-1}(Y^{n,1}_{M_1}) \right|
\\
\le& L_G \td  C_p^* M_1 \dt^{p+1}\\ 
&+L_G \td a_j\left|\bar{K}_{j-1}(Y(t^n+M_1\dt)) - \bar{K}_{j-1}(Y^{n,1}_{M_1})\right| \;\; .
\end{align*}
Iterating this relationship with $a_1 = 0$ yields
\begin{align*}
\left|\bar{K}_j(Y(t^n+M_1\dt)) - \bar{K}_j(Y^{n,1}_{M_1}) \right| \le& L_G \td  C_p^* M_1 \dt^{p+1} + \mathcal{O}(\td^2M_1\dt^{p+1})\;\;.
\end{align*}
Upon substitution into \eqref{red.micro1} we obtain
\begin{align}
\label{red.micro}Y(t^{n+1}) =& Y^{n,1}_{M_1} + \sum_{j=1}^P b_j \bar{K}_j(Y^{n,1}_{M_1}) + \mathcal{O}\left(\td^{P+1},{M_1\dt^{p+1}}\right) \;\;,
\end{align}
which describes a Runge-Kutta method of order P, initialised at $Y^{n,1}_{M_1}$. The auxiliary vectorfield $\tilde{G}$ given by \eqref{def:Gtwiddle} with $Y^n = Y(t^n)$ is now constructed from \eqref{red.micro}. We write
\begin{align}
\nonumber Y(t^{n+1}) =& Y^{n,1}_{M_1} + \sum_{j=1}^P\! b_j\! \left({K}_j\left(Y(t^n)\right) +\bar{K}_j(Y^{n,1}_{M_1}) - {K}_j(Y(t^n))\!\right) + \mathcal{O}\left(\td^{P+1},{M_1\dt^{p+1}}\right) \\
\label{aux.main}=& Y(t^{n}) + \tilde{G}(Y(t^{n})) + \sum_{j=1}^P\! b_j\! \left(\bar{K}_j(Y^{n,1}_{M_1}) - {K}_j(Y(t^n))\!\right) + \mathcal{O}\left(\td^{P+1},{M_1\dt^{p+1}}\right) \;,
\end{align} 
where the increments $K_j$ are defined in \eqref{def:K}.

\noindent
We now bound $|\bar{K}_j(Y^{n,1}_{M_1}) - {K}_j(Y(t^n))|$ in \eqref{aux.main}. Rearranging the definition of $K_j$, \eqref{def:K}, we obtain
\begin{align}
\nonumber Y^{n,1}_{M_1}+a_{j+1}K_j(Y(t^n)) &= Y^{n,j+1}_{M_{j+1}} \\
\label{red.m}&= \phi^{M_{j+1},\dt}\left(Y^{n,j+1}_{0}\right)\;\;.
\end{align}
Similarly we use the definition of $\bar{K}_j(^{n,1}_{M_1})$, \eqref{rk2}, to obtain
\begin{align}
\nonumber Y^{n,1}_{M_1}+a_{j+1}\bar{K}_j(Y^{n,1}_{M_1}) =&\, Y^{n,1}_{M_1} + a_{j+1} \td \, G\left(Y^{n,1}_{M_1} + a_j \bar{K}_{j-1}  \right) \\
\nonumber =&\, Y^{n,1}_{M_1} + a_{j+1}\Dt\,G(Y^{n,1}_{M_1} + a_j K_{j-1})\\
\nonumber & +a_{j+1} \Dt \, \left( G\left(Y^{n,1}_{M_1} + a_j \bar{K}_{j-1}  \right) - G(Y^{n,1}_{M_1} + a_j K_{j-1})\right)\\
\nonumber &+a_{j+1} M\dt \, G\left(Y^{n,1}_{M_1} + a_j \bar{K}_{j-1}  \right)\\ \vspace{4pt}
\label{red.int} =&\, Y^{n,j+1}_{0} + a_{j+1}M\dt G(Y^{n,j+1}_{0} ) \\
\nonumber & +a_{j+1} \Dt \, \left( G\left(Y^{n,1}_{M_1} + a_j \bar{K}_{j-1}  \right) - G(Y^{n,1}_{M_1} + a_j K_{j-1})\right)\\
\nonumber &+a_{j+1} M\dt \, \left(G\left(Y^{n,1}_{M_1} + a_j \bar{K}_{j-1}  \right) -  G(Y^{n,j+1}_{0} )\right) \;\; ,
\end{align}
where we have suppressed the dependencies of $\bar{K}_j$ and $K_j$ on the right-hand side and used $\td = \Dt + M\dt$. Subtracting \eqref{red.m} from \eqref{red.int}, applying absolute values and dividing by $a_{j+1}$ yields the bound
\begin{align}
\label{red.ed} \left| \bar{K}_j(Y^{n,1}_{M_1}) - K_j(Y(t^n)) \right| \!\le&\! \frac{1}{a_{j+1}}\!\left|Y^{n,j+1}_{0} \!+\! a_{j+1}M\dt G(Y^{n,j+1}_{0} ) \!-\! \phi^{M_{j+1},\dt}\!\left(Y^{n,j+1}_{0}\right)\!\right|\\
\nonumber & + \Dt \, \left| G\left(Y^{n,1}_{M_1} + a_j \bar{K}_{j-1}  \right) - G(Y^{n,1}_{M_1} + a_j K_{j-1})\right|\\
\nonumber &+ M\dt \, \left|G\left(Y^{n,1}_{M_1} + a_j \bar{K}_{j-1}  \right) -  G(Y^{n,j+1}_{0} )\right| \;\; .
\end{align}
We now bound the three lines of \eqref{red.ed} separately. In the first line, we interpret $Y^{n,j+1}_{0} + a_{j+1}M\dt G(Y^{n,j+1}_{0} ) $ as a single Euler step with time step $a_{j+1}M\dt$ initialised at $Y^{n,j+1}_{0}$. The remaining term of the first line, $\phi^{M_{j+1},\dt}\left(Y^{n,j+1}_{0}\right)$, describes a microsolver of order p run for $M_{j+1} = a_{j+1}M$ steps, also initialised at $Y^{n,j+1}_{0}$. Therefore
\begin{align}
\label{red.t1} \left|Y^{n,j+1}_{0} \!+\! a_{j+1}M\dt G(Y^{n,j+1}_{0} ) \!-\! \phi^{M_{j+1},\dt}\!\left(\!Y^{n,j+1}_{0}\!\right)\!\right|\le \mathcal{O}(a_{j+1}M\dt^{p+1},(a_{j+1}M\dt)^2)\;.
\end{align}
The second line in \eqref{red.ed}, employing Assumptions ({\it A1})--({\it A2}) on the Lipschitz continuity of the reduced dynamics, is bounded by
\begin{align}
\label{red.t2} \Dt \, \left| G\left(Y^{n,1}_{M_1} + a_j \bar{K}_{j-1}  \right) - G(Y^{n,1}_{M_1} + a_j K_{j-1})\right| \leq&  a_j \Dt L_G \left| \bar{K}_{j-1}  - K_{j-1}\right|\;\; .
\end{align}
Upon using Assumption ({\it A4}) on the boundedness of the reduced dynamics and Assumptions ({\it A1})--({\it A2}), we bound the term in absolute values in the third line of \eqref{red.ed} by
\begin{align}
\nonumber  \left|G\left(Y^{n,1}_{M_1} + a_j \bar{K}_{j-1}  \right) -  G(Y^{n,j+1}_{0} )\right| \leq &  L_G \, \left|Y^{n,1}_{M_1} + a_j \bar{K}_{j-1} -  Y^{n,j+1}_{0}\right| 
\\
\nonumber = & L_G \, \big|a_j \td\, G\left(Y^{n,1}_{M_1}+a_{j-1}\bar{K}_{j-2}\right)\\
\nonumber &\qquad - a_{j+1}\Dt\,G\left(Y^{n,j}_{M_j}\right)\big|  \\
 \label{red.t3}\le&2C_2^* \td \;\;,
\end{align}
where we employed the bound on the nodes $a_j\le 1$, and Assumption ({\it A5}) on the smoothness of the reduced dynamics with $C_2^* = \sup|\ddot{Y}| = \sup|\D G(Y)\, G(Y)| = L_G C_G$.\\
Substituting \eqref{red.t1}--\eqref{red.t3} into \eqref{red.ed} yields the bound
\begin{align}
\label{red.KK} \left| \bar{K}_j(Y^{n,1}_{M_1}) - K_j(Y(t^n)) \right| \le & a_j \Dt L_G \left| \bar{K}_{j-1}  - K_{j-1}\right|+ 2C_2^*\td M\dt \\
\nonumber & +\mathcal{O}(M\dt^{p+1}, a_{j+1}(M\dt)^2)) \;\; .
\end{align}
Substituting $j=1$, noting that $a_1=0$ and neglecting higher order terms, we have
\begin{align}
\label{red.base} \left| \bar{K}_1(Y^{n,1}_{M_1}) - K_1(Y(t^n)) \right| \le &  2C_2^*\td M\dt \;\; .
\end{align}
Iteration of \eqref{red.KK}, seeded with \eqref{red.base} at $j=1$, yields
\begin{align*}
\left| \bar{K}_j(Y^{n,1}_{M_1}) - K_j(Y(t^n)) \right| \le &  2C_2^*\td M\dt \;\;.
\end{align*}
The Proposition now follows directly by substituting into \eqref{aux.main} and using the weighting condition $\textstyle\sum_{j=1}^P b_j =1$.
\end{proof}
\begin{remark}\label{PI1.p1}
Proposition~\ref{lemma_Gtwiddle} can be readily extended for PI1. This Proposition employs auxiliary increments $K_j$ designed to resemble the PI2 increments \eqref{def:2kz}. In order to prove this result for the PI1 method, one should instead employ auxiliary increments $\hat{K}_j = \Dt \, G(Y^{n,j}_{M_j})$, which resemble the PI1 increments \eqref{hatkz}. Following from \eqref{aux.main}, one can then readily bound $|\hat{K}_j - \bar{K}_j| \le 2C_2^* \Dt M \dt$ to obtain the same bound.\\
\end{remark}

\noindent
Proposition~\ref{lemma_Gtwiddle} establishes that using $\Gt$ to propagate the reduced dynamics incurs error proportional to $\td^{P+1} + \td M\dt$. In particular, these terms do not depend on the order p of the microsolver. To simplify the calculations, we therefore use a forward Euler method as the microsolver for the reduced system. In particular, we consider
\begin{align}
\label{eY}  Y^{n,j}_{m} &= Y^{n,j}_{m-1} + \dt\, G(Y^{n,j}_{m-1}) \;\;,
\end{align}
for $0\le m\le M_j$.\\

\noindent
We now use Proposition~\ref{lemma_Gtwiddle} to bound the error between the PI2 approximation of the slow variable $y^n$ in a full multiscale simulation and the reduced dynamics $Y(t^n)$. 
\begin{lemma}
\label{lemma_rk}
Given assumptions ({\it A1})--({\it A5}), the discretization error $|E^n_d| = \left|y^n-Y(t^n)\right|$ is bounded by
\begin{align*}
|E^n_d| \le& \frac{e^{L_G t^n}}{L_G} 
 \Bigg\{  
C_P^* \td^P + C_2^*M\dt
 + \max_{0\le i \le n-1}\left|\frac{\tilde{g}(x^i,y^i)-\tilde{G}(x^i,y^i)}{\td}\right|\Bigg\} \; .
\end{align*}

\end{lemma}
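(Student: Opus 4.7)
My plan is to combine the PI2 discrete update $y^{n+1}=y^n+\tilde g(x^n,y^n)$ with the identity from Proposition \ref{lemma_Gtwiddle}, $Y(t^{n+1}) = Y(t^n) + \tilde G(Y(t^n)) + R^n$ with $|R^n|\le C_P^* \td^{P+1} + C_2^* \td M \dt$, to derive a one-step recursion for $E_d^n$. Subtracting and inserting $\pm\,\tilde G(y^n)$ gives
\begin{equation*}
E_d^{n+1} = E_d^n + \bigl[\tilde G(y^n) - \tilde G(Y(t^n))\bigr] + \bigl[\tilde g(x^n,y^n)-\tilde G(y^n)\bigr] - R^n,
\end{equation*}
splitting the one-step error into a self-contraction governed by the Lipschitz constant of $\tilde G$, the intrinsic macrosolver mismatch $\tilde g - \tilde G$ evaluated along the actual PI2 trajectory (which is the quantity appearing explicitly in the target bound), and a truncation piece already controlled by Proposition \ref{lemma_Gtwiddle}.

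Next I would establish a sharp Lipschitz estimate for $\tilde G$. Since $\tilde G$ in \eqref{def:Gtwiddle} is built from $M_1$ forward Euler steps of $G$ followed by a Runge-Kutta-like combination of increments $K_j$ from \eqref{def:K} that themselves involve nested evaluations of $G$ along flows of the Euler microsolver, I would iterate the Lipschitz bound of the remark ($L_G\le L_g(1+L_h)$) stage by stage, absorbing higher-order corrections via A6 ($L_G\td<\tfrac{1}{2}$). The goal is
\begin{equation*}
\bigl|\tilde G(y_1)-\tilde G(y_2)\bigr| \le L_G\td\,|y_1-y_2|\bigl(1+\mathcal{O}(L_G\td)\bigr),
\end{equation*}
which under A6 gives an effective one-step contraction factor of the form $1+L_G\td$.

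Substituting into the recursion and taking absolute values I obtain
\begin{equation*}
|E_d^{n+1}| \le (1+L_G\td)|E_d^n| + \td\max_{0\le i\le n-1}\Bigl|\tfrac{\tilde g(x^i,y^i)-\tilde G(y^i)}{\td}\Bigr| + C_P^*\,\td^{P+1} + C_2^*\,\td M\dt.
\end{equation*}
With $E_d^0=0$, a standard discrete Gronwall summation bounds $|E_d^n|$ by $\bigl((1+L_G\td)^n-1\bigr)/(L_G\td)$ times the bracketed per-step budget; using $(1+L_G\td)^n \le e^{L_G n\td}= e^{L_G t^n}$ and dividing through by $L_G\td$ converts $\td^{P+1}$ and $\td M\dt$ into $\td^{P}/L_G$ and $M\dt/L_G$ respectively, matching the stated inequality.

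\textbf{Main obstacle.} The delicate step is the sharp Lipschitz estimate for $\tilde G$: the nested structure of the increments $K_j$, whose initial conditions $Y^{n,j}_0$ depend on $K_{j-1}$ through \eqref{def:Ynj0}, makes it tempting to apply coarse bounds that introduce spurious multiplicative factors which, after $n\sim T/\td$ iterations in the Gronwall step, would destroy the $e^{L_G t^n}/L_G$ prefactor. I would address this by an induction on the stage index $j$, bounding $|K_j(y_1)-K_j(y_2)|$ in terms of $|K_{j-1}(y_1)-K_{j-1}(y_2)|$ using only A1, A2 and A6, and only at the end summing against the weights $b_j$ using $\sum_j b_j=1$.
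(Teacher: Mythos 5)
Your proposal follows essentially the same route as the paper's proof: the same one-step recursion obtained by inserting $\pm\,\tilde G(y^n)$, the same use of Proposition~\ref{lemma_Gtwiddle} for the truncation term, a bound on the sensitivity of $\tilde G$ to its argument, and a discrete Gronwall iteration seeded with $E_d^0=0$. The only difference is technical rather than conceptual: the paper controls the variation of $\tilde G$ via the mean value theorem and the explicit Euler-microsolver formula for $\tilde G$, obtaining $|\mathcal{L}_{G}^{n}|\le L_G(\td+M_1\dt)$ and absorbing the extra $M_1\dt$ through the PI2 convention $t^n=n(\td+M_1\dt)$, whereas you propose an equivalent stage-wise Lipschitz induction on the increments $K_j$ yielding the same contraction factor.
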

\begin{proof}
This result follows from \cite{EEngquist03}. Employing Proposition~\ref{lemma_Gtwiddle}, we have
\begin{align}
\nonumber E^n_d =& E^{n-1}_d + \gt(x^{n-1},y^{n-1}) - \Gt(Y(t^{n-1})) + \mathcal{O}(\td^{P+1},\td M \dt) \\
\label{gw} =& E^{n-1}_d + \mathcal{L}_{G}^{n-1} \, E^{n-1}_d + \gt(x^{n-1},y^{n-1}) -\Gt(y^{n-1})  + \mathcal{O}(\td^{P+1},\td M \dt) \;\;,
\end{align}
 where we used the mean value theorem for vector-valued functions to introduce 
\begin{align}
\label{rk_LE} \LE^n&=\int_0^1 \! \D \tilde{G}\big(Y(t^n)+\theta(y^n-Y(t^n))\big) \, d\theta \; ,
\end{align}
where $\D \tilde{G}$ is the Jacobian matrix of $\tilde{G}$. Recall that the $\mathcal{O}(\td^{P+1},\td M \dt)$ term in \eqref{gw} is bounded by $C_P^*\td^{P+1} + C_2^*\td M \dt$; taking absolute values of \eqref{gw} then yields
\begin{align}
\label{rk.int}
|E^n_d| \le & \left(1 + \left|\mathcal{L}_{G}^{n-1} \right|\right)\left| E^{n-1}_d \right| + \left|\gt(x^{n-1},y^{n-1}) -\Gt(y^{n-1})\right|  + C_P^*\td^{P+1} + C_2^*\td M \dt \;\;.
\end{align}
To bound $|\mathcal{L}_{G}^{n-1} |$, we first obtain an explicit formula for $\Gt$. Substituting \eqref{def:Gtwiddle}, \eqref{def:K} and the Euler microsolver \eqref{eY} into $\Gt$, \eqref{def:Gtwiddle}, we obtain
\begin{align*}
\tilde{G}(Y^n) =& Y^{n,1}_{M_1} - Y^n + \sum_{j=1}^{P} \frac{b_j}{a_{j+1}}\left(Y^{n,j+1}_{M_{j+1}} - Y^{n,1}_{M_1} \right) \\
=& Y^n + \dt \sum_{k=0}^{M_1-1}\! G(Y^{n,1}_{k}) - Y^n \\&+ \sum_{j=1}^{P} \frac{b_j}{a_{j+1}}\!\left(Y^{n,j+1}_{0} + \dt \!\sum_{k=0}^{M_{j+1}-1}\!G(Y^{n,j+1}_{k}) - Y^{n,1}_{M_1} \!\right) \,  .
\end{align*}
Substituting $Y^{n,j+1}_{0} = Y^{n,1}_{M_1} + a_{j+1} \Dt\,G(Y^{n,j}_{M_{j}})$ from \eqref{def:Ynj0}, we obtain the explicit formula
\begin{align*}
\tilde{G}(Y^n) =& \dt \sum_{k=0}^{M_1-1} G(Y^{n,1}_{k})  + \sum_{j=1}^{P} \frac{b_j}{a_{j+1}}\left(a_{j+1} \Dt \, G(Y^{n,j}_{M_{j}}) + \dt \sum_{k=0}^{M_{j+1}-1}G(Y^{n,j+1}_{k})  \right) \; \; .
\end{align*}
Substituting into \eqref{rk_LE} with $Y^n =Y(t^n)+\theta(y^n-Y(t^n))$, taking absolute values and employing Assumptions ({\it A1})--({\it A2}) on the Lipshitz constant $L_G$ of the reduced dynamics yields 
\begin{align*}
 |\LE^n| &\le \int_0^1 \! \left| \D \tilde{G}\big(Y(t^n)+\theta(y^n-Y(t^n))\big) \right| \, d\theta \\
 &\le \int_0^1   \dt \!\!\sum_{k=0}^{M_1-1} \left| \D G(Y^{n,1}_{k}) \right|  + \sum_{j=1}^{P} \frac{b_j}{a_{j+1}} a_{j+1} \Dt \, \left| \D G(Y^{n,j}_{M_{j}})\right| \\
 &+ \dt \sum_{k=0}^{M_{j+1}-1} \left|\D G(Y^{n,j+1}_{k})  \right|  \, d\theta \\
 &\le \int_0^1 \! \left(  L_G M_1 \dt   + \sum_{j=1}^{P} \frac{b_j}{a_{j+1}}\left( L_G a_{j+1} \Dt+ L_G M_{j+1} \dt  \right) \right) \, d\theta \;\; .
\end{align*}
Recalling $M_j = a_j M$ for $j>1$, and using the weighting condition $\textstyle\sum_{j=1}^P b_j =1$, we obtain the bound 
\begin{align*}
 |\LE^n| &\le L_G \left( \td + M_1 \dt \right) \;\; .
\end{align*}
We substitute this bound into \eqref{rk.int}, with 
\begin{align*}
|E^n_d| \le & \left(1 + L_G(\td+M_1\dt)\right)|E^{n-1}_d| + \left|\gt(x^{n-1},y^{n-1}) -\Gt(y^{n-1})\right|  \\&+ C_P^*\td^{P+1} + C_2^*\td M \dt \;\;.
\end{align*}
Iterating the recursive relationship with $E^0_d=0$ yields
\begin{align*}
|E^n_d| \le &\sum_{m=0}^{n-1} \!\!\left(1 + L_G(\td+M_1\dt)\right)^m \!\! \Bigg(\!\!C_P^*\td^{P+1} + C_2^*\td M \dt \\
&\qquad\qquad\qquad\qquad\qquad\qquad+\max_{0\le i \le n-1}\frac{\!|\gt(x^i,y^i) -\Gt(y^i)|}{\,\td} \!\! \Bigg) \\
\le & \frac{e^{L_G\,t^n}}{L_G} \left(  C_P^*\td^{P} + C_2^* M \dt + \max_{0\le i \le n-1}\frac{\!|\gt(x^i,y^i) -\Gt(y^i)|}{\,\td}  \right)  \;\;,
\end{align*}
where $t^n=n(\td+M_1\dt)$.
\end{proof}

Lemma~\ref{lemma_rk} establishes that the error between the PI2 approximation of the slow variables of the full multiscale system \eqref{baseneo} and the true reduced dynamics \eqref{e.CMT} contains a term proportional to the order of the macrosolver $\td^P$, a term proportional to the length of the microsolver $M\dt$ and an additional term proportional to $\max_{0\le i \le n-1}|\gt(x^{n-1},y^{n-1}) -\Gt(y^{n-1})|$. The latter term measures the difference between the PI2 vector field $\tilde{g}$ of the slow variables, and the auxiliary vector field $\tilde{G}$ initialised at the same point $y^{n-1}$. In order to bound this term we define the deviation of the PI approximation of the fast variables from the approximate slow manifold over the increments,
\begin{align}
d^{n,j}_{m} = x^{n,j}_{m}-h_0(y^{n,j}_{m})\; ,
\end{align}
for $j=1,2,\dots,P+1$, $m=1,2,\dots,M_j$. The following Lemma bounds $|d^{n,j}_m|$.
\medskip
\begin{lemma}
\label{lemma_dnm}
Given Assumptions ({\it A1}) and ({\it A4}), the error between the fast variables and the approximate slow manifold during the application of a microsolver of order p is bounded for all $0\leq m\leq M_j$ by
\begin{align*}
|d^{n,j}_{m}|\le 
\rho^m\left(-\frac{ \dt}{\varepsilon}\right) |d^{n,j}_{0}|  + L_h C_g \varepsilon 
\;\; ,
\end{align*}
\end{lemma}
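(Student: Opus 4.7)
The proof plan is to derive a one-step recursion for $|d^{n,j}_m|$ by splitting the new deviation into a contribution from the fast dynamics (which contracts) and a contribution from the slow drift of $y$, then to iterate and sum the resulting geometric series.

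First I would write
\[
d^{n,j}_m = \big(x^{n,j}_m - h_0(y^{n,j}_{m-1})\big) + \big(h_0(y^{n,j}_{m-1}) - h_0(y^{n,j}_m)\big).
\]
For the first summand I exploit the linearity of the fast vector field in $x$: with $y$ held at its value $y^{n,j}_{m-1}$ at the start of the microstep, the fast equation reads $\dot x = -(\Lambda/\eps)(x - h_0(y^{n,j}_{m-1}))$, and any explicit Runge-Kutta microsolver of order $p$ therefore acts on the shifted variable $x - h_0(y^{n,j}_{m-1})$ by exact multiplication by the matrix $\rho(-\Lambda\dt/\eps)$ from \eqref{linamp}. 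Taking absolute values componentwise and using $\min_i\lambda_{ii}=1$ together with monotonicity of $\rho$ yields $|x^{n,j}_m - h_0(y^{n,j}_{m-1})| \le \rho(-\dt/\eps)|d^{n,j}_{m-1}|$. For the second summand, Assumptions A1 and A4 combine to give $|h_0(y^{n,j}_m)-h_0(y^{n,j}_{m-1})| \le L_h|y^{n,j}_m - y^{n,j}_{m-1}| \le L_h C_g \dt$, since one microstep can change $y$ by at most $C_g\dt$.

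Combining these estimates produces the recursion $|d^{n,j}_m| \le \rho(-\dt/\eps)|d^{n,j}_{m-1}| + L_h C_g \dt$. Iterating from $m=0$ and summing the resulting geometric series gives
\[
|d^{n,j}_m| \le \rho^m\!\left(-\frac{\dt}{\eps}\right)|d^{n,j}_0| + L_h C_g \dt\sum_{k=0}^{m-1}\rho^k\!\left(-\frac{\dt}{\eps}\right).
\]
The final step uses the Taylor expansion $1-\rho(-\dt/\eps) = \dt/\eps + O((\dt/\eps)^2)$ of the amplification polynomial, so $\sum_{k\ge 0}\rho^k(-\dt/\eps) \le \eps/\dt$ to leading order, and the trailing factor $L_h C_g \dt$ is promoted to $L_h C_g \eps$ as stated.

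The main obstacle is to justify cleanly that, for $p>1$, the fast part of the microsolver really does give the clean amplification factor $\rho(-\Lambda\dt/\eps)$: within a single microstep the Runge-Kutta stages update $y$ in parallel with $x$, so the forcing $h_0(y)$ seen by the fast equation is not strictly constant across stages. However, intra-step variations of $y$ are of size $O(\dt)$ by Assumption A4, and their effect on $x^{n,j}_m - h_0(y^{n,j}_{m-1})$ enters at the same order $L_h C_g \dt$ as the drift term already accounted for above, so they can be absorbed into the constants without altering the structure of the recursion.
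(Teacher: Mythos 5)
Your proposal is correct and follows essentially the same route as the paper: freeze the slow forcing over a microstep so the microsolver acts on the deviation as the linear amplification $\rho(-\Lambda\dt/\eps)$, add the slow-drift contribution $L_hC_g\dt$ via ({\it A1}), ({\it A4}), iterate the resulting recursion and sum the geometric series using $1-\rho(-\dt/\eps)\approx\dt/\eps$. The only difference is bookkeeping: the paper makes your final "obstacle" paragraph precise by comparing the nonlinear fast increments with those of the frozen linear system stage by stage, showing the intra-step variation of $y$ contributes an explicit $\mathcal{O}(\dt^2/\eps)$ per microstep, which after division by $1-\rho$ yields the same absorbed $\mathcal{O}(\dt)$ remainder you invoke.
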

where 
\begin{align*}
\rho\left(-\frac{\dt}{\varepsilon}\right)=\sum_{k=0}^p \frac{\left(-\frac{\dt}{\varepsilon} \right)^k}{k!} \;\;.
\end{align*}
The first term in the Lemma is a manifestation of the attraction of the fast variables towards the slow manifold along their stable eigendirection. The second term proportional to $\eps$ describes, as we will see below, the cumulative drift of the slow variables $y$ during the microsteps causing a departure from the slow manifold for nonconstant $h_0(y)$.

\noindent
To ensure convergence of the fast variables to the approximate slow manifold we require
\begin{align*}
0\le \rho\left(-\frac{ \dt}{\eps}\right) < 1 \;\; .
\end{align*}


\begin{proof}
Denote the increments of the microsolver as $\bar{k}_{x,i}$ and $\bar{k}_{y,i}$ for the fast and slow variables respectively, with nodes $a_i'$ and weights $b_i'$. Employing the fast vector field \eqref{basefast} we write the fast increments $\bar{k}_{x,i}$ analogously to \eqref{rk.k} as
\begin{align}
\nonumber \bar{k}_{x,i}(x^{n,j}_{m}, y^{n,j}_{m}) =& -\frac{\Lambda \dt }{\eps} \left( x^{n,j}_{m} + a'_i\bar{k}_{x,i-1}\right) + \frac{\Lambda \dt }{\eps}h_0(y^{n,j}_{m} + a'_i\bar{k}_{y,i-1})\\
\nonumber=& -\frac{\Lambda \dt }{\eps} \left( x^{n,j}_{m} - h_0(y^{n,j}_{m}) + a'_i\bar{k}_{x,i-1}\right) + \mathcal{O}\left(\frac{\dt^2}{\eps}\right)\\
\label{k.exp}=& -\frac{\Lambda \dt }{\eps} \left( d^{n,j}_{m} + a'_i\bar{k}_{x,i-1}\right) + \mathcal{O}\left(\frac{\dt^2}{\eps}\right)\;\;,
\end{align}
where we have used that $\bar{k}_{y,i} = \O(\dt)$. Introducing the increment associated with a linear system $\bar{k}_{\text{lin},i}(d^{n,j}_{m})=-{\Lambda \dt }{} \left( d^{n,j}_{m} + a'_i\bar{k}_{\text{lin},i-1}\right)/\eps$, we write
\begin{align*}
\bar{k}_{x,i}(x^{n,j}_{m}, y^{n,j}_{m}) -  \bar{k}_{\text{lin},i}(d^{n,j}_{m}) =& -\frac{\Lambda \dt }{\eps} a'_i \left( \bar{k}_{x,i-1} - \bar{k}_{\text{lin},i-1}\right) + \mathcal{O}\left(\frac{\dt^2}{\eps}\right)\\
=&\;\, \mathcal{O}\left(\frac{\dt^2}{\eps}\right) \;\;.
\end{align*}
The linear component $\bar{k}_{\text{lin},i}$ of the increment can be interpreted as the increment of the microsolver applied to the linear system $\dot d_\eps = -\Lambda d_\eps/\eps$ with initial condition $d^{n,j}_m$. Therefore, as discussed in Section~\ref{sec:rk}, a microstep taken with the linear increments $\bar{k}_{\text{lin},i}$ can be written as
\begin{align*}
d^{n,j}_{m} + \sum_{i=1}^p b_i' \bar{k}_{\text{lin},i}(d^{n,j}_{m}) =& \rho\left(-\frac{\Lambda\dt}{\eps}\right)d^{n,j}_{m} \;.
\end{align*}
Employing \eqref{rk.m} the microstep is expressed as
\begin{align*}
 x^{n,j}_{m+1} =& x^{n,j}_{m} + \sum_{i=1}^p b_i' \bar{k}_{x,i}(x^{n,j}_{m}, y^{n,j}_{m})  \\
 =& x^{n,j}_{m} + \sum_{i=1}^p b_i' \bar{k}_{\text{lin},i}(d^{n,j}_{m}) + \mathcal{O}\left(\frac{\dt^2}{\eps}\right)  \\
 =& d^{n,j}_{m} + \sum_{i=1}^p b_i' \bar{k}_{\text{lin},i}(d^{n,j}_{m}) + h_0(y^{n,j}_{m} )+ \mathcal{O}\left(\frac{\dt^2}{\eps}\right) \\
 =& \rho\left(-\frac{\Lambda\dt}{\eps}\right) d^{n,j}_{m} + h_0(y^{n,j}_{m} ) + \mathcal{O}\left(\frac{\dt^2}{\eps}\right) \;\;.
\end{align*}



Then
\begin{align}
\nonumber \left|d^{n,j}_{m+1} \right|=& \left|x^{n,j}_{m+1}- h_0(y^{n,j}_{m+1})\right| \\
\nonumber \le& \left|\rho\left(-\frac{\Lambda\dt}{\eps}\right)\right| \left|d^{n,j}_{m}\right| + \left|h_0(y^{n,j}_{m} )- h_0(y^{n,j}_{m+1})\right| + \mathcal{O}\left(\frac{\dt^2}{\eps}\right) \\
\label{micro.r}  \le& \left|\rho\left(-\frac{\Lambda\dt}{\eps}\right)\right| \left|d^{n,j}_{m}\right| + L_h C_g \dt + \mathcal{O}\left(\frac{\dt^2}{\eps}\right) \;\; .
\end{align}
The first term in this bound represents the rate of convergence of the fast variables to the approximate slow manifold; for stability we require $\left|\rho\left(-\frac{\Lambda\dt}{\eps}\right)\right| < 1$. The term $L_h C_g \dt$ stems from the drift in the slow variables over a microstep. The slowest rate of convergence to the slow manifold is given by $\min(\lambda_{ii})=1$, so we obtain
 \begin{align*}
 \left|\rho\left(-\frac{\Lambda\dt}{\eps}\right)\right| \le & \rho\left(-\frac{\dt}{\eps}\right) \\
 =& 1 - \frac{\dt}{\eps} + \frac{1}{2}\left(-\frac{\dt}{\eps}\right)^2 + \dots + \frac{1}{p!}\left(-\frac{\dt}{\eps}\right)^p \\
 <& 1 \;\; .
 \end{align*} 
Iterating \eqref{micro.r} then yields
\begin{align*}
 \left|d^{n,j}_{m} \right|  \le& \rho^m\left(-\frac{\dt}{\eps}\right) \left|d^{n,j}_{m}\right| + \frac{L_h C_g \dt + \mathcal{O}\left(\frac{\dt^2}{\eps}\right) }{1-\left|\rho\left(-\frac{\dt}{\eps}\right)\right|} \\
  \le& \rho^m\left(-\frac{\dt}{\eps}\right) \left|d^{n,j}_{m}\right| + L_h C_g \eps + \mathcal{O}\left(\dt\right)\;\; ,
 \end{align*}
completing the proof of the Lemma.
 
\end{proof}
\begin{remark}
In the above Lemma we Taylor expand the terms in $h_0(y^{n,j}_{m} + a'_i\bar{k}_{y,i-1})$ up to $\mathcal{O}(\dt^2/\eps)$. However, higher-order terms may improve the error bound. For instance, one can show that for a fourth-order Runge-Kutta microsolver,
\begin{align*}
 \left|d^{n,j}_{m+1} \right| \le& \rho\left(-\frac{\dt}{\eps}\right) \left|d^{n,j}_{m}\right| + \frac{9}{24}L_h C_g \dt \;\; .
\end{align*}
\end{remark}
\begin{remark}
Optimal convergence of the fast variables to the approximate slow manifold during the application of the microsolver is given by a forward Euler microsolver with $0 < \dt \le \frac{2\eps}{\lambda+1}$, where
\begin{align*}
\rho\left(-\frac{ \dt}{\eps}\right) =& \; 1-\frac{\dt}{\eps}\;\; ,
\end{align*}
and the convergence rate $\rho^m\left(-{ \dt}/{\eps}\right)$ is bounded above by the exponential convergence $\exp{(-{m\dt}/{\eps})}$. The full stability region for the Euler microsolver is $0 < \dt < {2\eps}/{\lambda}$; for further details, see \cite{GottwaldMaclean13}.
\end{remark}

\medskip
\noindent
From Lemma~\ref{lemma_dnm} it follows that the rate of convergence of the fast variables to the approximate slow manifold is optimal for an Euler microsolver, and in Lemma~\ref{lemma_rk} we demonstrated that the dominant error terms between the PI2 approximation $y^n$ and the true reduced dynamics $Y(t^n)$ do not depend on the order p of the microsolver. We therefore choose as the microsolver for the PI2 scheme the forward Euler method to simplify the calculations, and write
\begin{align}
\label{ey} y^{n,j}_{m} &= y^{n,j}_{m-1} + \dt\, g(x^{n,j}_{m-1},y^{n,j}_{m-1}) \;\;, \\
\label{ex} x^{n,j}_{m} &= x^{n,j}_{m-1} + \dt\, f(x^{n,j}_{m-1},y^{n,j}_{m-1}) \;\;,
\end{align}

\noindent
We now bound the distance of the PI approximation $y^{n,j}_{m}$ of the slow variables over the microsteps of the full system \eqref{baseneo}, from $Y^{n,j}_{m}$, the PI approximation of the reduced dynamics over the microsteps. 

\medskip
\begin{lemma} Assuming ({\it A1}),({\it A2}) and ({\it A6}), the PI2 numerical estimate $y^{n,j}_{M_j}$ of the slow variable after the application of the microsolver at the $j$-th increment is close to the numerical estimate $Y^{n,j}_{M_j}$ of the reduced slow variable which was initialized at $Y^{n,1}_{0}=y^n$, with
\begin{align*}
|y^{n,j}_{M_j} - Y^{n,j}_{M_j}| \leq &
2 L_g \left( 3\eps + a_j \Dt \left(1-\frac{\dt}{\eps}\right)^{aM} \right) \!\! \max_{1\le k \le j} \left|d^{n,k}_{0} \right| + 2  L_g L_h C_g a_j \td \eps  \\
&+ \mathcal{O}\left(\left(\Dt\eps + \Dt^2 (1-\frac{\dt}{\eps})^{aM}\right) \max_{1\le k \le j} \left|d^{n,k}_{0} \right|,  \td^2 \eps , M\dt\eps,M_1\dt \eps \right)  \;\; , 
\end{align*}
for $2\le j \le P$, where $aM = \min_{j>1} M_j$, and
\begin{align*}
|y^{n,1}_{M_1} - Y^{n,1}_{M_1}| \leq &
2L_g \eps\left|d^{n,1}_{0}\right| + \O( M_1 \dt\, \eps) \;\;,
\end{align*}
for $j=1$.
\label{phi.bound}
\end{lemma}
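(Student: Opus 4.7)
The plan is to exploit the explicit forward Euler representations of both microsolvers \eqref{ey}--\eqref{ex} and \eqref{eY}, so that $y^{n,j}_m - Y^{n,j}_m$ obeys a simple one-step recursion driven by the vector-field discrepancy $g(x^{n,j}_{m-1},y^{n,j}_{m-1}) - G(Y^{n,j}_{m-1})$. The central algebraic step is the decomposition
\[
g(x,y) - G(Y) = \bigl[g(x,y)-g(h_0(y),y)\bigr] + \bigl[g(h_0(y),y)-g(h_\eps(Y),Y)\bigr],
\]
which, via ({\it A1})--({\it A2}) and $h_\eps = h_0 + \mathcal{O}(\eps)$, splits the discrepancy into a contribution bounded by $L_g|d^{n,j}_{m-1}|$, a contribution bounded by $L_g(1+L_h)|y^{n,j}_{m-1}-Y^{n,j}_{m-1}|$, and an $\mathcal{O}(\eps)$ remainder from the $h_\eps-h_0$ expansion.

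First I would treat the base case $j=1$, where $y^{n,1}_0 = Y^{n,1}_0 = y^n$ so the recursion is seeded at zero. Iterating $M_1$ Euler steps and using ({\it A6}) to bound $(1+\dt L_g(1+L_h))^{M_1}\le e^{L_G M_1\dt}<e^{1/2}<2$ produces a weighted sum over $\dt|d^{n,1}_k|$. Substituting the Lemma~\ref{lemma_dnm} bound $|d^{n,1}_k|\le (1-\dt/\eps)^k|d^{n,1}_0| + L_h C_g\eps$ and evaluating the geometric series $\sum_{k=0}^{M_1-1}\dt(1-\dt/\eps)^k = \eps(1-(1-\dt/\eps)^{M_1})\le \eps$, the dominant contribution becomes $2L_g\eps|d^{n,1}_0|$, with the drift term giving the $\mathcal{O}(M_1\dt\eps)$ remainder.

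Next I would treat $j>1$ by induction on $j$. The definition \eqref{def:2znj0} gives
\[
y^{n,j}_0 - Y^{n,j}_0 = (y^{n,1}_{M_1}-Y^{n,1}_{M_1}) + a_j\Dt\bigl[g(x^{n,j-1}_{M_{j-1}},y^{n,j-1}_{M_{j-1}}) - G(Y^{n,j-1}_{M_{j-1}})\bigr].
\]
Applying the same $g-G$ decomposition to the bracket, invoking Lemma~\ref{lemma_dnm} with $M_{j-1}\ge aM$ so that $|d^{n,j-1}_{M_{j-1}}|\le (1-\dt/\eps)^{aM}|d^{n,j-1}_0| + L_h C_g\eps$, and substituting the inductive bound on $|y^{n,j-1}_{M_{j-1}}-Y^{n,j-1}_{M_{j-1}}|$, I obtain an initial-condition bound of the advertised form. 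Running the same Euler iteration as in the base case over $M_j$ further microsteps then contributes an additional $2L_g\eps|d^{n,j}_0|$ plus $\mathcal{O}(M_j\dt\eps)$. After enlarging the maxima over $|d^{n,k}_0|$ to $k\le j$ and absorbing constants, the statement follows: the prefactor $2L_g$ encodes the factor-of-two amplification enforced by ({\it A6}), and the $3\eps$ coefficient emerges because the $\eps|d|$ terms coming from the initial-condition step and from the current-increment Euler step both feed the same quantity after amplification.

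The main obstacle I expect is the bookkeeping: keeping the geometric contraction $(1-\dt/\eps)^{aM}$ attached exclusively to the $a_j\Dt$ prefactor (time spent away from the slow manifold over a macro-increment) rather than to the $\eps$ prefactor, while arranging the induction in $j$ so that the constants do not compound. Assumption ({\it A6}) is precisely what keeps every Euler-iteration amplification factor below $2$; without it, the inductive closure would fail and the explicit $2L_g$ in the final bound would have to be replaced by a constant growing geometrically in $P$.
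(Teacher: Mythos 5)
Your proposal is correct and follows essentially the same route as the paper: a one-step Euler recursion for $y^{n,j}_m - Y^{n,j}_m$ driven by a Lipschitz decomposition of $g(x,y)-G(Y)$ into a term $L_g|d^{n,j}_m|$ plus $L_G|y^{n,j}_m-Y^{n,j}_m|$ plus an $\mathcal{O}(\eps)$ correction from $h_\eps-h_0$, iterated with the geometric series $\sum_k \dt(1-\dt/\eps)^k\le\eps$ and the factor-of-two amplification from ({\it A6}), then closed by relating $y^{n,j}_0-Y^{n,j}_0$ to the $(j-1)$-th quantities via \eqref{def:2znj0} and \eqref{def:Ynj0} and iterating in $j$. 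Your accounting of where the $3\eps$ and the attached contraction $(1-\dt/\eps)^{aM}$ come from matches the paper's bookkeeping.
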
 
\begin{proof}
Employing the definition of the Euler microsolvers \eqref{ey} for the PI2 scheme and \eqref{eY} for the reduced scheme, and Assumptions ({\it A1})--({\it A2}) on the Lipschitz continuity of the reduced dynamics gives
\begin{align}
\nonumber \left|y^{n,j}_{M_j} - Y^{n,j}_{M_j}\right| \leq& \left|y^{n,j}_{M_j-1} - Y^{n,j}_{M_j-1}\right| + \dt\left|g(x^{n,j}_{M_j-1},y^{n,j}_{M_j-1}) - G(Y^{n,j}_{M_j-1})\right| 
\\
\nonumber \le& \left|y^{n,j}_{M_j-1} - Y^{n,j}_{M_j-1}\right| + \dt\left|g(x^{n,j}_{M_j-1},y^{n,j}_{M_j-1}) - G(y^{n,j}_{M_j-1})\right|\\
\nonumber &+\dt\left|G(y^{n,j}_{M_j-1})-G(Y^{n,j}_{M_j-1})\right| 
\\
\nonumber \le& (1+L_G \dt)\left|y^{n,j}_{M_j-1} - Y^{n,j}_{M_j-1}\right| \\
\nonumber &+ \dt\left|g(x^{n,j}_{M_j-1},y^{n,j}_{M_j-1}) - g(h_\eps(y^{n,j}_{M_j-1}),y^{n,j}_{M_j-1})\right| 
\\
\nonumber \le& (1+L_G \dt)\left|y^{n,j}_{M_j-1} - Y^{n,j}_{M_j-1}\right| + L_g \dt\left|x^{n,j}_{M_j-1}- h_\eps(y^{n,j}_{M_j-1})\right| 
\\
\nonumber=& (1+L_G \dt)\left|y^{n,j}_{M_j-1} - Y^{n,j}_{M_j-1}\right| + L_g \dt\left|d^{n,j}_{M_j-1}\right|
\\
\nonumber &+L_g \dt\left| h_0(y^{n,j}_{M_j-1}) - h_\eps(y^{n,j}_{M_j-1})\right|
\\
\nonumber=& (1+L_G \dt)\left|y^{n,j}_{M_j-1} - Y^{n,j}_{M_j-1}\right| + L_g \dt\left|d^{n,j}_{M_j-1}\right| +L_g L_\eps\eps\,  \dt +\O(\dt\eps^2)\;\;,
\end{align}

\noindent
where we have defined $ h_0(y) - h_\eps(y) = L_\eps \eps + \O(\eps^2)$. Employing Lemma~\ref{lemma_dnm} on $|d^{n,j}_{M_j-1}|$ for a forward Euler microsolver yields the recursive bound
\begin{align*}
\left|y^{n,j}_{M_j} - Y^{n,j}_{M_j}\right| \le& (1+L_G \dt)\left|y^{n,j}_{M_j-1} - Y^{n,j}_{M_j-1}\right| \\
&+ L_g \dt\left(1-\frac{\dt}{\eps}\right)^{M_j-1}\left|d^{n,j}_{0}\right| + L_g (L_h C_g+L_\eps) \eps\, \dt   \;\;,
\end{align*}
which, upon iterating, gives 
\begin{align*}
\left|y^{n,j}_{M_j} - Y^{n,j}_{M_j}\right| \le& (1+L_G \dt)^{M_j}\left|y^{n,j}_{0} - Y^{n,j}_{0}\right| \\
&+ L_g \dt\left|d^{n,j}_{0}\right|\sum_{k=0}^{M_j-1}\left(1-\frac{\dt}{\eps}\right)^{k}(1+L_G\dt)^{M_j-1-k} \\
&+ L_g (L_h C_g+L_\eps)\eps\, \dt\sum_{k=0}^{M_j-1}(1+L_G\dt)^k 
\\
=& (1+L_G \dt)^{M_j}\left|y^{n,j}_{0} - Y^{n,j}_{0}\right| \\
&+ L_g \dt\left|d^{n,j}_{0}\right|\frac{(1+L_G\dt)^{M_j} - \left(1-\frac{\dt}{\eps}\right)^{M_j}}{L_G\dt+\frac{\dt}{\eps}} \\
&+ L_g (L_h C_g+L_\eps) \eps\, \dt \frac{(1+L_G\dt)^{M_j}-1}{L_G\dt} 
\\
\le& e^{L_G M_j \dt} \left|y^{n,j}_{0} - Y^{n,j}_{0}\right| 
+ L_g \eps\left|d^{n,j}_{0}\right|\frac{e^{L_G M_j \dt} }{L_G\eps+1}
\\
& + L_g (L_h C_g+L_\eps) \eps\, \dt \frac{e^{L_GM_j \dt}-1}{L_G\dt} \;\; .
\end{align*}
Realising that $e^{L_G M_j \dt}-1\leq 2 L_G M_j \dt$ under Assumption ({\it A6}), we obtain
\begin{align}
\label{mdiff}
\left|y^{n,j}_{M_j} - Y^{n,j}_{M_j}\right| \le& 2\left|y^{n,j}_{0} - Y^{n,j}_{0}\right| + 2L_g \eps\left|d^{n,j}_{0}\right| + \O(\eps M_j \dt) \;\; .
\end{align}
At $j=1$ we initialize at $y^{n,1}_{0} = Y^{n,1}_{0} = y^n$, obtaining the desired bound
\begin{align}
\label{mdiff_1}
\left|y^{n,1}_{M_1} - Y^{n,1}_{M_1}\right| \le& 2L_g \eps\left|d^{n,1}_{0}\right|  + \O( \eps M_1 \dt)\;\; .
\end{align}
For $j>1$, we have 
\begin{align*}y^{n,j}_{0} =& y^{n,1}_{M_1} + a_j \hat{k}_{y,j-1} \\
=& y^{n,1}_{M_1} + a_j \Dt \,g(x^{n,j-1}_{M_{j-1}},y^{n,j-1}_{M_{j-1}}) \;\; ,
\end{align*} 
using the definitions \eqref{def:2znj0} and \eqref{2hatky}, and 
\begin{align*}
Y^{n,j}_{0} = Y^{n,1}_{M_1} + a_j \Dt\,G(Y^{n,j-1}_{M_{j-1}}) \;\; ,
\end{align*}
using \eqref{def:Ynj0}. Substituting these into \eqref{mdiff} and employing assumptions {\it (A1)}--{\it (A2)}, we obtain
\begin{align*}
\left|y^{n,j}_{M_j} - Y^{n,j}_{M_j}\right| \le& 2\left|y^{n,1}_{M_1} - Y^{n,1}_{M_1}\right| 
+2 a_j \Dt \left|g(x^{n,j-1}_{M_{j-1}},y^{n,j-1}_{M_{j-1}}) - G(Y^{n,j-1}_{M_{j-1}}) \right| \\
&+ 2L_g \eps\left|d^{n,j}_{0}\right| + \O( M \dt \eps)
\\
\le& 4L_g \eps\left|d^{n,1}_{0}\right| 
+2 a_j \Dt \left|g(x^{n,j-1}_{M_{j-1}},y^{n,j-1}_{M_{j-1}}) - G(y^{n,j-1}_{M_{j-1}}) \right| \\
&+2 a_j \Dt \left|G(y^{n,j-1}_{M_{j-1}}) - G(Y^{n,j-1}_{M_{j-1}}) \right| 
+ 2L_g \eps\left|d^{n,j}_{0}\right| + \O( M \dt \eps, M_1 \dt\eps)
\\
\le& 4L_g \eps\left|d^{n,1}_{0}\right|  
+2 L_g a_j \Dt \left| d^{n,j-1}_{M_{j-1}} \right| 
+2 L_G a_j \Dt \left|y^{n,j-1}_{M_{j-1}} - Y^{n,j-1}_{M_{j-1}}\right|
\\
&+ 2L_g \eps\left|d^{n,j}_{0}\right| 
+ \O( M \dt \eps, M_1 \dt\eps)
\\
\le&4L_g \eps\left|d^{n,1}_{0}\right|  
+ 2 L_g a_j \Dt \left(1-\frac{\dt}{\eps}\right)^{M_j} \left|d^{n,j-1}_{0} \right| + 2 L_g L_h C_g a_j \Dt \, \eps 
\\
&+2 L_G a_j \Dt \left|y^{n,j-1}_{M_{j-1}} - Y^{n,j-1}_{M_{j-1}} \right| + 2L_g \eps\left|d^{n,j}_{0}\right|+ \O( M \dt \eps, M_1 \dt\eps) \;\; ,
\end{align*}
where we have employed \eqref{mdiff_1} to bound $\left|y^{n,1}_{M_1} - Y^{n,1}_{M_1}\right|$ and Lemma~\ref{lemma_dnm} to bound $|d^{n,j-1}_{M_{j-1}} |$. Rearranging and taking the maximum over all increments in the terms in $|d^{n,j}_{0}|$ and $|d^{n,1}_{0}|$ produces
\begin{align*}
\left|y^{n,j}_{M_j} - Y^{n,j}_{M_j}\right| \le& 2 L_g \left( 3\eps + a_j \Dt \left(1-\frac{\dt}{\eps}\right)^{aM} \right) \!\! \max_{1\le k \le j} \left|d^{n,k}_{0} \right| + 2 L_g L_h C_g a_j \td \eps  \\
&+2 L_G a_j \Dt \left|y^{n,j-1}_{M_{j-1}} - Y^{n,j-1}_{M_{j-1}} \right| + \O( M \dt \eps, M_1 \dt\eps) \;\;.
\end{align*}
Iterating this relation yields to lowest order
\begin{align*}
\left|y^{n,j}_{M_j} - Y^{n,j}_{M_j}\right| \le& 2 L_g \left( 3\eps + a_j \Dt \left(1-\frac{\dt}{\eps}\right)^{aM} \right) \!\! \max_{1\le k \le j} \left|d^{n,k}_{0} \right| + 2 L_g L_h C_g a_j \td \eps  \\
&+ \mathcal{O}\left(\left(\Dt\eps + \Dt^2 (1-\frac{\dt}{\eps})^{aM}\right) \max_{1\le k \le j} \left|d^{n,k}_{0} \right|,  \td^2 \eps , M\dt\eps, M_1\dt \eps \right)  \;\; .
\end{align*}
\end{proof} 

\noindent
Lemma~\ref{phi.bound} provides bounds on the difference between solutions of the PI approximation of the slow variables in the full multiscale system and those of the PI approximation of the reduced system during the application of the microsolver. We use this result to bound the difference between the vectorfield of the PI2 method $\gt$ given by \eqref{def:gtwiddle} and the auxiliary vectorfield $\Gt$ given by \eqref{def:Gtwiddle}.

\medskip
\begin{lemma} \label{g-G}
Assuming ({\it A1})-({\it A6}), the auxiliary vectorfield $\tilde{G}$ is close to the vectorfield $\tilde{g}$ with
\begin{align*}
|\tilde{g}(x^n,y^n)-\tilde{G}(y^n)| \leq& 
2 L_g \left(\frac{5}{a} \eps + \Dt \left(1-\frac{\dt}{\eps}\right)^{aM} \!  \right)\max_{1\le k \le P+1}|d^{n,k}_{0}| + 2 L_g L_h C_g \td \eps \\
&+\mathcal{O}(M_1 \dt \eps)\;\;,
\end{align*}
\end{lemma}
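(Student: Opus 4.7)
The plan is to reduce the claimed bound to a weighted sum of terms already controlled by Lemma~\ref{phi.bound}. First I would substitute the definitions \eqref{def:gtwiddle} and \eqref{def:Gtwiddle} of $\gt$ and $\Gt$, together with the increment definitions \eqref{def:ky} and \eqref{def:K}. Since the auxiliary vectorfield is evaluated at $Y^n = y^n$, the initial $(y^n - Y^n)$ contribution vanishes, and after rearranging, the difference collapses into the telescoping combination
\begin{align*}
\gt(x^n,y^n) - \Gt(y^n) = \Bigl(1 - \sum_{j=1}^P \frac{b_j}{a_{j+1}}\Bigr)\bigl(y^{n,1}_{M_1} - Y^{n,1}_{M_1}\bigr) + \sum_{j=1}^P \frac{b_j}{a_{j+1}} \bigl(y^{n,j+1}_{M_{j+1}} - Y^{n,j+1}_{M_{j+1}}\bigr).
\end{align*}
This is the key structural identity on which everything else hinges.

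Next, I would take absolute values and apply Lemma~\ref{phi.bound} term by term. The $j=1$ case of Lemma~\ref{phi.bound} controls $|y^{n,1}_{M_1} - Y^{n,1}_{M_1}|$ by $2L_g\eps|d^{n,1}_0| + \mathcal{O}(M_1\dt\eps)$, while the general case controls each of the $P$ remaining differences $|y^{n,j+1}_{M_{j+1}} - Y^{n,j+1}_{M_{j+1}}|$ by $2L_g(3\eps + a_{j+1}\Dt(1-\dt/\eps)^{aM})\max_k|d^{n,k}_0| + 2L_g L_h C_g a_{j+1} \td \eps$ plus higher-order remainders. The bookkeeping then proceeds by substituting these into the identity above.

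The final step is collecting like terms using the weighting condition $\sum_{j=1}^P b_j = 1$ together with the bound $1/a_{j+1} \le 1/a$ for $j \ge 1$, where $a = \min_{j>1}a_j$. Three simplifications make the coefficients land exactly as claimed: the $a_{j+1}\Dt(1-\dt/\eps)^{aM}$ contributions sum to a single $\Dt(1-\dt/\eps)^{aM}\max_k|d^{n,k}_0|$ factor (the $a_{j+1}$ cancels the $1/a_{j+1}$); the $3\eps$ contributions aggregate to at most $(3\eps/a)\max_k|d^{n,k}_0|$; and the $a_{j+1}\td\eps$ contributions combine to $\td\eps$. Adding the $j=1$ contribution, which contributes a further $(1+1/a)\cdot 2L_g\eps|d^{n,1}_0|$ bounded by $4L_g\eps\max_k|d^{n,k}_0|/a$ (since $a\le 1$), yields the stated prefactor $2L_g(5\eps/a)$ on $\max_k|d^{n,k}_0|$, and the claimed constant $2L_g L_h C_g \td\eps$.

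The main obstacle is not conceptual but combinatorial: keeping track of which of the $1/a_{j+1}$ factors cancel against $a_{j+1}$ prefactors coming from Lemma~\ref{phi.bound}, and verifying that every surviving $\eps$-term collects into either a $\max_k|d^{n,k}_0|$-dependent group or into the $\td\eps$ remainder, with the $\mathcal{O}(M_1\dt\eps)$ and higher-order terms from Lemma~\ref{phi.bound} absorbed into the error notation. No new inequalities are needed beyond the triangle inequality, the bound $\sum b_j = 1$, and Lemma~\ref{phi.bound} itself.
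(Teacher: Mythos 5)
Your proposal is correct and follows essentially the same route as the paper: the same telescoping identity for $\tilde{g}(x^n,y^n)-\tilde{G}(y^n)$, the same term-by-term application of Lemma~\ref{phi.bound}, and the same bookkeeping via $\sum_j b_j=1$ and $1/a_{j+1}\le 1/a$. The only (harmless) difference is that you bound the coefficient of $y^{n,1}_{M_1}-Y^{n,1}_{M_1}$ by $1+1/a\le 2/a$ rather than the sharper $1/a-1$ used in the paper, which still lands within the stated $5/a$ constant.
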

\begin{proof}
\noindent
We write
\begin{align*}
 |\tilde{g}(x^n,y^n)-\tilde{G}(y^n)| =& \Big|y^{n,1}_{M_1} - Y^{n,1}_{M_1} + \sum_{j=1}^P\frac{b_j}{a_{j+1}}\left(y^{n,j+1}_{M_{j+1}} - y^{n,1}_{M_1} - (Y^{n,j+1}_{M_{j+1}} - Y^{n,1}_{M_1})\right) \Big| \\
 \le& \left(\frac{1}{a}-1\right)\left|y^{n,1}_{M_1} - Y^{n,1}_{M_1} \right| +  \sum_{j=1}^P\frac{b_j}{a_{j+1}}\left|y^{n,j+1}_{M_{j+1}} - Y^{n,j+1}_{M_{j+1}} \right|\;\; ,
 \end{align*}
 where we have used that $1 < 1/a_{j} < 1/a$ for $j>1$. Employing Lemma~\ref{phi.bound} yields
 \begin{align*}
 |\tilde{g}(x^n,y^n)-\tilde{G}(y^n)|\le&(\frac{1}{a}-1)\left(2L_g \eps\left|d^{n,1}_{0}\right| + 2 L_g L_h C_g  M_1 \dt \eps\right) \\
 & +  \sum_{j=1}^P b_j \Bigg( 2 L_g \left( \frac{3}{a_{j+1}}\eps + \Dt \left(1-\frac{\dt}{\eps}\right)^{aM} \right) \!\! \max_{1\le k \le j} \left|d^{n,k}_{0} \right| \\
 &\hspace{1.5cm}+ 2 L_g L_h C_g  \td \eps \Bigg)
\\
\le&2 L_g \left((\frac{4}{a} - 1)\eps + \Dt \left(1-\frac{\dt}{\eps}\right)^{aM} \right)\max_{1\le k \le P+1}|d^{n,k}_{0}| \\
&+ 2 L_g L_h C_g \td \eps  + 2 (1-\frac{1}{a}) L_g L_h C_g  M_1 \dt \eps \;\; .
\end{align*}

\end{proof}
\begin{remark} \label{PI1.p2}
In order to prove the above result for the PI1 method, one constructs $\Gt$ with the increments given by vector field evaluations as discussed in Remark~\ref{PI1.p1}. The Lemma then follows along the same lines as the proofs in \cite{E03,GottwaldMaclean13}.
\end{remark}

\medskip
\noindent 
We are now in the position to establish the bound on the discretization error
\begin{align*}
|E_d^n| = |y^n-Y(t^n)|\; ,
\end{align*}
which we formulate in the following theorem.
\medskip
\begin{theorem}
\label{theorem.Ed}
Given assumptions ({\it A1})--({\it A6}), there exists a constant $C$ such that on a fixed time interval $T$, for each $n\Dt\le T$, the error between the solution of the projective integration scheme PI2 and the exact solutions of the reduced system is bounded by
\begin{align*}
\left| E_d^n \right| \le  
C\left(
\td^P + M\dt + \left(\frac{\eps}{\td}+ \, e^{-\frac{aM\dt}{\varepsilon}} \right) |d^n_{\max}|
 +  \varepsilon
\right)  \;\; ,
\end{align*}
where $\left|d^n_{\max}\right| := \max_{\substack{0 \le i <n\;, \\ 1 \le k \le P+1}} |d^{i,k}_{0}| $ is the maximal deviation of the fast variables from the approximate slow manifold over the increments and macrosteps. 
\end{theorem}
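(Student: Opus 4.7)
The plan is to combine Lemma~\ref{lemma_rk} with Lemma~\ref{g-G} in a direct bookkeeping step. First I would write down the bound from Lemma~\ref{lemma_rk},
\[
|E_d^n| \le \frac{e^{L_G t^n}}{L_G}\left\{C_P^*\,\td^P + C_2^*\, M\dt + \max_{0\le i\le n-1}\frac{|\tilde{g}(x^i,y^i)-\tilde{G}(y^i)|}{\td}\right\},
\]
which already accounts for the $\td^P$ and $M\dt$ terms in the target estimate, so the work reduces to estimating $|\tilde g - \tilde G|/\td$.

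Next I would substitute the bound from Lemma~\ref{g-G} and divide by $\td$. The contribution $2L_g(5/a)\eps\,\max_k|d^{n,k}_0|$ becomes, after division, an $\mathcal{O}(\eps/\td)\,\max_k|d^{n,k}_0|$ term. The contribution $2L_g\Dt\bigl(1-\dt/\eps\bigr)^{aM}\max_k|d^{n,k}_0|$ becomes, using $\Dt\le\td$ (a consequence of Assumption~({\it A6})) and the standard inequality $\bigl(1-\dt/\eps\bigr)^{aM}\le \exp(-aM\dt/\eps)$, an $\mathcal{O}\bigl(e^{-aM\dt/\eps}\bigr)\,\max_k|d^{n,k}_0|$ term. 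The term $2L_g L_h C_g\,\td\,\eps$ reduces to $\mathcal{O}(\eps)$ upon division, and the remainder $\mathcal{O}(M_1\dt\,\eps)$ reduces to $\mathcal{O}(\eps)$ upon using $M_1\dt\le\td$. Taking a further maximum over $0\le i\le n-1$ replaces $\max_{1\le k\le P+1}|d^{n,k}_0|$ with the global quantity $|d^n_{\max}|$ as defined in the theorem statement.

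Finally I would observe that on the fixed interval $t^n\le T$ the prefactor $e^{L_G t^n}/L_G$ is uniformly bounded, so absorbing $L_G$, $L_g$, $L_h$, $C_g$, $C_P^*$, $C_2^*$, $a$, and the interval bound $e^{L_G T}$ into a single constant $C$ yields exactly
\[
|E_d^n|\le C\left(\td^P + M\dt + \left(\frac{\eps}{\td}+e^{-aM\dt/\eps}\right)|d^n_{\max}| + \eps\right),
\]
which is the desired bound.

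The main obstacle is essentially clerical: tracking which of the several error pieces produced by Lemma~\ref{g-G} lands in which of the four slots ($\td^P$, $M\dt$, the $|d^n_{\max}|$ coefficient, or $\eps$) in the final statement, and ensuring that the replacement $(1-\dt/\eps)^{aM}\le e^{-aM\dt/\eps}$ is legitimate in the parameter regime dictated by Assumption~({\it A6}). No further nontrivial estimation is required, since Lemmas~\ref{lemma_rk} and~\ref{g-G} already contain all of the analytical content.
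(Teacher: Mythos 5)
Your proposal is correct and follows essentially the same route as the paper: the paper's proof is exactly the combination of Lemma~\ref{lemma_rk} with Lemma~\ref{g-G}, division by $\td$, the bound $(1-\dt/\eps)^{aM}\le e^{-aM\dt/\eps}$, and absorption of all constants and the prefactor $e^{L_G t^n}/L_G$ into $C$ on the fixed interval. (Minor point: $\Dt\le\td$ follows from the definition $\td=\Dt+M\dt$ rather than from Assumption~({\it A6}).)
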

\medskip

\begin{proof} 
Combining the bound on the discretization error obtained in Proposition~\ref{lemma_Gtwiddle}, 
\begin{align*}
|E^n_d| \le& \frac{e^{L_G t^n}}{L_G} 
 \Bigg\{  
C_P^* \td^P + C_2^*M\dt
 + \max_{0\le i \le n-1}\left|\frac{\tilde{g}(x^i,y^i)-\tilde{G}(x^i,y^i)}{\td}\right|\Bigg\} \; ,
\end{align*}
with Lemma~\ref{g-G} we obtain
\begin{align*}
|E^n_d| 
 \le& \frac{e^{L_G t^n}}{L_G} 
 \Bigg\{  
C_P^* \td^P + C_2^*M\dt
 + 2 L_g \left(\frac{5}{a}\frac{\eps}{\td} + e^{-\frac{aM\dt}{\varepsilon}} \right)|d^{n}_{\max}| + 2 L_g L_h C_g  \eps\Bigg\} \;\; .
\end{align*}
\end{proof}

\medskip
\noindent

\noindent
Theorem~\ref{theorem.main} now follows from Theorems~\ref{theorem.Ec}~and~\ref{theorem.Ed}.\\
\begin{remark}
Following the comments in Remarks~\ref{PI1.p1}~and~\ref{PI1.p2}, one can obtain the same bound for the PI1 method as obtained in Theorem~\ref{theorem.Ed} for the PI2 method.\\
\end{remark}

\noindent
Besides the parameters used in the numerical scheme, i.e. the macrostep size $\td$, the number of microsteps $M$ with microstep size $\dt$, and the time scale parameter $\eps$, the error bound also involves the maximal deviation of the fast variables from the approximate slow manifold $|d^n_{\max}|$. \\

\noindent
We now establish Theorem~\ref{theorem.sec} by bounding the distance of the fast variables from the slow manifold over the increments and macrosteps in the PI1 and PI2 schemes. This provides stability conditions for the fast variables in the PI1 and PI2 schemes, i.e. conditions under which $|d^n_{\max}|$ is finite. These stability conditions are crucial for the successful application of the seamless PI methods, since - as we shall see - the fast variables depart from the slow manifold at rate proportional to $\lambda \Dt/\eps \gg 1$ over the increments and macrosteps. 

\subsection{Stability of the fast variables}
We first bound the distance of the fast variables from the slow manifold over the PI1 increments $\hat{k}_j$, which are employed in both PI1 and PI2.


\medskip
\begin{lemma}
\label{reinit.incr}
Given assumptions {\it (A1)} and {\it (A4)}, the distance of the fast variables from the approximate slow manifold after the $j$-th increment of the $n$-th macrostep in the PI1 and PI2 methods, given by $|d^{n,j}_{0}| = |x^{n,j}_{0} - h_0(y^{n,j}_{0})|$, satisfies
\begin{align*}
|d^{n,j}_{0}| \le&  \left(\frac{\lambda \Dt}{\eps} \left(1-\frac{\dt}{\eps}\right)^{aM}\right)^j |d^{n}| + L_h C_g \left(1 + \lambda \right) \Dt \sum_{k=0}^{j-1} \left( \frac{ \lambda \Dt}{\eps}\left(1-\frac{\dt}{\eps}\right)^{aM} \right)^k \\
&+\mathcal{O}\left(\eps \, ,\, \left(1-\frac{\dt}{\eps}\right)^{M_1} \left|d^{n}\right|\right) \;\; ,
\end{align*}
where $d^{n,1}_{0} = d^n = x^n - h_0(y^n)$, and where we define $aM = \min_j M_j$ for the PI1 method to preserve the notation for both methods.
\end{lemma}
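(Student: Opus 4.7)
The plan is to derive a one-step recurrence from $|d^{n,j-1}_{0}|$ to $|d^{n,j}_{0}|$ and then iterate. The key observation is that the special form \eqref{basefast} of the fast vector field lets us express the fast part of the PI1 increment directly in terms of the deviation from the slow manifold.

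First, starting from \eqref{def:2znj0}, \eqref{hatky}, \eqref{hatkx}, I would write
\begin{align*}
x^{n,j}_{0} &= x^{n,1}_{M_1} + a_j \hat{k}_{x,j-1}(z^n), \qquad
y^{n,j}_{0} = y^{n,1}_{M_1} + a_j \hat{k}_{y,j-1}(z^n),
\end{align*}
and substitute \eqref{basefast} to obtain
\begin{align*}
\hat{k}_{x,j-1} &= -\frac{\Dt\,\Lambda}{\eps}\bigl(x^{n,j-1}_{M_{j-1}}-h_0(y^{n,j-1}_{M_{j-1}})\bigr) = -\frac{\Dt\,\Lambda}{\eps}\,d^{n,j-1}_{M_{j-1}}.
\end{align*}
Then I add and subtract $h_0(y^{n,1}_{M_1})$ so that
\begin{align*}
d^{n,j}_{0} &= d^{n,1}_{M_1} + \bigl(h_0(y^{n,1}_{M_1})-h_0(y^{n,j}_{0})\bigr) - a_j\frac{\Dt\,\Lambda}{\eps}\,d^{n,j-1}_{M_{j-1}}.
\end{align*}

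Second, I would bound each of the three resulting pieces. Assumption ({\it A1}) together with $|y^{n,j}_{0}-y^{n,1}_{M_1}| = a_j|\hat{k}_{y,j-1}| \le a_j\Dt\,C_g$ (using ({\it A4})) gives $|h_0(y^{n,1}_{M_1})-h_0(y^{n,j}_{0})|\le L_h C_g \Dt$. Lemma~\ref{lemma_dnm} applied to the microsolver preceding the $(j-1)$-th and the $1$-st increments yields
\begin{align*}
|d^{n,j-1}_{M_{j-1}}| &\le \Bigl(1-\tfrac{\dt}{\eps}\Bigr)^{M_{j-1}}|d^{n,j-1}_{0}| + L_h C_g\eps, \\
|d^{n,1}_{M_1}| &\le \Bigl(1-\tfrac{\dt}{\eps}\Bigr)^{M_1}|d^{n}| + L_h C_g\eps.
\end{align*}
Using $|\Lambda|\le\lambda$, $a_j\le 1$, and $M_{j-1}\ge aM$ for $j>1$, these combine to the one-step recurrence
\begin{align*}
|d^{n,j}_{0}| &\le \frac{\lambda\Dt}{\eps}\Bigl(1-\tfrac{\dt}{\eps}\Bigr)^{aM}|d^{n,j-1}_{0}| + L_h C_g(1+\lambda)\Dt + \mathcal{O}\Bigl(\eps,\ \bigl(1-\tfrac{\dt}{\eps}\bigr)^{M_1}|d^n|\Bigr),
\end{align*}
where the $\eps$ term collects $L_h C_g\eps$ and $a_j(\lambda\Dt/\eps)L_h C_g\eps = L_h C_g\lambda\Dt\cdot(\eps/\Dt)$ type contributions tracked through the expansion.

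Third, I iterate this recurrence from $j$ down to $j=1$, using the initial condition $d^{n,1}_{0}=d^n$. Summing the geometric series in $q := (\lambda\Dt/\eps)(1-\dt/\eps)^{aM}$ produces precisely
\begin{align*}
|d^{n,j}_{0}| &\le q^{j}|d^{n}| + L_h C_g(1+\lambda)\Dt\sum_{k=0}^{j-1} q^{k} + \mathcal{O}\Bigl(\eps,\ \bigl(1-\tfrac{\dt}{\eps}\bigr)^{M_1}|d^n|\Bigr),
\end{align*}
which is the claimed bound. For PI1, $M_j$ is not of the form $a_j M$, but setting $aM := \min_j M_j$ makes the same bound on the $d^{n,j-1}_{M_{j-1}}$ term go through unchanged. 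The main obstacle is bookkeeping: the term $(\lambda\Dt/\eps)L_h C_g\eps$ must be recognised as $O(\Dt)$ so that it merges cleanly into the $L_h C_g(1+\lambda)\Dt$ coefficient rather than appearing separately, and the residual $(1-\dt/\eps)^{M_1}|d^n|$ must be carried along as an explicit lower-order term since the initial microsolver is applied only once per macrostep.
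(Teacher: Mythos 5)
Your proposal is correct and follows essentially the same route as the paper: write $x^{n,j}_{0}=x^{n,1}_{M_1}+a_j\hat{k}_{x,j-1}$, use the special form \eqref{basefast} to turn the increment into $-a_j(\Dt\Lambda/\eps)\,d^{n,j-1}_{M_{j-1}}$, add and subtract $h_0(y^{n,1}_{M_1})$, bound the three pieces via ({\it A1}), ({\it A4}) and Lemma~\ref{lemma_dnm}, and iterate the resulting geometric recurrence. The bookkeeping you flag (absorbing $(\lambda\Dt/\eps)L_hC_g\eps$ into the $(1+\lambda)\Dt$ coefficient and carrying $(1-\dt/\eps)^{M_1}|d^n|$ as a residual) is exactly what the paper does.
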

The first term in this result measures the combined effect of the convergence of the fast variables towards the slow manifold over the microsteps, proportional to $(1-{\dt/ \eps})^{aM}$, and the departure of the fast variables from the slow manifold over the subsequent increment, proportional to $\lambda\Dt/\eps$. 
\begin{proof}
The PI1 and PI2 methods (cf \eqref{def:znj0} and \eqref{def:2znj0}), respectively satisfy
\begin{align*}
x^{n,j+1}_{0} =& x^{n,1}_{M_1} + a_{j+1} \hat{k}_{x,j}(x^n,y^n) \\
=&x^{n,1}_{M_1} + a_{j+1} \Dt \; \frac{\Lambda}{\eps}(-  \, x^{n,j}_{M_j}+ h_0(y^{n,j}_{M_j})) \;\; .
\end{align*}
Then
\begin{align}
\nonumber \left| d^{n,j+1}_{0}\right| =& \left| x^{n,j+1}_{0} - h_0(y^{n,j+1}_{0}) \right| \\
\nonumber =& \left| x^{n,1}_{M_1} + a_{j+1} \Dt \; \frac{\Lambda}{\eps}(-  \, x^{n,j}_{M_j}+ h_0(y^{n,j}_{M_j})) - h_0(y^{n,j+1}_{0}) \right| \\
\nonumber =& \left| d^{n,1}_{M_1} - a_{j+1} \Dt \frac{\Lambda}{\eps} \; d^{n,j}_{M_j} + h_0(y^{n,1}_{M_1}) - h_0(y^{n,j+1}_{0}) \right| \\
\label{incr.i} \le& \frac{\lambda \Dt}{\eps} \; \left|d^{n,j}_{M_j}\right|  + L_h \left|y^{n,1}_{M_1} - y^{n,j+1}_{0} \right| +  \left|d^{n,1}_{M_1}\right|  \;\; .
\end{align}
Employing Lemma~\ref{lemma_dnm} with the Euler microsolver and substituting the slow component of \eqref{def:znj0} and the definition of $\hat{k}$, \eqref{hatky} and \eqref{2hatky} respectively, for $y^{n,j+1}_{0}$ we obtain
\begin{align*}
\left| d^{n,j+1}_{0}\right| \le& \frac{\lambda \Dt}{\eps} \;\left( \left(1-\frac{\dt}{\eps}\right)^{M_j} \left|d^{n,j}_{0}\right| + L_h C_g \eps \right)+ L_h C_g \Dt +\left(1-\frac{\dt}{\eps}\right)^{M_1} \left|d^{n}\right| + L_h C_g \eps\\
\le&  \frac{\lambda \Dt}{\eps} \; \left(1-\frac{\dt}{\eps}\right)^{a M} \left|d^{n,j}_{0}\right| + L_h C_g \left(1 + \lambda \right) \Dt +\left(1-\frac{\dt}{\eps}\right)^{M_1} \left|d^{n}\right| + L_h C_g \eps \;\; .
\end{align*}
Iterating this recursive relation concludes the Lemma.
\end{proof}

\noindent
We now prove Theorem~\ref{theorem.sec} in two parts. We first establish bounds on the distance of the fast variables from the slow manifold after one macrostep of the PI1 scheme in Lemma~\ref{d.PI1}, and then follow with the corresponding bound for the PI2 scheme in Lemma~\ref{d.PI2}.
\medskip
\begin{lemma}
\label{d.PI1}
Given assumptions {\it (A1)} and {\it (A4)}, the distance of the fast variables from the approximate slow manifold after the $n$-th macrostep in the PI1 scheme, given by $|d^{n+1}| = |x^{n+1} - h_0(y^{n+1})|$, satisfies the recurrence relation
\begin{align*}
|d^{n+1}| \le& \sum_{j=1}^P b_j \left( \frac{\lambda \Dt}{\eps} \left(1-\frac{\dt}{\eps}\right)^{aM}\right)^j |d^{n}| \\
& + L_h C_g (1+\lambda)\Dt  \sum_{j=0}^{P-1} \left( \frac{\lambda\Dt }{\eps}\!\! \left(1-\frac{\dt}{\eps}\right)^{aM} \right)^j  +\mathcal{O}\left(\eps \, ,\, \left(1-\frac{\dt}{\eps}\right)^{M_1} \left|d^{n}\right|\right) \;\;.
\end{align*}
In particular, the fast variables do not diverge if 
\begin{align*}
\frac{\lambda \Dt}{\eps}\left(1-\frac{\dt}{\eps}\right)^{a M} < 1\;.
\end{align*} 
When this condition is satisfied, the distance of the fast variables from the slow manifold after a macrostep can be written to lowest order as
\begin{align*}
|d^{n+1}| \le \sum_{j=1}^P b_j \left( \frac{\lambda \Dt}{\eps} \left(1-\frac{\dt}{\eps}\right)^{aM}\right)^j |d^{n}| 
 \; + \; L_h C_g (1+\lambda)\Dt \;\;.
\end{align*}
\end{lemma}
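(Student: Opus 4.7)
The plan is to trace $d^{n+1}$ through the PI1 macrosolver, express it as a weighted combination of the deviations $d^{n,j}_{M_j}$ after each microsolver application, and then successively apply Lemmas~\ref{lemma_dnm} and \ref{reinit.incr} to bound these in terms of $|d^n|$.

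First I would write out the fast component of the PI1 macrostep explicitly. Using the linear form $f(x,y) = (\Lambda/\varepsilon)(-x + h_0(y))$, each increment becomes $\hat{k}_{x,j} = -(\Lambda\Dt/\varepsilon)\,d^{n,j}_{M_j}$, so that
\begin{align*}
x^{n+1} = x^{n,1}_{M_1} - \frac{\Lambda\Dt}{\varepsilon}\sum_{j=1}^P b_j\, d^{n,j}_{M_j}.
\end{align*}
Adding and subtracting $h_0(y^{n,1}_{M_1})$, I would decompose
\begin{align*}
d^{n+1} = d^{n,1}_{M_1} + \bigl[h_0(y^{n,1}_{M_1}) - h_0(y^{n+1})\bigr] - \frac{\Lambda\Dt}{\varepsilon}\sum_{j=1}^P b_j\, d^{n,j}_{M_j}.
\end{align*}
Assumption ({\it A1}) gives Lipschitz control of $h_0$, Assumption ({\it A4}) bounds $|y^{n+1} - y^{n,1}_{M_1}| \le C_g \Dt$ through the slow macro-update, and the diagonal assumption on $\Lambda$ gives $|\Lambda|\le\lambda$; together these yield
\begin{align*}
|d^{n+1}| \le |d^{n,1}_{M_1}| + L_h C_g \Dt + \frac{\lambda\Dt}{\varepsilon}\sum_{j=1}^P b_j |d^{n,j}_{M_j}|.
\end{align*}

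Next I would invoke Lemma~\ref{lemma_dnm} (forward Euler microsolver) to bound each $|d^{n,j}_{M_j}| \le (1-\dt/\varepsilon)^{aM}|d^{n,j}_{0}| + L_h C_g \varepsilon$, and then chain this with Lemma~\ref{reinit.incr} to obtain
\begin{align*}
|d^{n,j}_{M_j}| \le \left(\frac{\lambda\Dt}{\varepsilon}\right)^{j-1}\!\!\left(1-\frac{\dt}{\varepsilon}\right)^{jaM}\!|d^n| + L_h C_g (1+\lambda)\Dt \sum_{k=0}^{j-2}\!\left(\frac{\lambda\Dt}{\varepsilon}(1-\tfrac{\dt}{\varepsilon})^{aM}\right)^{\!k} + \mathcal{O}(\varepsilon, (1-\tfrac{\dt}{\varepsilon})^{M_1}|d^n|).
\end{align*}
The $j=1$ term follows directly from Lemma~\ref{lemma_dnm} since $d^{n,1}_{0} = d^n$. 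Substituting into the previous display and multiplying by $\lambda\Dt/\varepsilon$ shifts each exponent up by one, producing the weighted sum $\sum_j b_j (\lambda\Dt/\varepsilon)^j(1-\dt/\varepsilon)^{jaM}|d^n|$ that matches the stated bound. The residual $L_h C_g$ term collects contributions from $|d^{n,1}_{M_1}|$, from $h_0(y^{n,1}_{M_1})-h_0(y^{n+1})$, and from the chained geometric sums; reindexing and bounding $\sum_{j=0}^{P-1}(\cdots)^j$ by itself (rather than by its closed form, to keep the recurrence form) gives the inhomogeneous term $L_h C_g(1+\lambda)\Dt \sum_{j=0}^{P-1}(\lambda\Dt(1-\dt/\varepsilon)^{aM}/\varepsilon)^j$.

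For the stability statement I would observe that the recurrence has the form $|d^{n+1}| \le \alpha |d^n| + \beta$, where $\alpha = \sum_j b_j (\lambda\Dt(1-\dt/\varepsilon)^{aM}/\varepsilon)^j$ and $\beta$ is the inhomogeneous piece. Under $\lambda\Dt(1-\dt/\varepsilon)^{aM}/\varepsilon < 1$, each term of the geometric sum defining $\alpha$ is $<1$, and using $\sum_j b_j = 1$ yields $\alpha < 1$, so the iterates remain finite and $|d^n_{\max}|$ is bounded; furthermore, under this stability condition the geometric sum in $\beta$ collapses to at most a multiplicative constant, giving the stated lowest-order form $|d^{n+1}| \le \sum_j b_j (\cdots)^j |d^n| + L_h C_g(1+\lambda)\Dt$.

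The main obstacle will be bookkeeping: keeping track of which $\mathcal{O}(\varepsilon)$ and $\mathcal{O}((1-\dt/\varepsilon)^{M_1}|d^n|)$ contributions arise at which step of the chain, and ensuring that the inhomogeneous term in Lemma~\ref{reinit.incr} (which is already a partial geometric sum) combines cleanly with the outer sum over $j$ in the macrostep without double-counting. The linearity of $f$ makes the algebraic identification $\hat{k}_{x,j} = -(\Lambda\Dt/\varepsilon)d^{n,j}_{M_j}$ exact, which is what keeps the final bound tight enough to see the stability threshold explicitly.
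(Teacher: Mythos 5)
Your proposal is correct and follows essentially the same route as the paper: the same decomposition of $x^{n+1}$ via $\hat{k}_{x,j}=-(\Lambda\Dt/\eps)\,d^{n,j}_{M_j}$, the same addition and subtraction of $h_0(y^{n,1}_{M_1})$ with ({\it A1}) and ({\it A4}) controlling the $h_0$ difference, and the same chaining of Lemma~\ref{lemma_dnm} with Lemma~\ref{reinit.incr} before summing the geometric series for stability. No substantive differences to report.
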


\begin{proof}
We substitute the macrosolver \eqref{PI1_macrox} and increments \eqref{hatkx} into $|d^{n+1}|$, obtaining
\begin{align}
\nonumber |d^{n+1}| =& \left|x^{n+1}-h_0(y^{n+1})\right| \\
\nonumber =& \big|x^{n,1}_{M_1} - \frac{\lambda \Dt}{\eps} \sum_{j=1}^P b_j d^{n,j}_{M_j} -h_0(y^{n+1})\big|\\
\nonumber =& \big|d^{n,1}_{M_1} - \frac{\lambda \Dt}{\eps} \sum_{j=1}^P b_j d^{n,j}_{M_j} +h_0(y^{n,1}_{M_1}) -h_0(y^{n+1})\big|\\
\nonumber \le& \frac{\lambda \Dt}{\eps}  \sum_{j=1}^P b_j \big|d^{n,j}_{M_j}\big|  + \big|h_0(y^{n,1}_{M_1}) -h_0(y^{n+1})\big|+ |d^{n,1}_{M_1}| \;\;.
\end{align}
Employing Lemma~\ref{lemma_dnm}, Assumption ({\it A1}) on the Lipschitz continuity of the approximate slow manifold and \eqref{def:znj0}--\eqref{hatky} to bound $|h_0(y^{n,1}_{M_1}) - h_0(y^{n+1})|$ produces
\begin{align*}
|d^{n+1}| \le&\frac{\lambda \Dt}{\eps}  \sum_{j=1}^P b_j \left(\left(1-\frac{\dt}{\eps}\right)^{M_j}|d^{n,j}_{0}| + L_h C_g \eps \right)\\
&+ L_h\left|y^{n,1}_{M_1} -y^{n+1}\right| +\left(1-\frac{\dt}{\eps}\right)^{M_1} \left|d^{n}\right| + L_h C_g \eps \\
\le & \frac{\lambda \Dt}{\eps}  \sum_{j=1}^P b_j \left(1-\frac{\dt}{\eps}\right)^{aM}|d^{n,j}_{0}| + L_h C_g (1+\lambda)\Dt \\
&+\left(1-\frac{\dt}{\eps}\right)^{M_1} \left|d^{n}\right| + L_h C_g \eps\;\; .
\end{align*}
Substituting the bound from Lemma~\ref{reinit.incr} into $|d^{n,j}_{0}|$ yields
\begin{align}
\nonumber |d^{n+1}| \le & \sum_{j=1}^P b_j\Bigg(\left(\frac{\lambda \Dt}{\eps} \left(1-\frac{\dt}{\eps}\right)^{aM}\right)^{j} |d^{n}|+\!\! \Bigg(\!\! L_h C_g (1+\lambda)\Dt \\
\nonumber&\qquad\qquad +\left(1-\frac{\dt}{\eps}\right)^{\!\!M_1}\!\!\! \left|d^{n}\right| + L_h C_g \eps \! \Bigg)\! \sum_{k=0}^{j-1} \left( \frac{\lambda \Dt }{\eps}\left(1-\frac{\dt}{\eps}\right)^{aM} \right)^{\!\!k}\Bigg) \\
\nonumber& \le \sum_{j=1}^P b_j\left(\frac{\lambda \Dt}{\eps} \!\! \left(1-\frac{\dt}{\eps}\right)^{aM}\right)^{j} \!\!\!\! |d^{n}| + \Bigg(L_h C_g (1+\lambda)\Dt\\
\label{r.PI1}& \qquad\qquad +\left(1-\frac{\dt}{\eps}\right)^{M_1} \left|d^{n}\right| + L_h C_g \eps \Bigg) \sum_{j=0}^{P-1} \left( \frac{\lambda\Dt }{\eps}\!\! \left(1-\frac{\dt}{\eps}\right)^{aM} \right)^j  .
\end{align} 
The upper bound on $|d^n|$ diverges as n increases unless
\begin{align*}
\frac{\lambda \Dt}{\eps} \left(1-\frac{\dt}{\eps}\right)^{aM} <1 \;\;,
\end{align*}
completing the Lemma.
\end{proof}


\begin{remark}\label{PI1.r}
The bound presented above for PI1 is not sharp. 
If the duration of the microsolver is sufficiently large with $M\dt \gg \eps$, or if the fast variables are initialised on the slow manifold with $|d^0=0|$, then the PI1 increments may be approximately tangent to the slow manifold and higher order accuracy in $\Dt$ can be achieved.\\
\end{remark}

\noindent
We now formulate analogous results for PI2.
\begin{lemma}
\label{d.PI2}
Given assumptions {\it (A1)}, {\it (A3)} and {\it (A4)}, the distance of the fast variables from the approximate slow manifold after the $n$-th macrostep in the PI2 scheme, given by $|d^{n+1}| = |x^{n+1} - h_0(y^{n+1})|$, satisfies the recurrence relation

\begin{align*}
|d^{n+1}| \le &  \ \frac{\left(1-\frac{\dt}{\eps}\right)^{aM}}{a}\Bigg[\sum_{j=1}^P b_j \left(\frac{\lambda \Dt}{\eps} \left(1-\frac{\dt}{\eps}\right)^{aM}\right)^j |d^{n}| \\
 &+ L_h C_g (1+\lambda)\Dt \sum_{j=0}^{P-1} \left( \frac{\Dt \lambda}{\eps}\left(1-\frac{\dt}{\eps}\right)^{aM} \right)^j\Bigg]+2L_{h'} C_g^2 \td^2 \\
 & +\mathcal{O}\left(\eps \, ,\, \left(1-\frac{\dt}{\eps}\right)^{M_1} \left|d^{n}\right|\right)  \;\; .
\end{align*}
In particular, the fast variables do not diverge if 
\begin{align*}
\frac{\left(1-\frac{\dt}{\eps}\right)^{aM}}{a} \left(\frac{\lambda \Dt}{\eps} \left(1-\frac{\dt}{\eps}\right)^{aM}\right)^j <1 \;\;\; \forall \; j.
\end{align*} 
When this condition is satisfied, the distance of the fast variables from the slow manifold after a macrostep can be written to lowest order as
\begin{align*}
|d^{n+1}| \le &  \ \frac{\left(1-\frac{\dt}{\eps}\right)^{aM}}{a}\sum_{j=1}^P b_j \left(\frac{\lambda \Dt}{\eps} \left(1-\frac{\dt}{\eps}\right)^{aM}\right)^j |d^{n}| +2L_{h'} C_g^2 \td^2 \;\; .
\end{align*}
\end{lemma}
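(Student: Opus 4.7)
The plan is to mirror the structure of Lemma~\ref{d.PI1} while exploiting the defining feature of PI2: the increments $k_{x,j}$ are differences of microsolver endpoints rather than raw vectorfield evaluations, so each $x^{n,j+1}_{M_{j+1}}$ appearing in the macrostep has already been relaxed by the microsolver. This is what will produce the extra $(1-\dt/\eps)^{aM}$ contraction factor in front of the recurrence and replace the PI1 error term $L_h C_g(1+\lambda)\Dt$ by the quadratic $2L_{h'}C_g^2\td^2$. I would start by substituting the PI2 macrosolver \eqref{macrox} with the increment definition \eqref{def:kx} into $|d^{n+1}|=|x^{n+1}-h_0(y^{n+1})|$, obtaining
\begin{align*}
x^{n+1} - h_0(y^{n+1}) = x^{n,1}_{M_1} - h_0(y^{n+1}) + \sum_{j=1}^P \frac{b_j}{a_{j+1}}\bigl(x^{n,j+1}_{M_{j+1}} - x^{n,1}_{M_1}\bigr),
\end{align*}
and then decomposing $x^{n,j+1}_{M_{j+1}} = h_0(y^{n,j+1}_{M_{j+1}}) + d^{n,j+1}_{M_{j+1}}$, so that the right-hand side splits cleanly into an ``$h_0$ part'' and a ``deviation part''.

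Second, I would Taylor expand the $h_0$ terms about $y^{n,1}_{M_1}$, invoking Assumption (\emph{A3}) to control the remainder. The key observation is that the slow component of the PI2 macrostep has exactly the form $y^{n+1} - y^{n,1}_{M_1} = \sum_{j=1}^P (b_j/a_{j+1})(y^{n,j+1}_{M_{j+1}} - y^{n,1}_{M_1})$, so the linear $Dh_0$ pieces in the expansion of $h_0(y^{n+1})-h_0(y^{n,1}_{M_1})$ cancel against those from $\sum_j (b_j/a_{j+1})(h_0(y^{n,j+1}_{M_{j+1}}) - h_0(y^{n,1}_{M_1}))$. What survives is a sum of second-order remainders bounded, via Assumption (\emph{A3}), by $L_{h'}$ times the squared slow-variable displacements, which by Assumption (\emph{A4}) and $|y^{n,j+1}_{M_{j+1}}-y^{n,1}_{M_1}|\le C_g\td$ yields the $2L_{h'}C_g^2\td^2$ term.

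Third, I would handle the deviation part. Lemma~\ref{lemma_dnm} with a forward Euler microsolver gives $|d^{n,j+1}_{M_{j+1}}|\le(1-\dt/\eps)^{M_{j+1}}|d^{n,j+1}_0|+L_hC_g\eps$, and analogously for $|d^{n,1}_{M_1}|$. Using $M_{j+1}=a_{j+1}M\ge aM$ and collecting the $1/a_{j+1}\le 1/a$ factors (noting $a_{P+1}=1$ so $1/a\ge 1$), the deviation contribution to $|d^{n+1}|$ is at worst $(1-\dt/\eps)^{aM}/a$ times $\sum_j b_j|d^{n,j+1}_0|$ plus lower-order terms. Substituting Lemma~\ref{reinit.incr} for each $|d^{n,j+1}_0|$ produces the geometric sum $\sum_{j=1}^P b_j(\lambda\Dt/\eps\,(1-\dt/\eps)^{aM})^j|d^n|$ in front of the overall $(1-\dt/\eps)^{aM}/a$ prefactor, together with the $L_hC_g(1+\lambda)\Dt\sum_{j=0}^{P-1}(\cdot)^j$ contribution. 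The stability condition then follows by requiring each of these geometric factors be strictly less than one, and the stated low-order form of the bound is obtained by truncating terms that are $\mathcal{O}(\eps)$ or $\mathcal{O}((1-\dt/\eps)^{M_1}|d^n|)$.

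The main obstacle will be executing the Taylor cancellation with enough care that the linear $Dh_0$ contributions really do vanish across the weighted sums, while tracking bookkeeping of the $1/a_{j+1}$ factors consistently so the neat $(1-\dt/\eps)^{aM}/a$ prefactor emerges rather than a more complicated $j$-dependent expression. A related subtlety is that Assumption (\emph{A1}) alone cannot deliver the $\td^2$ term — one genuinely needs Assumption (\emph{A3}) to convert what would be a $L_h\,\mathcal{O}(\td)$ remainder into a $L_{h'}\,\mathcal{O}(\td^2)$ one, which is precisely the order gain of PI2 over PI1.
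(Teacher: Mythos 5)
Your proposal follows essentially the same route as the paper's proof: the same split of $x^{n+1}-h_0(y^{n+1})$ into an $h_0$ part and a deviation part, the same second-order Taylor expansion about $y^{n,1}_{M_1}$ with cancellation of the linear $\D h_0$ contributions leaving the $2L_{h'}C_g^2\td^2$ term via Assumption (\emph{A3}), and the same application of Lemma~\ref{lemma_dnm} and Lemma~\ref{reinit.incr} with the $1/a_{j+1}\le 1/a$ bookkeeping to produce the $(1-\dt/\eps)^{aM}/a$ prefactor and the geometric sums. Your closing remark that (\emph{A3}) rather than (\emph{A1}) is what upgrades the remainder from $\mathcal{O}(\td)$ to $\mathcal{O}(\td^2)$ is exactly the point of the lemma.
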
 
\noindent
We remark that the first bound presented in the above Lemma for PI2 is precisely $\left(1-\frac{\dt}{\eps}\right)^{aM}/a$ times the bound presented for PI1 in Lemma~\ref{d.PI1}, with an additional term proportional to $ \td^2$ and the curvature of the slow manifold, measured by $L_{h'}$.

\begin{proof}
We reformulate $x^{n+1}$, employing \eqref{macrox} and \eqref{def:kx} and Lemma~\ref{lemma_dnm} to estimate
\begin{align}
\nonumber |d^{n+1}| =& \left|x^{n+1}-h_0(y^{n+1})\right| \\
\nonumber=&  \big|x^{n,1}_{M_1} + \sum_{j=1}^P \frac{b_j}{a_{j+1}}\left(x^{n,j+1}_{M_{j+1}} - x^{n,1}_{M_1}\right) -h_0(y^{n+1})\big| \\
\nonumber \le&  \big|h_0(y^{n,1}_{M_1}) - h_0(y^{n+1}) + \sum_{j=1}^P \frac{b_j}{a_{j+1}}\left(h_0(y^{n,j+1}_{M_{j+1}}) - h_0(y^{n,1}_{M_1})\right)\big| \\
\label{x.relax} & +\sum_{j=1}^P \frac{b_j}{a_{j+1}}\left|d^{n,j+1}_{M_{j+1}} + (a_{j+1}-1) d^{n,1}_{M_1}\right|  \;\; .
\end{align}
The first term on the right-hand side of \eqref{x.relax} can be Taylor expanded to second order to obtain
\begin{align}
\nonumber h_0(y^{n,1}_{M_1}) - h_0(y^{n+1}) =& h_0(y^{n,1}_{M_1}) - h_0\left(y^{n,1}_{M_1} + \sum_{j=1}^P b_jk_{y,j}\right) \\
\nonumber =&- \D h_0\left(y^{n,1}_{M_1}\right) \left(\sum_{j=1}^P b_j k_{y,j}\right) \\
\label{t.heps}&-\sum_{|\alpha|=2}\frac{1}{\alpha!} \partial^\alpha h_0\left(y^{n,1}_{M_1}\right)\Bigg(\sum_{j=1}^P b_j k_{y,j}\Bigg)^{\!\alpha} +\mathcal{O}(\td^3) \;\;,
\end{align}
where we used multi-index notation to denote the second order derivatives of $h_0$. Similarly, the second term on the right-hand side of \eqref{x.relax} can be estimated by Taylor expanding the chord $h_0(y^{n,j+1}_{M_{j+1}}) - h_0(y^{n,1}_{M_1})$ to second order, employing \eqref{def:ky}, with
\begin{align}
\nonumber h_0(y^{n,j+1}_{M_{j+1}}) - h_0(y^{n,1}_{M_1}) =& h_0\left(y^{n,1}_{M_1} + (y^{n,j+1}_{M_{j+1}}-y^{n,1}_{M_1})\right) - h_0(y^{n,1}_{M_1}) \\
\nonumber =&\D h_0(y^{n,1}_{M_1})(y^{n,j+1}_{M_{j+1}}-y^{n,1}_{M_1}) \\
\nonumber &+\sum_{|\alpha|=2} \frac{1}{\alpha!}\partial^\alpha h_0\left(y^{n,1}_{M_1}\right)\left(y^{n,j+1}_{M_{j+1}}-y^{n,1}_{M_1}\right)^{\!\alpha}  +\mathcal{O}(\td^3)  \\
\nonumber =&a_{j+1}\D h_0(y^{n,1}_{M_1}) k_{y,j}\\
\label{t.micro}&+a_{j+1}^2\sum_{|\alpha|=2} \frac{1}{\alpha!}\partial^\alpha h_0\left(y^{n,1}_{M_1}\right)k^{\alpha}_{y,j} +\mathcal{O}(\td^3)   \;\;.
\end{align}
Substituting \eqref{t.micro} and \eqref{t.heps} into \eqref{x.relax} yields
\begin{align}
\nonumber |d^{n+1}| \le&  \left| \sum_{|\alpha|=2} \frac{1}{\alpha!}\partial^\alpha h_0\left(y^{n,1}_{M_1}\right)\right| \max_{|\alpha|=2}\left|\sum_{j=1}^P b_j a_{j+1} k^{\alpha}_{y,j} - \Bigg(\sum_{j=1}^P b_j k_{y,j}\Bigg)^{\!\alpha} \right| \\
\nonumber & +\sum_{j=1}^P \frac{b_j}{a_{j+1}}\left|d^{n,j+1}_{M_{j+1}} + (a_{j+1}-1) d^{n,1}_{M_1}\right|+\mathcal{O}(\td^3)  \\
\nonumber \le&  2L_{h'} \max_{\substack{|\alpha|=2, \\ 1 \le k \le P}}|k_{y,j}(x^n,y^n)|^\alpha\\
\nonumber & +\sum_{j=1}^P \frac{b_j}{a_{j+1}}\left(|d^{n,j+1}_{M_{j+1}}| + |d^{n,1}_{M_1}|\right)   +\mathcal{O}(\td^3)\;\;,
\end{align}
where we used that $\sum_{j=1}^P b_j=1$, $a_j\le 1$, and employed Assumption ({\it A3}) on the Lipshitz continuity of the Jacobian $Dh_0$. Employing Lemma~\ref{lemma_dnm} and recalling that the time step covered by each PI2 increment is $\td$, we obtain
\begin{align*}
|d^{n+1}| \le &2L_{h'} C_g^2 \td^2 + \sum_{j=1}^P \frac{b_j}{a_{j+1}}\left( \left(1-\frac{\dt}{\eps}\right)^{aM}\!\!\! \left(|d^{n,j+1}_{0}| + |d^{n,1}_{0}|\right) + 2L_h C_g \eps \right)   \;\;,
\end{align*}
which on substituting Lemma~\ref{reinit.incr} becomes
\begin{align*}
|d^{n+1}| \le &  \frac{\left(1-\frac{\dt}{\eps}\right)^{aM}}{a}\Bigg[\sum_{j=1}^P b_j \left(\frac{\lambda \Dt}{\eps} \left(1-\frac{\dt}{\eps}\right)^{aM}\right)^j |d^{n}| + \Bigg(\!\! L_h C_g (1+\lambda)\Dt 
\\& \qquad\quad\qquad+\left(1-\frac{\dt}{\eps}\right)^{M_1} \left|d^{n}\right| + L_h C_g \eps\! \Bigg) \! \sum_{j=0}^{P-1} \left( \frac{\Dt \lambda}{\eps}\left(1-\frac{\dt}{\eps}\right)^{aM} \right)^j\Bigg]\\
&+2L_{h'} C_g^2 \td^2  + \mathcal{O}(\eps) \;\; .
\end{align*}
The upper bound on $|d^n|$ diverges as n increases unless
\begin{align*}
\frac{\left(1-\frac{\dt}{\eps}\right)^{aM}}{a} \left(\frac{\lambda \Dt}{\eps} \left(1-\frac{\dt}{\eps}\right)^{aM}\right)^j <1 \;\;\; \forall \; j\;,
\end{align*}
completing the Lemma.
\end{proof}
Combining Lemmas~\ref{d.PI1} and \ref{d.PI2} yields Theorem \ref{theorem.sec}.\\

\section{Numerics}
\label{sec:numerics}
We now illustrate the key results of Theorem~\ref{theorem.main} with a fourth-order Runge-Kutta macrosolver, which we recall here for $P=4$ including the constants obtained in the proof. We employ a forward Euler microsolver with $p=1$ unless otherwise stated. \\
In order to compare PI1 and PI2 at the same computational cost, we choose the number of microsteps in the PI2 method proportionally lower so that the two methods take the same number of microsteps over one macrostep, with $M_j=M$ in PI1 and $M_j = \{ M, M/2, M/2, M, M\}$ in PI2. Recalling Theorem~\ref{theorem.main} for $P=4$, the discretisation error $|E^n_d| = |y^n-Y(t^n)|$ is bounded in the PI1 and PI2 method by
\begin{align*}
|E^n_d| \le& \frac{e^{L_G t^n}}{L_G} 
 \Bigg\{  
C_4^* \td^4 + C_2^*M_{\uR{1},\uR{2}}\dt
 + 2 L_g \left(9\frac{\eps}{\td} + e^{-\frac{M_{\uR{1},\uR{2}}\dt}{\varepsilon}} \right)|d^{n}_{\max}| + 2 L_g L_h C_g  \eps\Bigg\} \;\; ,
\end{align*}
with $M_{\uR{1}}=M$ for PI1 and $M_{\uR{2}}=M/2$ for PI2, and where for $P=4$, $a=1/2$. However, the distance of the fast variables from the slow manifold after a macrostep scales differently in the two methods. Recalling Theorem~\ref{theorem.sec}, the distance of the fast variables from the approximate slow manifold $|d^n| = |x^n-h_0(y^n)|$ is bounded for stable applications of the PI1 method by
\begin{align*}
|d^{n+1}| \le& \sum_{j=1}^P b_j \left( \frac{\lambda \Dt}{\eps} \left(1-\frac{\dt}{\eps}\right)^{M}\right)^j |d^{n}|  + L_h C_g (1+\lambda)\Dt \;\;,
\end{align*}
and for stable applications of the PI2 method by
\begin{align*}
|d^{n+1}| \le &  \ 2\left(1-\frac{\dt}{\eps}\right)^{\frac{M}{2}}\sum_{j=1}^P b_j \left(\frac{\lambda \Dt}{\eps} \left(1-\frac{\dt}{\eps}\right)^{\frac{M}{2}}\right)^j |d^{n}| +2L_{h'} C_g^2 \td^2   \;\; .
\end{align*}

We show results for the multiscale system
\begin{align}
\label{toy_y}\dot{y}_{\eps} &= -x_\eps y_\eps -\alpha\; y_\eps^2 \\
\label{toy_x}\dot{x}_{\eps} &= \frac{-x_\eps+\sin^2(y_\eps)}{\eps} \; ,
\end{align}
which has stable fixed point at $(0,0)$. At lowest order in $\eps$, the associated slow reduced system is given by
\begin{align}
\label{toy_Y} \dot{Y} &= -Y\sin^2(Y) - \alpha\; Y^2 \; .
\end{align}
For higher order approximations of the slow manifold and the associated coordinate transformations relating $y$ and $Y$ the reader is referred to the useful webtool \cite{RobertsWeb} (see also \cite{Roberts08}).\\ The system (\ref{toy_y})-(\ref{toy_x}) with initial conditions $y_\eps(0) > 0$ is locally Lipschitz with Lipschitz constant $L_h = 1$ and $L_g={\rm{max}}(|x_\eps|+2\alpha |y_\eps|)$ where the maximum is taken over the local region around the initial conditions $(x_\eps(0),y_\eps(0))$ under consideration. The vectorfield of the slow dynamics (\ref{toy_y}) is locally bounded by $C_g = \max( |x_\eps y_\eps|+\alpha |y_\eps|^2)$, with the maximum taken over the same region. Note that the free parameter $\alpha$ controls the constants $C_2^*= \alpha y_\eps(0)^3(2\alpha+\sin(2y_\eps(0))) + \O(\sin^3(y_\eps(0)))$ and $C_4^* = 16\alpha^3 y_\eps(0)^3 + 8\alpha^4y_\eps(0)^5 + \O(\sin^5(y_\eps(0)))$. \\

\noindent
We first investigate how the discretization error $|E^n_d|$ scales with the macrostep size $\Dt$ in the PI1 and PI2 methods, when all other parameters are kept fixed (except $n$, to fix the final time T). Our analytical result predicts that, so long as $|d^n_{\max}|$ is small and the practical assumption $\eps < M\dt$ is satisfied, results will be divided into two regimes: for $C_P^*\td^P < C_2^*M\dt$, the bound for $|E^n_d|$ is dominated by the term proportional to $C_2^* M\dt$ and $|E^n_d|$ is independant of $\td$; for $C_P^*\td^P > C_2^*M\dt$, the scaling is $|E^n_d| \sim \td^P$. The slight advantage of the PI2 method in this case is that distributing the same total number of microsteps over more applications of the microsolver results in lower error due to $M\dt$. To keep the term proportional to $|d^n_{\max}|$ small in both cases, we choose parameters so that $\frac{\lambda \Dt}{\eps} \left(1-\frac{\dt}{\eps}\right)^{aM} < 1$. The predicted regimes are clearly visible in Figure \ref{EDt}, where results are presented for a range of macrostep sizes $\Dt$ for PI1 and PI2.
We choose $\alpha=0.2$, and the scale separation parameter $\eps=10^{-9}$. We use $M=40$ microsteps with microstep size $\dt = 0.4\eps$, while the number of iterations $n$ vary from $20$ to $10^5$ to keep $T=1$ fixed for all values of $\Dt$. Initial conditions are chosen to lie on the approximate slow manifold with $y^0 = 1$, $x^0 = \sin^2(1)$. The Lipschitz constants are $L_g=1.1$ and $L_h = 1$, the bound on the vector field of the slow dynamics is $C_g=2$, and the maximal derivatives of the reduced slow dynamics are $C_2^*=4$ and $C_4^* = 8$.\\

\begin{figure}
\includegraphics[width=\textwidth]{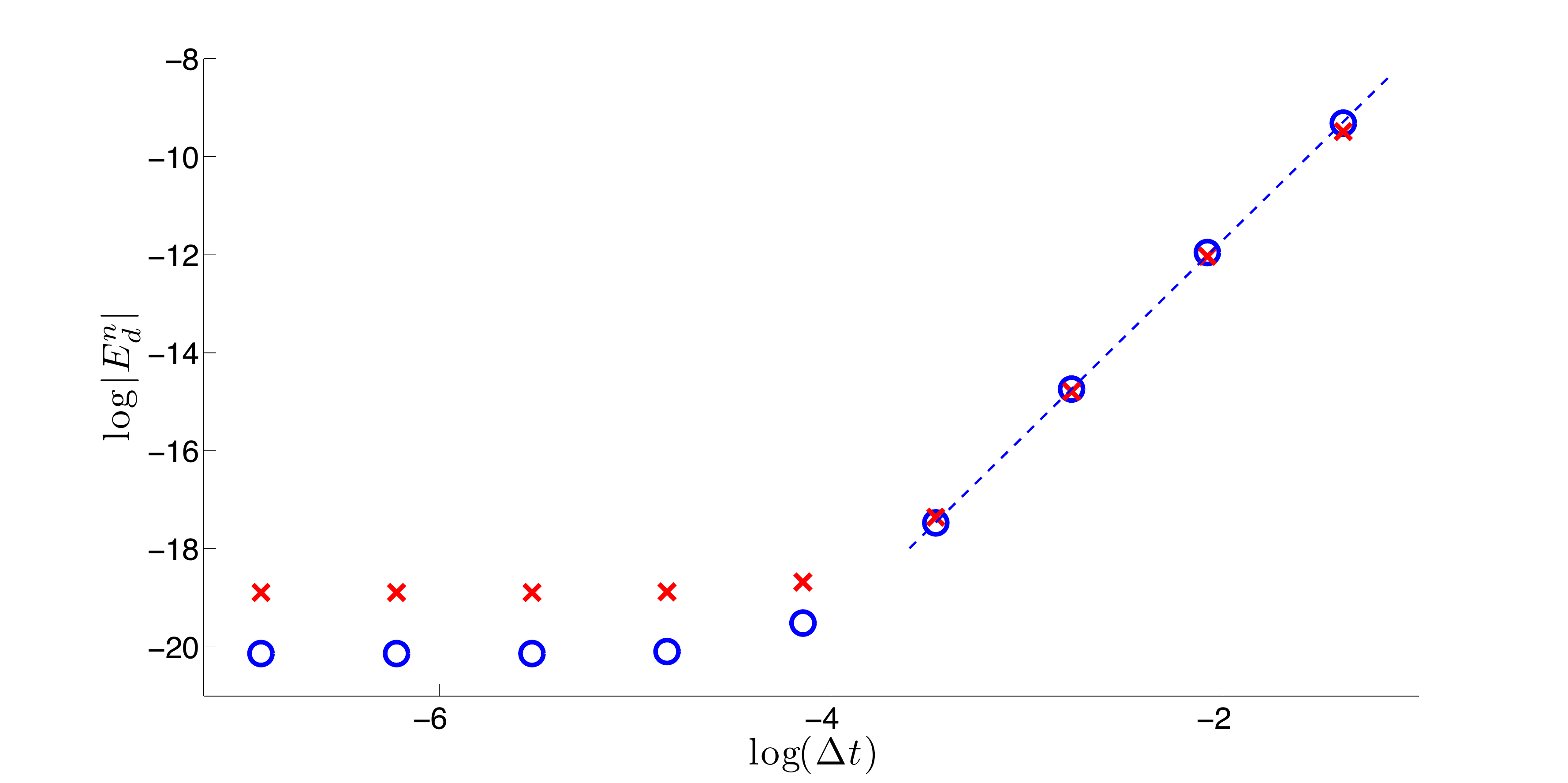}
\caption{Plot of $\log |E_d^n|$ versus $\log (\Dt)$ for fixed time of integration $T=1$ of the system \eqref{toy_y}-\eqref{toy_x}. The crosses represent results from the PI1 scheme and the circles represent results from the PI2 scheme. The dashed line is a linear regression line with a slope of $3.93$.}
\label{EDt}
\end{figure}

\noindent
We present results for the error scaling of $|E^n_d|$ with the microstep size $\dt$ in Figure \ref{dt}. To focus on the scaling with $M\dt$, we select parameters with $C^*_P \td^P < C^*_2 M\dt$, and control the distance of the fast variables from the slow manifold $|d^n_{\max}|$ by ensuring $\frac{\lambda \Dt}{\eps} \left(1-\frac{\dt}{\eps}\right)^{aM} < 1$. Figure \ref{dt} confirms our analytical result, that under the condition $C_P^*\td^P < C_2^*M\dt$, the discretization error scales like $E^n_d \sim M\dt$. The advantage of the PI2 method here is that distributing the microsteps over an additional application of the microsolver leads to an overall smaller error compared to PI1 due to the smaller drift of the slow variables over the microsolver. \\
We use a second-order Runge-Kutta microsolver (i.e. $p=2$), to demonstrate that the scaling with $\dt$ is not affected by the order of the microsolver. We choose $\alpha=1$, and the scale separation parameter $\eps=10^{-5}$. We use $M=100$ microsteps and $n=50$ iterations of each method. The macrostep size $\Dt$ varies from $0.0035$ to $0.0032$ to keep $\td$ fixed as $\dt$ increases. Initial conditions are chosen to lie on the approximate slow manifold with $y^0 = 5$, $x^0 = \sin^2(5)$. The Lipschitz constants are $L_g=11$ and $L_h = 1$, the bound on the vector field of the slow dynamics is $C_g=30$, and the maximal derivatives of the reduced slow dynamics are $C_2^*=50$ and $C_4^* = 2000$.\\

\begin{figure}
\includegraphics[width=\textwidth]{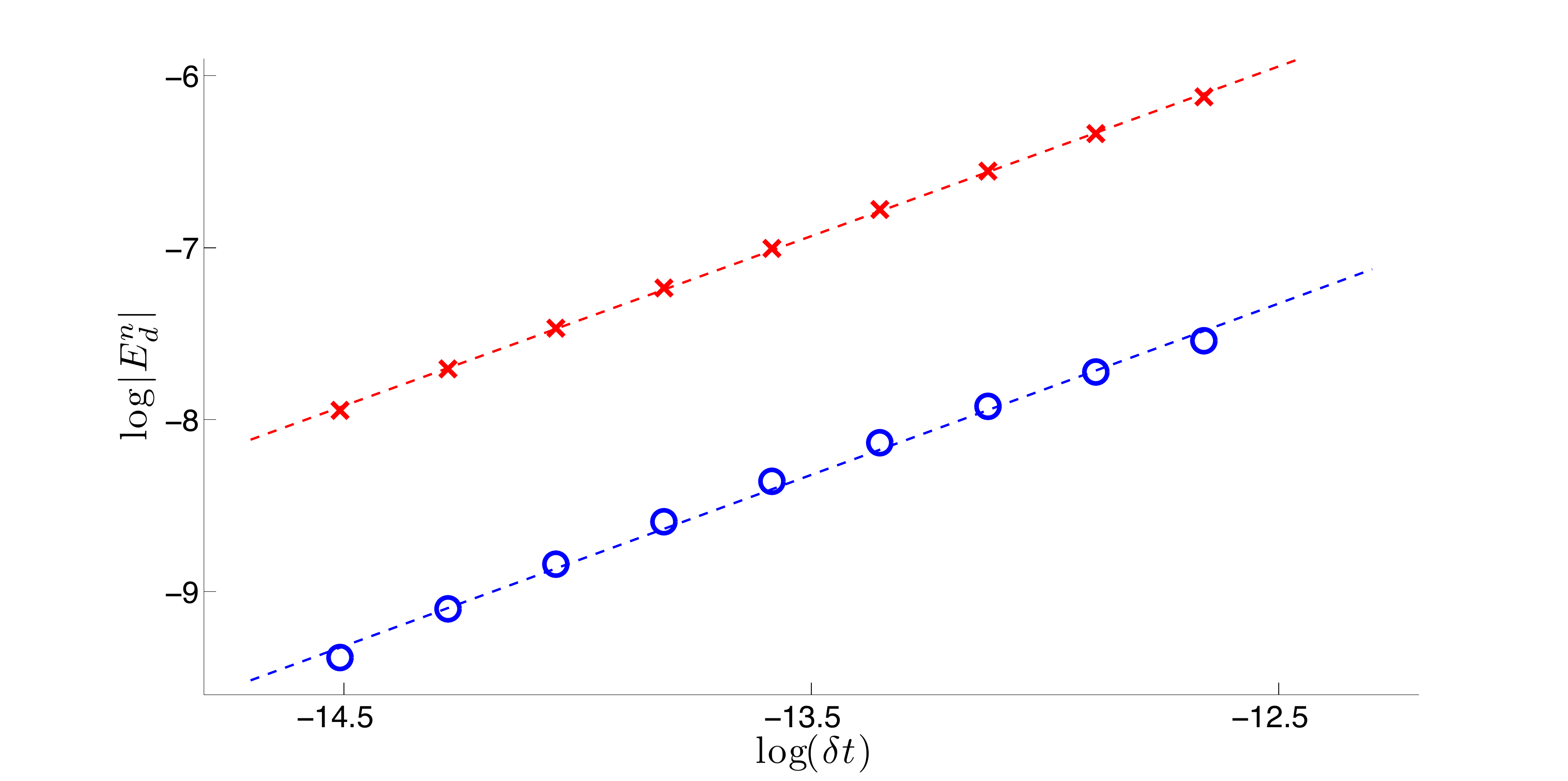}
\caption{Plot of $\log |E_d^n|$ versus $\log (\dt)$ for fixed time of integration $T=0.18$ of the system \eqref{toy_y}-\eqref{toy_x}. The crosses represent results from the PI1 scheme and the circles represent results from the PI2 scheme. The dashed lines are linear regression lines with a slopes of $0.99$ and $1.0$, respectively.} 
\label{dt}
\end{figure}

\noindent
We illustrate the linear scaling of $|E_d^n|$ with the maximal distance $|d^n_{\max}|$ of the fast variable from the approximate slow manifold after a macrostep in Figure~\ref{dn}. We do so by scaling the initial condition for the fast variables, $x^0$. To ensure that the error is not dominated by the initial initialization error $|d^{0}|$, we choose parameters which render the scheme unstable, allowing for divergence of the fast variables from the slow manifold over the macrosteps, i.e. $|d^{n}| > |d^{0}|$. Figure~\ref{dn} confirms clearly the linear dependence of $|E_d^n|$ on $|d^n_{\max}|$. 
We choose $\alpha=1$, and the scale separation parameter $\eps=10^{-4}$. We use $M=100$ microsteps with microstep size $\dt=0.01\eps$, and $n=5$ iterations of each method with macrostep size $\Dt=10^{-3}$. Initial conditions are $y^0 = 1$, $x^0 \in [\sin^2(1)+0.01, \sin^2(1)+1]$. The Lipschitz constants are $L_g=3$ and $L_h = 1$, the bound on the vector field of the slow dynamics is $C_g=2$, and the maximal derivatives of the reduced slow dynamics are $C_2^*=3$ and $C_4^* = 24$.\\

\begin{figure}
\includegraphics[width=\textwidth]{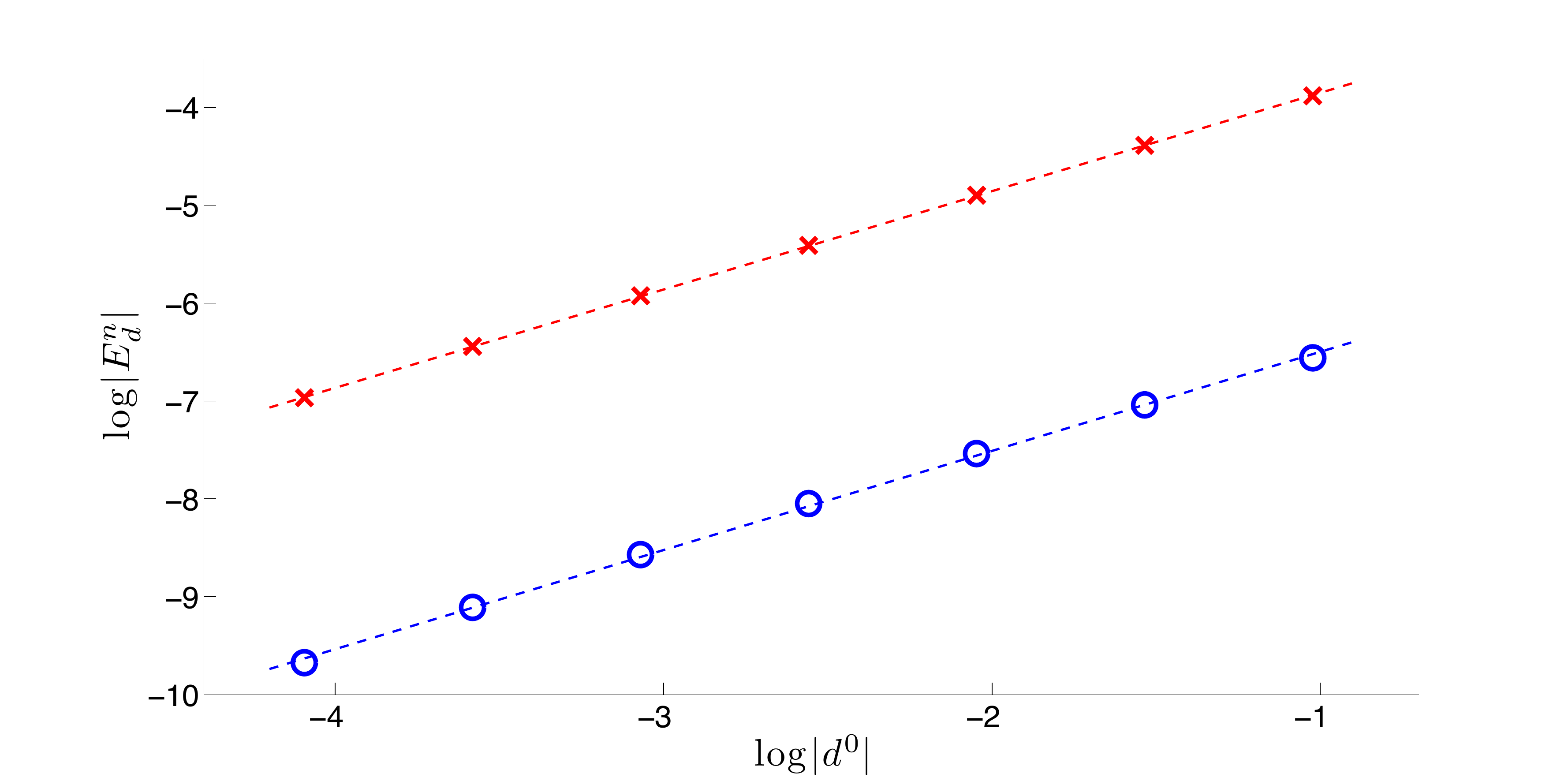}
\caption{Plot of $\log |E_d^n|$ versus $\log |d^0|$ for fixed time of integration $T=0.0056$ of the system \eqref{toy_y}-\eqref{toy_x}. The crosses represent results from the PI1 scheme and the circles represent results from the PI2 scheme. The dashed lines are linear regression lines with slopes of $1.01$.} 
\label{dn}
\end{figure}

\noindent
We investigate how $|d^n_{\max}|$, the maximal deviation of the fast variables from the slow manifold, scales with $\Dt$ in the PI1 and PI2 methods. We choose parameters satisfying $\frac{\lambda \Dt}{\eps}\left(1-\frac{\dt}{\eps}\right)^{aM} \ll 1$, so that the bounds presented for $|d^n|$ in Theorem~\ref{theorem.sec} imply
\begin{align*}
|d^{n}_{\max}| &\le L_h C_g (1+\lambda) \Dt + \mathcal{O}(\Dt^2) \;\;
\end{align*}
for the PI1 method, and 
\begin{align*}
|d^{n}_{\max}| \le &2L_{h'} C_g^2 \td^2 + \mathcal{O}\Big(\Dt^3, \frac{(1-\frac{\dt}{\eps})^{aM}}{a}\Dt \Big) \;\;
\end{align*}
for the PI2 method. As noted in Remark~\ref{PI1.r}, the bound for the PI1 method is not tight for systems with $|d^0|=0$. We therefore choose initial conditions off the slow manifold. Furthermore, to ensure that the initial error $|d^0|$ does not dominate the error $|d^n_{\max}|$, we record $|d^n_{\max}|$ after the first macrostep.
Figure \ref{dDt} clearly shows the linear dependence of $|d^n_{\max}|$ with the macrostep size $\Dt$ for PI1, and the quadratic dependence of $|d^n_{\max}|$ with the macrostep $\td$ for PI2. 
We choose again $\alpha=0.2$, and the scale separation parameter $\eps=10^{-9}$. We use $M=40$ microsteps with microstep size $\dt = 0.4\eps$, while the number of iterations $n$ vary from $20$ to $10^5$ to keep $T=1$ fixed for all values of $\Dt$. Initial conditions are $y^0 = 1$, $x^0 = \sin^2(1)+1$. The Lipschitz constants are $L_g=5$ and $L_h = 1$, the bound on the vector field of the slow dynamics is $C_g=2$, and the maximal derivatives of the reduced slow dynamics are $C_2^*=4$ and $C_4^* = 8$.\\


\begin{figure}
\includegraphics[width=\textwidth]{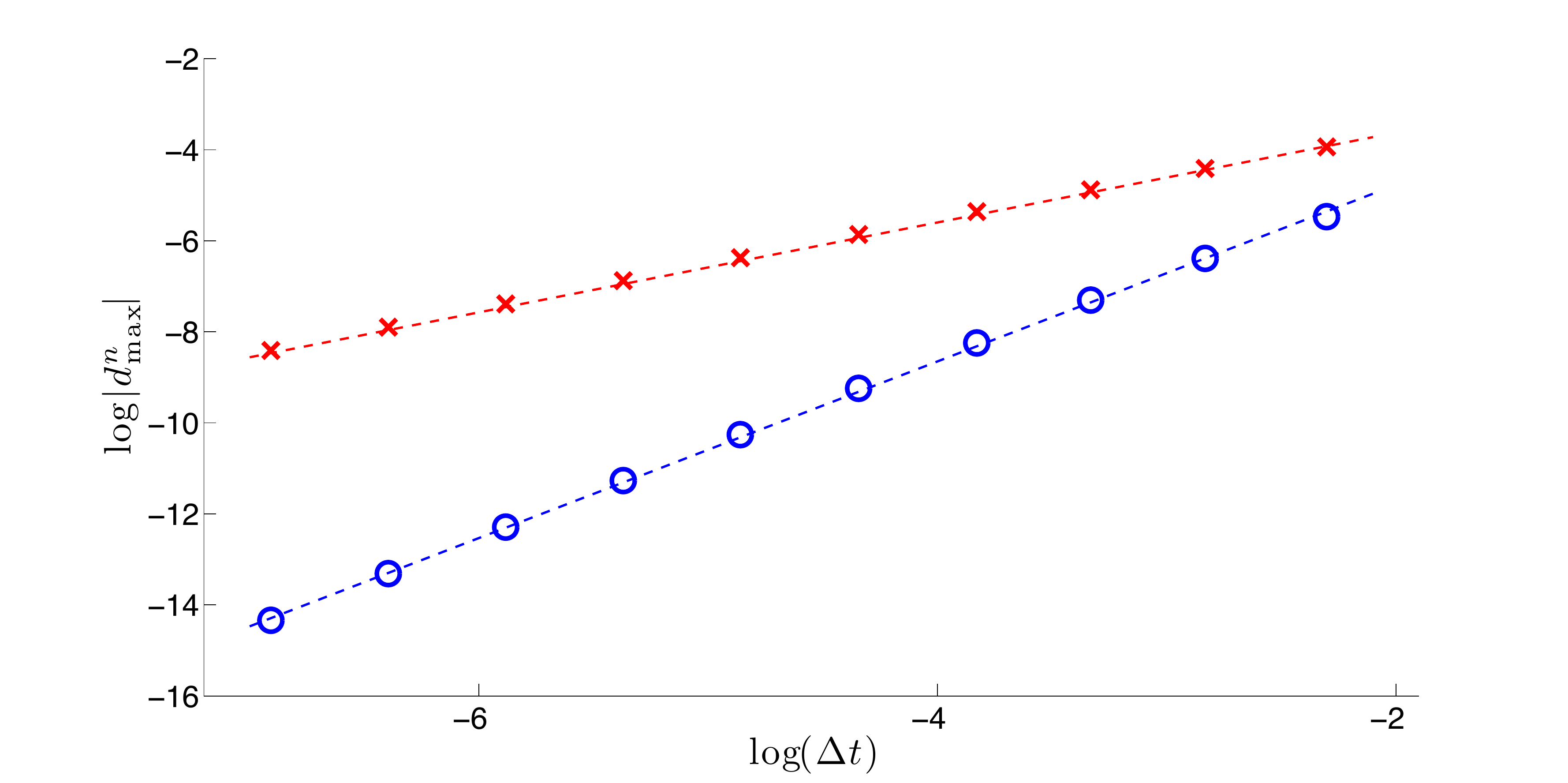}
\caption{Plot of $\log |d^n_{\max}|$ versus $\log (\Dt)$ for fixed time of integration $T=1$ of the system \eqref{toy_y}-\eqref{toy_x}. The crosses represent results from the PI1 scheme and the circles represent results from the PI2 scheme. The dashed lines are linear regression lines with a slopes of $0.98$ and $1.94$, respectively. } 
\label{dDt}
\end{figure}

Finally, we investigate how $\Delta y^{T,\Dt} = |y^{T,\Dt} - y^{T,\Dt/2}|$ scales with the macrostep size $\Dt$ where $y^{T,\Dt}$ are the outputs of the PI1 or PI2 methods with macrostep size $\Dt$ and final time $T$. In \cite{GearLee05} $\Delta y^{T,\Dt}$ was used as a measure of the numerical error.
In Figure~\ref{dy} we show how $\Delta y^{T,\Dt}$ scales with $\Dt$ for a fourth-order Runge-Kutta macrosolver, employing the same numerical parameters as in Figure~\ref{EDt}.  It is seen that $|y^{T,\Dt} - y^{T,\Dt/2}| \sim \td^4$ for all values of $\Dt$ whereas the actual discretization error is dominated by $M\dt$ at the lower values of $\Dt$ (cf. Figure~\ref{EDt}). Hence, such proxies for the numerical error have to be treated with caution when evaluating PI methods.\\

\begin{figure}
\includegraphics[width=\textwidth]{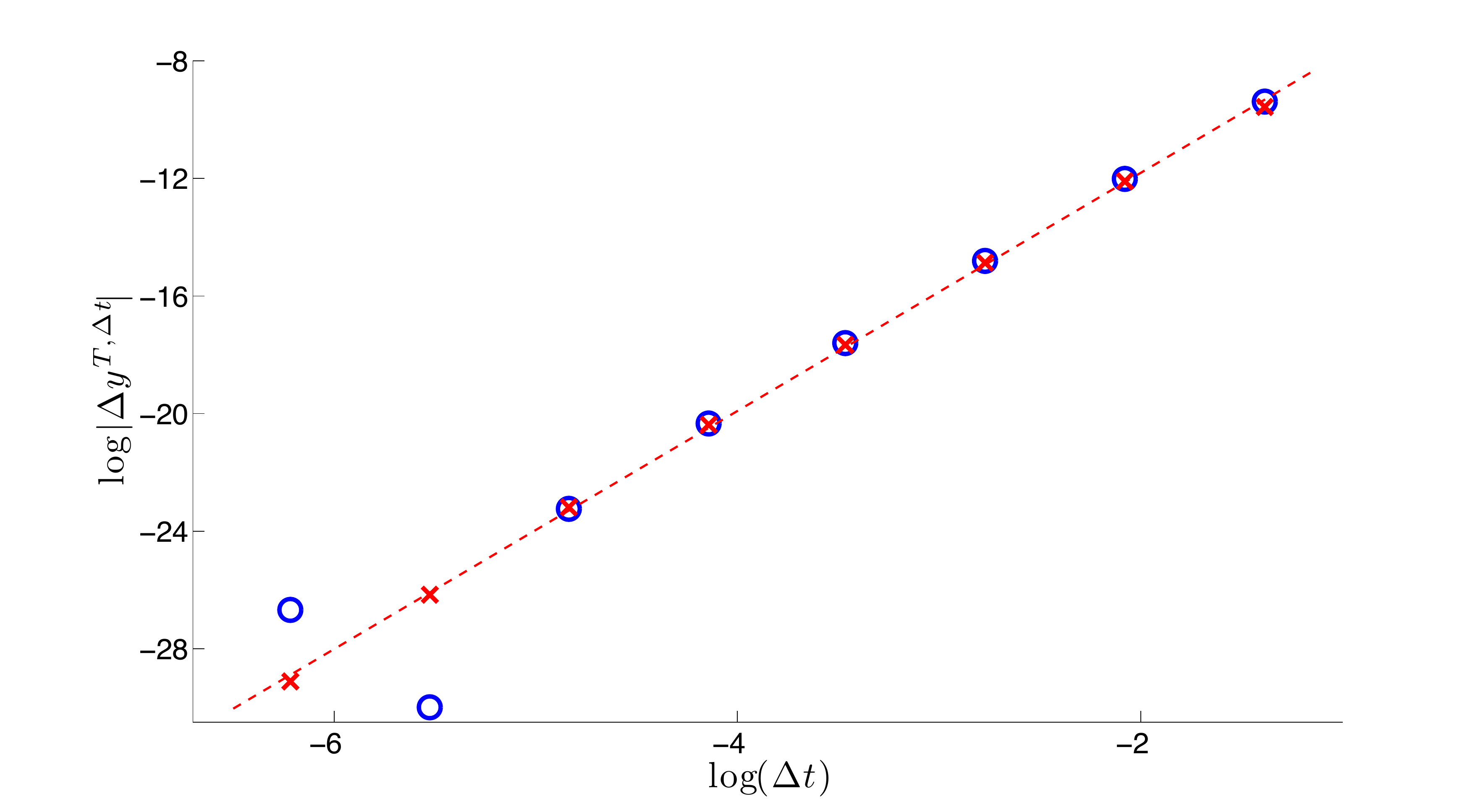}
\caption{Plot of $\log |\Delta y^{T,\Dt}|$ versus $\log |\Dt|$ for fixed time of integration $T=1$ of the system \eqref{toy_y}-\eqref{toy_x}. The crosses represent results from the PI1 scheme and the circles represent results from the PI2 scheme. The dashed line is a linear regression line (of the PI1 output) with slope of $4.05$.} 
\label{dy}
\end{figure}

We comment that the numerical results presented here are robust; in particular, we confirm that identical scalings can be produced from simulations of the Michaelis-Menten system employed in \cite{GearEtAl05}, although its fast dynamics does not satisfy our form \eqref{basefast}, and the Brusselator with rapidly replenished source employed in \cite{GearKevrekidis03}, where the approximate slow manifold is constant.


\section{Discussion}
\label{sec:summary}
We have introduced PI2, a seamless numerical multiscale method with a higher-order macrosolver, which is a slight modification of a standard implementation of a projective integration method, PI1, involving an additional application of the microsolver. \kone{ 
In both PI1 and PI2, each increment is rooted on the slow manifold. In PI1 the increments typically do not end on the slow manifold. In contrast, the additional application of the microsolver assures that in PI2 each increment also ends on the slow manifold, even for slow manifolds with non-vanishing curvature (see Figures~\ref{fig:PI1} and \ref{fig:PI2}). If the slow manifold is sufficiently linear over the course of one macrostep, the increments of PI2 then all lie approximately tangential to it.}\\ 
We presented error bounds for the slow variables for both methods, expressed in Theorem~\ref{theorem.main}. \ktwo{The error bounds are not affected by the order of the microsolver used (though strictly speaking, we only considered explicit microsolver schemes). Hence the contribution of the microsolver to the error constitutes a bottleneck for PI methods, after which the error in the slow variables cannot be improved by adjusting the macrostep size or the order of the macro- or microsolver. Hence} there is no gain to be expected in the slow dynamics when microsolvers other than forward Euler schemes are used. \\
In Theorem~\ref{theorem.sec} we derived bounds for the unphysical deviation of the fast variables from the slow manifold, which may cause numerical instability \cite{EngquistTsai05},  and provided a stability criterion for the macrostep size.\\

\noindent
The Theorems now allow us to compare the PI1 and PI2 schemes. A fair comparison requires that both schemes are operated at the same computational cost. Hence, PI2 utilises less microsteps per application of the microsolver during the construction of the increments as the total number of microsteps is distributed over one more application of the microsolver. Consequently, the absolute discretisation error of PI2 is smaller when compared to PI1. 
Theorem~\ref{theorem.sec} establishes that the PI2 method incurs less deviation from the slow manifold as the deviations scale quadratically with the macrostep size rather than linearly as for PI1. The improved stability can be attributed to the increments of PI2 pointing towards the slow manifold, enforced by the additional relaxation towards the slow manifold when constructing the increments.

\section*{Acknowledgments}
\noindent
Georg Gottwald acknowledges support from the Australian Research Council. John Maclean is supported by a University of Sydney Postgraduate Award.

\section*{References}
\bibliographystyle{elsarticle-num}
\bibliography{bibliography}

\begin{thebibliography}{10}
\expandafter\ifx\csname url\endcsname\relax
  \def\url#1{\texttt{#1}}\fi
\expandafter\ifx\csname urlprefix\endcsname\relax\def\urlprefix{URL }\fi
\expandafter\ifx\csname href\endcsname\relax
  \def\href#1#2{#2} \def\path#1{#1}\fi

\bibitem{KevrekidisGearEtAl03}
I.~G. Kevrekidis, C.~W. Gear, J.~M. Hyman, G.~K. Panagiotis, O.~Runborg,
  C.~Theodoropoulos, Equation-free, coarse-grained multiscale computation:
  Enabling microscopic simulators to perform system-level analysis, Comm. Math.
  Sci. 1~(4) (2003) 715--762.

\bibitem{GearKevrekidis03}
C.~Gear, I.~Kevrekidis, Projective methods for stiff differential equations:
  Problems with gaps in their eigenvalue spectrum, SIAM J. Sci. Comp. 24~(4)
  (2003) 1091--1106.

\bibitem{HummerKevrekidis03}
G.~Hummer, I.~Kevrekidis, Coarse molecular dynamics of a peptide fragment: Free
  energy, kinetics, and long-time dynamics computations, Journal of Chemical
  Physics 118~(23) (2003) 10762--10773.

\bibitem{GearLee05}
S.~L. Lee, C.~W. Gear, Second-order accurate projective integrators for
  multiscale problems, Journal of Computational and Applied Mathematics 201~(1)
  (2007) 258--274.

\bibitem{LustRooseVandekerckhove06}
C.~Vandekerckhove, D.~Roose, K.~Lust, Numerical stability analysis of an
  acceleration scheme for step size constrained time integrators, Journal of
  Computational and Applied Mathematics 200~(2) (2007) 761--777.

\bibitem{GivonEtAl06}
D.~Givon, I.~G. Kevrekidis, R.~Kupferman, Strong convergence of projective
  integration schemes for singularly perturbed stochastic differential systems,
  Comm. Math. Sci. 4~(4) (2006) 707--729.

\bibitem{KevrekidisSamaey09}
I.~Kevrekidis, G.~Samaey, Equation-free multiscale computation: algorithms and
  applications, Ann. Rev. Phys. Chem. 60 (2009) 321--344.

\bibitem{LafitteSamaey10}
P.~Lafitte, G.~Samaey, Asymptotic-preserving projective integration schemes for
  kinetic equations in the diffusion limit, SIAM J. Sci. Comput. 34~(2) (2010)
  A579--A602.

\bibitem{EEngquist03}
W.~E, B.~Engquist, The heterogeneous multiscale methods, Comm. Math. Sci. 1~(1)
  (2003) 87--132.

\bibitem{VandenEijnden03}
E.~Vanden-Eijnden, Numerical techniques for multi-scale dynamical systems with
  stochastic effects, Comm. Math. Sci. 1~(2) (2003) 385--391.

\bibitem{E03}
W.~E, Analysis of the heterogeneous multiscale method for ordinary differential
  equations, Comm. Math. Sci. 1~(3) (2003) 423--436.

\bibitem{EngquistTsai05}
B.~Engquist, Y.-H. Tsai, Heterogeneous multiscale methods for stiff ordinary
  differential equations, Mathematics of Computation 74~(252) (2005)
  1707--1742.

\bibitem{ELiuVandenEijnden05}
W.~E, D.~Liu, E.~Vanden-Eijnden, Analysis of multiscale methods for stochastic
  differential equations, Communications on Pure and Applied Mathematics
  58~(11) (2005) 1544--1585.

\bibitem{EEtAl07}
W.~E, B.~Engquist, X.~Li, W.~Ren, E.~Vanden-Eijnden, Heterogeneous multiscale
  methods: {A} review, Comm. Comp. Phys. 2~(3) (2007) 367--450.

\bibitem{Liu10}
D.~Liu, Analysis of multiscale methods for stochastic dynamical systems with
  multiple time scales, SIAM Multiscale Model. Simul. 8~(3) (2010) 944--964.

\bibitem{VandenEijnden07}
E.~Vanden-Eijnden, On {HMM}-like integrators and projective integration methods
  for systems with multiple time scales, Comm. Math. Sci 5~(2) (2007) 495--505.

\bibitem{EVandenEijnden08}
W.~E, E.~Vanden-Eijnden, Some critical issues for the ``{E}quation-{F}ree"
  approach to multiscale modeling, arXiv:0806.1621v1 [math.NA].

\bibitem{GottwaldMaclean13}
J.~Maclean, G.~A. Gottwald, On convergence of the projective integration method
  for stiff ordinary differential equations, Comm. Math. Sci. 12~(2) (2014)
  235--255.

\bibitem{GearEtAl02}
C.~Gear, I.~G. Kevrekidis, C.~Theodoropoulos, `coarse' integration/bifurcation
  analysis via microscopic simulators: micro-galerkin methods, Computers and
  Chemical Engineering 26 (2002) 941--963.

\bibitem{RooseVandekerckhove06}
C.~Vandekerckhove, D.~Roose, Accuracy analysis of acceleration schemes for
  stiff multiscale problems, Journal of Computational and Applied Mathematics
  211~(2) (2008) 181--200.

\bibitem{LafitteEtAl14}
P.~Lafitte, A.~Lejon, G.~Samaey, A high-order asymptotic-preserving scheme for
  kinetic equations using projective integration, preprint, arXiv:1404.6104v2.

\bibitem{GearEtAl05}
C.~Gear, T.~J. Kaper, I.~G. Kevrekidis, A.~Zagaris, Projecting to a slow
  manifold: Singularly perturbed systems and legacy codes, SIAM J. Appl. Dyn.
  Syst. 4~(3) (2005) 711--732.

\bibitem{ZagarisEtAl09}
A.~Zagaris, C.~Gear, T.~Kaper, I.~Kevrekidis, Analysis of the accuracy and
  convergence of equation-free projection to a slow manifold, Math. Mod. Num.
  Anal. 43~(4) (2009) 757--784.

\bibitem{VandekerckhoveEtAl11}
C.~Vandekerckhove, B.~Sonday, A.~Makeev, D.~Roose, I.~Kevrekidis, A common
  approach to the computation of coarse-scale steady states and to consistent
  initialization on a slow manifold., Computers \& Chemical Engineering 35~(10)
  (2011) 1949--1958.

\bibitem{ZagarisEtAl12}
A.~Zagaris, C.~Vandekerckhove, C.~Gear, T.~Kaper, I.~Kevrekidis, Stability and
  stabilization of the constrained runs schemes for equation-free projection to
  a slow manifold., Discrete and Continuous Dynamical Systems - Series A 32~(8)
  (2012) 2759--2803.

\bibitem{SiettosEtAl12}
C.~I. Siettos, C.~W. Gear, I.~G. Kevrekidis, An equation-free approach to
  agent-based computation: {B}ifurcation analysis and control of stationary
  states, EPL (Europhysics Letters) 99~(4) (2012) 48007.

\bibitem{Iserles}
A.~Iserles, A First Course in the Numerical Analysis of Differential Equations,
  Cambridge University Press, Cambridge, 2009.

\bibitem{Carr}
J.~Carr, {Applications of Centre Manifold Theory}, no.~35 in Applied
  Mathematical Sciences, Springer, 1981.

\bibitem{RobertsWeb}
A.~J. Roberts, Slow manifold of stochastic or deterministic multiscale
  differential equations,
  \url{http://www.maths.adelaide.edu.au/anthony.roberts/sdesm.php} (2008).

\bibitem{Roberts08}
A.~J. Roberts, Normal form transforms separate slow and fast modes in
  stochastic dynamical systems, Physica A 387~(1) (2008) 12--38.

\end{thebibliography}

\end{document}